\theoremstyle{plain}
    \newtheorem{theorem}{Theorem}[section]
    \newtheorem{lemma}[theorem]{Lemma}
    \newtheorem{proposition}[theorem]{Proposition}
    \newtheorem{assumption}[theorem]{Assumption}
 \theoremstyle{definition}
    \newtheorem{definition}[theorem]{Definition}
    \newtheorem{example}[theorem]{Example}
    \newtheorem{remark}[theorem]{Remark}
\theoremstyle{remark}
\numberwithin{equation}{section}
\DeclareMathOperator{\Ad}{Ad}
\DeclareMathOperator{\ind}{index}
\DeclareMathOperator{\End}{End}
\DeclareMathOperator{\Hom}{Hom}
\DeclareMathOperator{\sgn}{sgn}
\DeclareMathOperator{\Spin}{Spin}
\DeclareMathOperator{\U}{U}
 \DeclareMathOperator{\Zeroes}{Zeroes}
  \DeclareMathOperator{\Span}{span}
    \DeclareMathOperator{\Br}{Br}
         \DeclareMathOperator{\supp}{supp}
      \DeclareMathOperator{\DInd}{D-Ind}
\DeclareMathOperator{\Aut}{Aut}
\begin{document}

\newcommand{\Spinc}{\Spin^c}

    \newcommand{\R}{\mathbb{R}}
    \newcommand{\C}{\mathbb{C}} 
    \newcommand{\N}{\mathbb{N}}
    \newcommand{\Z}{\mathbb{Z}} 
    \newcommand{\Q}{\mathbb{Q}}
    \newcommand{\bK}{\mathbb{K}}

\newcommand{\g}{\mathfrak{g}}
\newcommand{\h}{\mathfrak{h}}
\newcommand{\p}{\mathfrak{p}}
\newcommand{\kg}{\mathfrak{g}} 
\newcommand{\kt}{\mathfrak{t}}
\newcommand{\kA}{\mathfrak{A}}
\newcommand{\XX}{\mathfrak{X}}
\newcommand{\kh}{\mathfrak{h}} 
\newcommand{\kp}{\mathfrak{p}}
\newcommand{\kk}{\mathfrak{k}}

\newcommand{\cE}{\mathcal{E}}
\newcommand{\cA}{\mathcal{A}}
\newcommand{\calL}{\mathcal{L}}
\newcommand{\calH}{\mathcal{H}}
\newcommand{\cO}{\mathcal{O}}
\newcommand{\cB}{\mathcal{B}}
\newcommand{\cK}{\mathcal{K}}
\newcommand{\cP}{\mathcal{P}}
\newcommand{\calD}{\mathcal{D}}
\newcommand{\cF}{\mathcal{F}}
\newcommand{\cX}{\mathcal{X}}
\newcommand{\cS}{\mathcal{S}}
\newcommand{\cU}{\mathcal{U}}

\newcommand{\Sj}{ \sum_{j = 1}^{\dim G}}
\newcommand{\Sk}{ \sum_{k = 1}^{\dim M}}
\newcommand{\ii}{\sqrt{-1}}

\newcommand{\ddt}{\left. \frac{d}{dt}\right|_{t=0}}

\newcommand{\cSM}{\cS}
\newcommand{\PM}{P}
\newcommand{\DM}{D}
\newcommand{\LM}{L}
\newcommand{\vM}{v}

\newcommand{\Wedge}{\Lambda}

\newcommand{\specialin}{\hspace{-1mm} \in \hspace{1mm} }

\newcommand{\beq}[1]{\begin{equation} \label{#1}}
\newcommand{\eeq}{\end{equation}}
\newcommand{\bspl}{\[ \begin{split}}
\newcommand{\espl}{\end{split} \]}

\newcommand{\Utilde}{\widetilde{U}}
\newcommand{\Btilde}{\widetilde{B}}
\newcommand{\Dtilde}{\widetilde{D}}
\newcommand{\Etilde}{\widetilde{\cE}}

\newcommand{\Rhat}{\widehat{R}}

\title{An equivariant index for proper actions I}

\author{Peter Hochs\footnote{University of Adelaide, \texttt{peter.hochs@adelaide.edu.au}} \hspace{1mm}and Yanli Song\footnote{Dartmouth College, \texttt{song@math.dartmouth.edu}}}

\date{\today}

\maketitle

\begin{abstract}
Equivariant indices, taking values in group-theoretic objects,  have previously been defined in cases where either the group acting or the orbit space of the action is compact. In this paper, we define an equivariant index without assuming the group or the orbit space to be compact. This allows us to generalise an index of deformed Dirac operators, defined for compact groups by Braverman. In parts II and III of this series, we explore some properties and applications of this index.
%In the case of compact orbit spaces, we show how the index is related to the analytic assembly map from the Baum--Connes conjecture, Kasparov's generalisation of Atiyah's index of transversally elliptic operators, and an index used by Mathai and Zhang. We use the index to define a notion of $K$-homological Dirac induction, and show that, under conditions, it satisfies the quantisation commutes with reduction principle.
\end{abstract}

\tableofcontents

%%%%%%%%%
%%% Intro %%%
%%%%%%%%%

\section{Introduction}

\subsection*{Background}

Equivariant index theory has a long and successful history, with applications in various areas of geometry and representation theory. To set the stage, let $G$ be a Lie group, acting properly on a manifold $M$. Let $\cE = \cE^+ \oplus \cE^-\to M$ be a $G$-equivariant, $\Z_2$-graded, Hermitian vector bundle, and $D$ an odd, self-adjoint, $G$-equivariant elliptic differential operator on $\cE$.
In the basic form of equivariant index theory, one assumes $G$ and $M$ to be compact. Then the kernel of $D$ is finite-dimensional.
Hence one can define the equivariant index of $D$ as
\[
\ind_G D := [\ker D^+] - [\ker D^-] \quad \in R(G).
\]
Here $D^{\pm}$ is the restriction of $D$ to sections of $\cE^{\pm}$, and $R(G)$ is the representation ring of $G$, whose elements are formal differences of isomorphism classes of finite-dimensional representations.

Generalisations of equivariant index theory to noncompact manifolds or groups have been obtained in two distinct directions.
\begin{enumerate}
\item If $M$ and $G$ may be noncompact, but the orbit space $M/G$ is compact (we then call the action \emph{cocompact}), one can apply the \emph{analytic assembly map} introduced by Kasparov \cite{Kasparov83} and used in the Baum--Connes conjecture \cite{Connes94}. This has been studied very intensively in the last few decades.  Successes of this area of index theory include the description of the $K$-theory of group $C^*$-algebras as in the Baum-Connes and Connes--Kasparov conjectures, and applications to the Novikov conjecture. 
Furthermore, Kasparov \cite{Kasparov14} generalised Atiyah's index of transversally elliptic operators to the cocompact case.
 Index formulas for other indices were proved in \cite{Pflaum15, Wang14}. On homogeneous spaces, important results were obtained in \cite{Atiyah77, Connes82}. 
\item If $G$ is compact, then one can often define an equivariant index of a suitable deformation of $D$. For the \emph{trivial} group, some, but by no means all, well-known results on index theory on noncompact manifolds include the ones in \cite{Anghel93, Bruening90, Bruening92a, Bruening92b, Bunke95, Callias78, Elliott96, Gromov83, Kucerovsky01}. For nontrivial compact groups, a natural deformation of Dirac operators poses a technical challenge related to unboundedness of the anticommutator of the Dirac operator and the deformation term. This was solved by Braverman \cite{Braverman02}. The resulting index, including other, equivalent definitions, was used with great success in geometric quantisation, see e.g.\ \cite{HochsSong15, Zhang14, Paradan11}.
\end{enumerate}

The techniques used in these two cases, where $M/G$ or $G$ is compact, are very different. If $M/G$ is compact, then one can apply methods from $K$-theory and $K$-homology of $C^*$-algebras, while if $G$ is compact, then suitable deformations, or assumptions on the behaviour of the operator towards infinity, lead to indices in the completed representation ring
\[
\Rhat(G) = \Hom_{\Z}(R(G), \Z),
\]
which contains infinite direct sums of irreducible representations, with finite multiplicities. (Operator algebraic techniques are used in the treatments of Callias-type operators in \cite{Bunke95, Kucerovsky01} for the trivial group, but those techniques do not apply to the operators we are interested in.) This difference in approaches probably is an important reason why so far, no equivariant index theory has been developed that applies in cases where both $M/G$ and $G$ may be noncompact.
%Up to now, the authors are not aware of any notion of equivariant index theory in cases where both $M/G$ and $G$ may be noncompact. 
This would have the potential for applications in representation theory of noncompact Lie groups, via non-cocompact actions, for example on (co)tangent bundles to homogeneous spaces, or on coadjoint orbits of groups containing $G$.

We should point out that by an equivariant index, we mean an index taking values in an object defined purely in terms of $G$ (such as $R(G)$ or $\Rhat(G)$ if $G$ is compact). For example, the equivariant coarse index (see \cite{Higson93}, among many references), has been shown to be relevant for many problems in the noncompact setting. But because it takes values in the $K$-theory group of the equivariant Roe algebra of $M$, it is not the kind of index we are looking for here. 

Furthermore, in cases where $M/G$ and $G$ are both noncompact, index theory has been developed in terms of $G$-invariant sections \cite{Braverman14, Mathai13}. This contains information about multiplicities of the trivial representation, but in a fundamental way, the techniques used cannot be used to treat nontrivial representations.

\subsection*{The main result}

Our goal in this paper is to develop and apply equivariant index theory for proper actions by possibly noncompact groups, with possibly noncompact orbit spaces. Motivated by work by Kasparov \cite{Kasparov14} and Braverman \cite{Braverman02}, we define the notion of \emph{$G$-Fredholm} operators. For such operators, we define an equivariant index that generalises an index  of transversally elliptic operators defined by Kasparov in the cocompact case, and an index of deformed Dirac operators for actions by compact groups, developed by Braverman. See Table \ref{table intro}. 
\begin{table} \label{table intro}
\begin{tabular}{|c|c|c|}
\hline 
 & \begin{tabular}{l}$M/G$ compact,\\ $D$ transversally elliptic \end{tabular} & \begin{tabular}{l}$M/G$ noncompact, \\ $D$ a deformed Dirac operator \end{tabular}\\
\hline
$G$ compact & Atiyah, 1974 \cite{Atiyah74} & Braverman, 2002 \cite{Braverman02}\\
\hline
$G$ noncompact & Kasparov, 2015 \cite{Kasparov14} & Theorem \ref{thm def Dirac G Fred} \\
\hline
\end{tabular}
\caption{Special cases of the index}
\end{table}
The main result in this paper is that the index we introduce allows us to complete this table, by filling the bottom-right entry, see Theorem \ref{thm def Dirac G Fred}.

In the second part of this series \cite{HochsSong16b}, we study
 some properties and applications of the equivariant index of deformed Dirac operators. These include an induction property, relations with the analytic assembly map and an index used by Mathai and Zhang in \cite{Mathai10}, a notion of Dirac induction (as in the Connes--Kasparov conjecture) based on non-cocompact actions, and a \emph{quantisation commutes with reduction} property.
 
 In the third part \cite{HochsSongDS}, we consider $\Spinc$-Dirac operators. For semisimple Lie groups with discrete series representations, the equivariant index is then directly related to multiplicities of discrete series representations, in cases where the Riemannian metric has a certain product form. Furthermore, the invariant index studied in \cite{Braverman14, Mathai13} can be recovered from the equivariant index. This leads to quantisation commutes with reduction results for this invariant index and an index in terms of multiplicities of discrete series representations, and to Atiyah--Hirzebruch type vanishing results in the cocompact $\Spin$ case.

\subsection*{The $G$-index}

We now give some more technical details of the definition of the index we use. Let $K<G$ be a maximal compact subgroup. Consider the crossed product $C^*$-algebra $C_0(G/K)\rtimes G$. If $M/G$ is compact, and $D$ is transversally elliptic, then Kasparov \cite{Kasparov14} showed that $D$ defines a natural class in the $K$-homology group $KK(C_0(G/K)\rtimes G, \C)$ of $C_0(G/K)\rtimes G$. The algebra $C_0(G/K)\rtimes G$ is Morita-equivalent to the group $C^*$-algebra $C^*K$, so that this index can be viewed as an element of
\[
KK(C^*K, \C) \cong  \Rhat(K).
\]
%This completion of the representation ring contains infinite direct sums of irreducible representations, with finite multiplicities. 
Note that, even though the index can be identified with an element of $\Rhat(K)$, it depends on the action by the whole group $G$. (The identification $KK(C_0(G/K)\rtimes G, \C) \cong \Rhat(K)$ involves an induction procedure from $K$ to $G$.)

On the other hand, suppose that $D$ is a Dirac-type operator. Let $\psi\colon M\to \kg$ (with $\kg$ the Lie algebra of $G$), be an equivariant map. It induces a vector field $v^{\psi}$, which at a point $m\in M$ takes the value
\[
v^{\psi}_m := \ddt \exp(-t\psi(m))\cdot m.
\]
Then we have the \emph{deformed Dirac operator}
\[
D_{\psi} := D - \ii c(v^{\psi}).
\]
Suppose that the set of zeroes of $v^{\psi}$ is cocompact.
If $G=K$ is compact, then Braverman \cite{Braverman02} showed that such an operator has a well-defined equivariant index in $\Rhat(K)$, after rescaling the map $\psi$ by a function with suitable growth behaviour.
In this case, one has the direct equality $C_0(G/K)\rtimes G = C^*K$, and Braverman's index equals a natural class defined by $D_{\psi}$ in $KK(C^*K, \C)$.

Motivated by these two examples, we define an operator to be \emph{$G$-Fredholm} if it defines a class in $KK(C_0(G/K)\rtimes G, \C)$. This class is then its equivariant index,  special cases of  which were summarised in Table \ref{table intro}. In Theorem \ref{thm def Dirac G Fred}, we show that deformed Dirac operators are $G$-Fredholm, so that the bottom-right entry in the table can be filled.
In Proposition \ref{prop indep}, we show that the index of deformed Dirac operators is independent of choices made. 
%In Theorem \ref{thm induction}, we give an explicit description of the image of this index in $\Rhat(K)$.

\subsection*{Overview}

%In Part \ref{part index}, we introduce $G$-Fredholm operators and the equivariant index, and prove that
%deformed Dirac operators are $G$-Fredholm. In Part \ref{part prop}, we study some properties, examples and applications of the equivariant index of such operators.

We start in Section \ref{sec prelim}, by reviewing some background material on $K$-homology and crossed product algebras. Then we define the index and state the main result in Section \ref{sec ind result}. This result is proved in Sections \ref{sec decomp Dirac} and \ref{sec loc est}. 
%Section \ref{sec induction} contains the statement and proof of the induction result. This, along with other techniques, is then used in Section \ref{sec properties} to study properties, examples and applications of the index.
%We list some notation we will use in
%Appendix \ref{app not}. 
%In Appendix \ref{sec DN K Fred}, we give a simpler proof of the main result in the case of compact groups, but for slightly more general operators. This is used in the induction result. 

\subsection*{Acknowledgements}

The authors thank Maxim Braverman, Nigel Higson, Gennadi Kasparov, %, Bram Mesland, Adam Rennie 
 Mathai Varghese and Guoliang Yu for interesting and helpful discussions. The first author was supported by the European Union, through Marie Curie fellowship PIOF-GA-2011-299300.

%%%%%%%%%%%%%
%%% Preliminaries %%%
%%%%%%%%%%%%%

\section{Preliminaries} \label{sec prelim}

We start by reviewing some basic facts about $K$-homology, some $KK$-theory, and crossed product $C^*$-algebras, for the benefit of readers who are not familiar with these topics. Experts should feel free to skip this section, or have a brief look at our notation and conventions.

We mention an isomorphism in $KK$-theory defined by Morita equivalence. This can be used to identify the equivariant index defined in Section \ref{sec G index} with an object that does not involve $K$-homology and $C^*$-algebras.
%These will be used in some of the applications in \cite{HochsSong16b}. 
For details about $K$-homology, we refer to Higson and Roe's book \cite{higson00}. For the more general $KK$-theory, see Chapter VIII of \cite{Blackadar}.

\subsection{Analytic $K$-homology}
Let $A$ be a separable  $C^*$-algebra. A \emph{Kasparov $(A, \C)$-cycle} is a triple $(\calH, F, \pi)$, where
\begin{itemize}
\item $\calH$ is a $\Z_2$-graded, separable Hilbert space;
\item $F\in \cB(\calH)$ is odd with respect to the grading;
\item $\pi: A\to \cB(\calH)$ is a $*$-homomorphism into the even operators,
\end{itemize}
such that for all $a\in A$, the operators
\beq{F condition}
\pi(a)(F^2-1), \quad [F, \pi(a)] \quad \text{and} \quad \pi(a)(F^*-F)
\eeq
 on $\calH$ are compact.
 
 \begin{definition}
\label{equ-2}
A \emph{unitary equivalence} between two Kasparov $(A, \C)$-cycles
$
(\calH, F, \pi)
$ 
and
$
(\calH', F', \pi')
$ 
is an even unitary isomorphism $\calH \cong \calH'$, which intertwines the representations $\pi$ and $\pi'$ of $A$ and the operators $F$ and $F'$. 
\end{definition}
 
\begin{definition}\label{equiv-1}
Consider two Kasparov $(A, \C)$-cycles $(\calH, F_0, \pi)$ and $(\calH, F_1, \pi)$.
Let $[0,1] \to \cB(\calH)$, denoted by $t\mapsto F_t$, be a norm-continuous path of odd operators. Suppose that for all $t\in [0,1]$, the triple $(\calH, F_t, \pi)$ is a Kasparov $(A, \C)$-cycle. Then $(\calH, F_0, \pi)$ and $(\calH, F_1, \pi)$ are \emph{operator homotopic}.
%Let $\pi: A\to \cB(\calH)$ is a $*$-homomorphism, and suppose that
% $(\calH, F_t,\pi)$ is a Kasparov $(A, \C)$-cycle for all $t$. This is an 
% \emph{operator homotopy} between $(\calH, F_0, \pi)$ and $(\calH, F_1, \pi)$. 
\end{definition}

\begin{definition}\label{def K hom}
The \emph{$K$-homology} of $A$ (in even degree) is the Abelian group $KK(A,\C)$ with one generator for every  class of Kasparov $(A, \C)$-cycles with respect to the equivalence relation generated by unitary equivalence and operator homotopy, subject to the relation
\[
[\calH, F, \pi]+
[\calH', F', \pi'] = \bigl[\calH \oplus \calH', F \oplus F', \pi \oplus \pi'\bigr],
\]
for all equivalence classes $[\calH, F, \pi]$ and $[\calH', F', \pi']$ of Kasparov $(A, \C)$-cycles $(\calH, F, \pi)$ and $(\calH', F', \pi')$, respectively.
%of the set of Kasparov $(A, \C)$-cycles by the equivalence relation generated by operator homotopies and unitary equivalences. This is an Abelian group, with  addition given by direct sum, and the zero element represented by the zero Hilbert space, zero representation, and zero operator.  
\end{definition}
If the operators \eqref{F condition} are zero, then $(\calH, F, \pi)$ is called a \emph{degenerate cycle}, and turns out to represent the zero element in $KK(A, \C)$. 

More generally, if $B$ is another $C^*$-algebra (assumed to be $\sigma$-unital to avoid technical difficulties), then one has the notion of a Kasparov $(A, B)$-cycle. These are defined as Kasparov $(A, \C)$-cycles, with the Hilbert space replaced by a right Hilbert $B$-module.
Similarly to Definition \ref{def K hom}, one obtains the Abelian group $KK(A, B)$.
 If there is a group $G$ acting on $A$ and $B$ in a suitable way, there is an equivariant version as well. We will denote $G$-equivariant $KK$-theory and $K$-homology by a superscript $G$. 

There is also an odd version of $KK$-theory, where there is no $\Z_2$-grading. We will write $KK$ or $KK_0$ for even $KK$-theory and $KK_1$ for odd $KK$-theory, and $KK_*$ for the direct sum of the two. 

The  $KK$-group $KK_*(A, B)$ is covariantly functorial in the first entry, and contravariantly functorial in the second.
If $C$ is a third $C^*$-algebra, there is the Kasparov product
\[
KK_*(A, B) \times KK_*(B, C)\xrightarrow{\otimes_B} KK_*(A, C).
\]
It is associative, and functorial in all natural senses. 

\begin{example}
If $A= \C$, then
\[
KK_0(\C, \C) \cong \Z \quad \text{and} \quad  KK_1(\C, \C) \cong 0. 
\]

\end{example}

\begin{example}\label{ex KK D}
Let $A := C_0(M)$, for a smooth Riemannian manifold $M$. Let $D$ be an elliptic, odd, self-adjoint, first order differential operator on a $\Z_2$-graded, Hermitian vector bundle $\cE \to M$. Let $\pi_M\colon C_0(M) \to \cB(L^2(\cE))$ be given by pointwise multiplication. Then %under a completeness assumption, 
the triple
\beq{eq KK D}
\Bigl(L^2(\cE), \frac{D}{\sqrt{D^2+1}}, \pi_M \Bigr)
\eeq
is a Kasparov $(C_0(M), \C)$-cycle. If a group acts on $M$ and $\cE$, preserving all structure and the operator $D$, then this is an equivariant Kasparov cycle. Its class in the $K$-homology of $C_0(M)$ is denoted by $[D]$. 
% If $D$ is a Dirac-type operator then the completeness condition on $M$ is just metric completeness.
\end{example}

%\begin{example}
%Any  $*$-homomorphism $\psi\colon A\to B$  defines a Kasparov $(A, B)$-cycle $(B, 0, \psi)$. If $A=B$, and $\psi$ is the identity map, the corresponding class in $KK(A, A)$ is denoted by $[1_A]$. The Kasparov product with $[1_A]$ is the identity map.
%\end{example}

\begin{lemma}\label{zero element}
Let $(\calH, F, \pi)$ be a Kasparov $(A, \C)$-cycle. Suppose that there exists a self-adjoint, odd  involution $T$ on $\calH$  that  commutes with the action $\pi$ of $A$, and anticommutes with $F$. Then $(\calH, F, \pi)$ represent the zero element in $KK(A, \C)$. 
\end{lemma}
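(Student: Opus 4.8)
The plan is to use $T$ to build an explicit operator homotopy from $(\calH, F, \pi)$ to a degenerate cycle. For $\theta \in [0, \pi/2]$, set
\[
F_\theta := \cos(\theta)\, F + \sin(\theta)\, T \quad \in \cB(\calH).
\]
Since $F$ and $T$ are both odd, each $F_\theta$ is odd, the path $\theta \mapsto F_\theta$ is norm-continuous, and $F_0 = F$, $F_{\pi/2} = T$.

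The first --- and essentially only --- step is to check that $(\calH, F_\theta, \pi)$ is a Kasparov $(A,\C)$-cycle for every $\theta$, and this is exactly where the three hypotheses on $T$ enter. Because $T$ commutes with $\pi$, we get $[F_\theta, \pi(a)] = \cos(\theta)\,[F, \pi(a)]$. Because $T = T^*$, we get $\pi(a)(F_\theta^* - F_\theta) = \cos(\theta)\, \pi(a)(F^* - F)$. Because $T^2 = 1$ and $FT = -TF$,
\[
F_\theta^2 - 1 = \cos^2(\theta) F^2 + \sin^2(\theta)\, T^2 + \cos(\theta)\sin(\theta)(FT + TF) - 1 = \cos^2(\theta)\,(F^2 - 1),
\]
so $\pi(a)(F_\theta^2 - 1) = \cos^2(\theta)\, \pi(a)(F^2 - 1)$. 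Thus the three operators in \eqref{F condition} for $F_\theta$ are scalar multiples of the corresponding operators for $F$, hence compact, and $(\calH, F_\theta, \pi)$ is a Kasparov cycle.

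Consequently $(\calH, F, \pi) = (\calH, F_0, \pi)$ is operator homotopic to $(\calH, F_{\pi/2}, \pi) = (\calH, T, \pi)$. For the latter, $T^2 - 1 = 0$, $[T, \pi(a)] = 0$ and $T^* - T = 0$, so it is degenerate and therefore represents $0$ in $KK(A,\C)$; by homotopy invariance so does $(\calH, F, \pi)$. I do not expect a genuine obstacle here: the argument is forced once one thinks of rotating $F$ towards $T$. The only subtlety is that the naive homotopy $F_\theta = \cos(\theta) F$ fails the $F^2 - 1$ condition (one would be left with the non-compact term $(\cos^2(\theta)-1)\pi(a)$), which is precisely why an auxiliary odd self-adjoint symmetry anticommuting with $F$ must be assumed.
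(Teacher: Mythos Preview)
Your proof is correct and follows essentially the same approach as the paper's own argument: both rotate $F$ towards $T$ via $\cos(\theta)F + \sin(\theta)T$ and observe that the endpoint is degenerate. Your version is in fact more carefully written, since you explicitly verify that each $F_\theta$ satisfies the three Kasparov conditions, whereas the paper's one-line proof leaves this implicit.
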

\begin{proof}
The path $(\calH, F_t, \pi)$ with $F_t = \cos(2\pi t)F + \sin(2\pi t)T$, gives an operator homotopy from $(\calH, F, \pi)$ to a degenerate cycle. 
\end{proof}

\subsection{Crossed product $C^*$-algebras}

Let $A$ be a $C^*$-algebra, and $G$ a locally compact group. Fix a left Haar measure $dg$ on $G$, and  let $\delta_G\colon G \to \R_{\geq 0}$ be the modular function, i.e.\ $d(gg') = \delta_G(g') dg$ for all $g'\in G$.
Suppose there is a homomorphism $G\to \Aut(A)$, continuous with respect to pointwise convergence in norm. We will denote the image of an element $g\in G$ under this map by $g$. The crossed product $A\rtimes G$ is a completion of the $*$-algebra $C_c(G, A)$, with the product and $*$-operation
\[
\begin{split}
(\varphi \varphi')(g) &= \int_G \varphi(g') g' \varphi'(g'^{-1}g)\, dg'; \\
\varphi^*(g) &= \delta_G(g)^{-1} g \varphi(g^{-1})^*,
\end{split}
\]
for $\varphi, \varphi' \in C_c(G, A)$ and $g\in G$. 

The norm in which the completion is taken is defined as follows.
Consider a Hilbert space $\calH$, a unitary representation 
$\pi_G\colon G\to \U(\calH)$ and a $*$-representation
$\pi_A\colon A\to \cB(\calH)$,
such that for all $g\in G$ and $a\in A$,
\[
\pi_G(g) \pi_A(a) \pi_G(g)^* = \pi_A(g a).
\]
These define a $*$-representation
\[
\pi_{G, A}\colon C_c(G, A)\to \cB(\calH),
\]
by 
\beq{eq pi G A}
\pi_{G, A}(\varphi) = \int_G \pi_A(\varphi(g)) \pi_G(g)\, dg,
\eeq
for $\varphi \in C_c(G, A)$. For such $\varphi$, one has
\[
\|\pi_{G, A}(\varphi)\|_{\cB(\calH)} \leq \|\varphi\|_{L^1(G, A)}.
\]
The norm on $A\rtimes G$ is given by
\[
\|\varphi\|_{A\rtimes G} := \sup_{\calH, \pi_G, \pi_A} \|\pi_{G, A}(\varphi)\|_{\cB(\calH)},
\]
where the supremum is taken over $\calH$, $\pi_G$ and $\pi_A$ as above.

If $B$ is another $C^*$-algebra with a continuous action by $G$ as above, and if $\psi\colon A\to B$ is a $G$-equivariant $*$-homomorphism, then we have the induced $*$-homomorphism
\[
\psi_G\colon A\rtimes G \to B\rtimes G,
\]
given by
\beq{eq psi G}
\psi_G(\varphi)(g) = \psi(\varphi(g)),
\eeq
for $\varphi \in C_c(G, A)$ and $g\in G$, and extended continuously. In what follows, we will often work with the dense subalgebra $C_c(G, A)$ (or an even smaller dense subspace), rather than with the complete algebra $A\rtimes G$.

\subsection{Group $C^*$-algebras} \label{sec C*G}

Group $C^*$-algebras are important special cases of crossed products.
\begin{definition}
Let $A = \C$, with the trivial action by $G$. Then $\C \rtimes G$ is the
 \emph{maximal group $C^*$-algebra} $C^*G$ of $G$. It equals  the completion of the convolution algebra $C_c(G)$ with respect to the norm
\[
\|\varphi\|_{\mathrm{max}} := \sup_{\calH, \pi_G} \|\pi_{G}(\varphi)\|_{\cB(\calH)}.
\]
Here the supremum runs over all unitary representations
$\pi_G$ of $G$ in Hilbert spaces $\calH$. For such a representation $\pi_G$, we use the same notation for the $*$-homomorphism $\pi_G\colon C_c(G)\to \cB(\calH)$ given by
\beq{eq piG}
\pi_G(\varphi)v = \int_G \varphi(g) \pi_G(g) v\, dg,
\eeq
for $\varphi \in C_c(G)$ and $v\in \calH$.

The \emph{reduced group $C^*$-algebra $C^*_r G$} is the closure 
\[
\overline{\lambda(C_c(G))} \subset \cB(L^2(G)),
\]
where $\lambda$ denotes the left regular representation of $G$ in $L^2(G)$; i.e.\ for all $\varphi \in C_c(G)$, the operator $\lambda(\varphi)$ is given by convolution by $\varphi$. 
\end{definition}

The $K$-homology group of the group $C^*$-algebra $C^*K$ of a compact group $K$ has a very explicit description. (For compact groups, the maximal and reduced $C^*$-algebras coincide.)
 Let $V$ be any irreducible representation space of $K$. Let $\pi_K\colon C^*K \to \cB(V)$ be given by continuous extension of \eqref{eq piG} (for $G=K$).
%\beq{eq piK}
%\pi_K(\varphi)v = \int_K \varphi(k) k\cdot v\, dk,
%\eeq
%for $\varphi \in C(K) \subset C^*K$ and $v\in V$. (Here $dk$ is the Haar measure giving $K$ unit volume.) 
Consider the grading on $V$ for which all of $V$ is the even part. Then the triple
$
(V, 0, \pi_K)
$
is a Kasparov $(C^*K, \C)$-cycle. This procedure defines an isomorphism of Abelian groups
\beq{eq Khom CK}
KK(C^*K, \C) \cong \Rhat(K).
\eeq
Here 
\[
\Rhat(K)\cong \Hom_{\Z}(R(K), \Z)
\] 
is the completion of the character ring $R(K)$ obtained by allowing infinite linear combinations of irreducible representations, but with finite multiplicities.

The isomorphism \eqref{eq Khom CK} can be described explicitly for more general $K$-homology cycles. 
\begin{lemma} \label{lem Khom CK}
Let $\calH$ be a $\Z_2$-graded, separable Hilbert space, with a unitary representation of $K$. Let $F \in \cB(\calH)$ be an odd, self-adjoint,  $K$-equivariant operator, such that $(\calH, F, \pi_K)$ is a Kasparov $(C^*K, \C)$-cycle. Let $F^{\pm}$ be the restrictions of $F$ to the even and odd parts of $\calH$. Then the representation spaces  $\ker F^{\pm}$ of $K$ define elements of $\Rhat(K)$, and under the isomorphism \eqref{eq Khom CK}, we have
\beq{eq ker F pm}
[\calH, F, \pi_K] = [\ker F^+] - [\ker F^-].
\eeq
\end{lemma}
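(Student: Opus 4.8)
The plan is to reduce the general cycle $(\calH, F, \pi_K)$ to the special cycles $(V, 0, \pi_K)$ used to define the isomorphism \eqref{eq Khom CK}, by stripping away everything except the kernel of $F$. First I would observe that since $K$ is compact, the representation $\calH$ decomposes as a Hilbert-space direct sum of $K$-isotypical components, and the compactness conditions in \eqref{F condition} applied to $a$ ranging over $C^*K$ force $F^2 - 1$ to be compact and $F$ to be self-adjoint modulo compacts; here it is self-adjoint on the nose by hypothesis. Hence $\Real F$ has discrete spectrum away from $0$, and in particular $\ker F = \ker F^2$ is the $0$-eigenspace of the self-adjoint operator $F^2$, which (as $F^2 - 1$ is compact) is finite-dimensional in each $K$-isotypical component. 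This is exactly what it means for $\ker F^{\pm}$ to define elements of $\Rhat(K)$.

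Next I would split $\calH = \ker F \oplus (\ker F)^{\perp}$ as a $\Z_2$-graded $K$-representation; both summands are $F$- and $\pi_K$-invariant, so $(\calH, F, \pi_K) = (\ker F, 0, \pi_K) \oplus \bigl((\ker F)^{\perp}, F|_{(\ker F)^{\perp}}, \pi_K\bigr)$ as a direct sum of Kasparov cycles, and by the additivity relation in Definition \ref{def K hom} the class $[\calH, F, \pi_K]$ is the sum of the two classes. The first summand $(\ker F, 0, \pi_K)$, after further decomposing $\ker F$ into its even and odd isotypical pieces and using additivity once more, maps under \eqref{eq Khom CK} precisely to $[\ker F^+] - [\ker F^-]$ (the odd part contributes with a minus sign because of the grading, exactly as in the construction of the isomorphism). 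So it remains to show that the second summand represents $0$ in $KK(C^*K, \C)$.

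For that I would invoke the standard fact that a Kasparov cycle with invertible $F$ is operator homotopic to a degenerate one: on $(\ker F)^{\perp}$ the operator $F$ is injective self-adjoint with $F^2 - 1$ compact, hence $0$ is not in its spectrum, so $F$ is invertible there. The path $F_t = F(F^2 + t)^{-1/2} + (1-t)\cdot(\text{something})$ — more cleanly, the straight-line homotopy $t \mapsto (1-t)F + t F(|F|)^{-1}$ through invertible self-adjoint operators, or simply $t \mapsto F/\sqrt{F^2 + \epsilon}$ followed by rescaling to the unitary $F/|F|$ — stays within Kasparov cycles (each $F_t$ is invertible self-adjoint, so $F_t^2 - 1$ remains compact and the commutators with $\pi_K(a)$ stay compact) and ends at a cycle with $F_1^2 = 1$ and $F_1 = F_1^*$, i.e.\ a degenerate cycle, which represents $0$. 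I expect the main obstacle to be bookkeeping: making sure the various direct-sum decompositions genuinely respect the $\Z_2$-grading and the $K$-action simultaneously, and checking that the homotopy connecting $F|_{(\ker F)^{\perp}}$ to a symmetry stays inside the class of Kasparov $(C^*K, \C)$-cycles for every $t$ — but since $F^2 - 1$ being compact is preserved under functional calculus by continuous functions that are $\pm 1$ near $\pm\infty$, this is routine. Combining the three pieces gives \eqref{eq ker F pm}.
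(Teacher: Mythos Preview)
Your strategy---split off $\ker F$, then kill the complement via a homotopy to the symmetry $\sgn(F)$---is correct and is the same core idea the paper uses. The execution differs in order of operations: the paper first takes the Kasparov product with $[V]\in KK(\C,C^*K)$ for each irreducible $V$, landing in $KK(\C,\C)\cong\Z$; there $1_V\otimes F$ is genuinely Fredholm (the algebra is now unital, so $(1_V\otimes F)^2-1$ itself is compact), and the kernel/complement splitting together with Lemma~\ref{zero element} applied with $T=1_V\otimes\sgn(F)$ becomes ordinary index theory. Your direct route on all of $\calH$ avoids the Kasparov product but needs one correction: since $C^*K$ is non-unital for infinite $K$, the cycle conditions give only that $\pi_K(a)(F^2-1)$ is compact for each $a$, not that $F^2-1$ is compact, so $F$ need not be invertible on $(\ker F)^\perp$ and your stated justification for the homotopy (``invertible, hence $F_t^2-1$ compact'') is not right. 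The fix is painless---the affine path from $F$ to $\sgn(F)$ stays among Kasparov $(C^*K,\C)$-cycles because $\pi_K(a)(|F|-1)$ is compact whenever $\pi_K(a)(F^2-1)$ is, and $\sgn(F)^2=1$ on $(\ker F)^\perp$ regardless of invertibility---so your approach goes through once this is said correctly.
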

\begin{proof}
Let $V \in \hat K$, and consider the class $[V] \in KK(\C, C^*K)$. Then, since $F$ is $K$-equivariant,  Example 18.3.2(a) in \cite{Blackadar} implies that
\beq{eq V H}
[V] \otimes_{C^*K}[\calH, F, \pi_K] = \bigl[ (V\otimes \calH)^K, 1_V \otimes F, 1 \bigr]\quad \in KK(\C, \C) = \Z,
\eeq
where $1$ denotes scalar multiplication by complex numbers.
Hence the operator
$
(1_V \otimes F)^2 - 1
$
is compact, so that $1_V \otimes F$ is Fredholm. So its kernel is finite-dimensional, and \eqref{eq V H} equals
\beq{eq F V}
\bigl[\ker (1_V \otimes F), 0, 1\bigr] + \bigl[\ker (1_V \otimes F)^{\perp}, 1_V \otimes F, 1\bigr].
\eeq

Define the operator $\sgn(F)$ by functional calculus. On $\ker (1_V \otimes F)^{\perp}$, the operator $1_V\otimes \sgn(F)$ has the properties of the operator $T$ in Lemma \ref{zero element}. (In particular, its square is the identity.) Hence the second term in \eqref{eq F V} is zero. We conclude that
\[
[V] \otimes_{C^*K}[\calH, F, \pi_K] = \bigl[\ker (1_V \otimes F)^K, 0, 1\bigr] = [\ker F^+: V] - [\ker F^-:V].
\]
Therefore, the multiplicity of $V$ in both sides of \eqref{eq ker F pm} is equal.
%\[
%[\calH, F, \pi_K] = \bigoplus_{V \in \hat K} \bigl([V] \otimes_{C^*K}[\calH, F, \pi_K]\bigr)\cdot [V] = [\ker F^+] - [\ker F^-].
%\]
\end{proof}

\subsection{Morita equivalence} \label{sec Morita}

Let $G$ be a locally compact group, with left Haar measure $dg$ and modular function $\delta_G$. Let $K\subset G$ be a closed subgroup, with Haar measure $dk$. We assume $K$ is unimodular for simplicity; later $K$ will always be compact.

The $C^*$-algebra $C_0(G/K)$ has a natural continuous action by $G$, given by
\[
(g \cdot h)(g'K) = h(g^{-1}g'K),
\]
for $g, g'\in G$ and $h \in C_0(G/K)$. The main examples of crossed products we will use are of the form $C_0(G/K)\rtimes G$. This $C^*$-algebra is Morita equivalent to the group $C^*$-algebra $C^*K$ via a Hilbert $C^*K$-module defined as in Situation 10 in \cite{Rieffel82}.
%$\cM$ defined below, with $*$-representation $\pi_{\cM}\colon C_0(G/K)\rtimes G \to \cB(\cM)$. 
This is Green's imprimitivity theorem, see Proposition 3 on page 203 of \cite{Green78}.
% It implies that the triple
%\[
%(\cM, 0, \pi_{\cM}),
%\]
%where all of $\cM$ belongs to the even part, is a Kasparov $(C_0(G/K)\rtimes G, C^*K )$-cycle, and the corresponding class
%\[
%[\cM] \in KK(C_0(G/K)\rtimes G), C^*K)
%\]
%has an inverse 
%\[
%[\cM]^{-1} \in KK(C^*K, C_0(G/K)\rtimes G).
%\]
%This is to say that
%\[
%\begin{split}
%[\cM] \otimes_{C^*K} [\cM]^{-1} &= [1_{C_0(G/K)\rtimes G}] \quad \in KK(C_0(G/K)\rtimes G, C_0(G/K)\rtimes G);\\
%[\cM]^{-1} \otimes_{C_0(G/K)\rtimes G} [\cM] &= [1_{C^*K}] \quad \in KK(C^*K, C^*K).
%\end{split}
%\]
%Hence for any $C^*$-algebra $A$, the Kasparov products with $[\cM]$ from the left and right define isomorphisms of Abelian groups
%\[
%\begin{split}
%KK(C^*K, A) &\cong KK(C_0(G/K)\rtimes G, A);\\
%KK(A, C_0(G/K)\rtimes G) &\cong KK(A, C^*K).
%\end{split}
%\]
%
%Let us define the module $\cM$, as in Situation 10 in \cite{Rieffel82}. As a Hilbert $C^*K$-module, it is the completion of $C_c(G)$ in the $C^*K$-valued inner product given by
%\[
%(f, f')_{C^*K}(k) = \int_{G}\overline{f({g^{-1}})} f'(g^{-1}k)\, dg,
%\]
%for $f, f' \in C_c(G)$ and $k\in K$. The right action by $C^*K$ is given by
%\[
%(f\psi)(g) = \int_K f(kg) \psi(k) \, dk,
%\]
%for $f \in C_c(G)$, $\psi \in C(K)$ and $g\in G$. The representation $\pi_{\cM}$ is given by
%\[
%(\pi_{\cM}(\varphi)f)(g) = \int_G \varphi(g', gK)f(g'^{-1}g) \delta_G(g')^{1/2}\, dg',
%\]
%for $\varphi \in C_c(G, C_0(G,K))$, $f\in C_c(G)$ and $g\in G$. (We will always identify maps from $G$ to $C_0(G/K)$ with functions on $G\times G/K$.)

The isomorphism
\beq{eq iso ME}
KK(C_0(G/K)\rtimes G, \C) \cong KK(C^*K, \C) \cong \Rhat(K)
\eeq
defined by 
Morita equivalence
%the product with $[\cM]$ 
can be described very explicitly. Let $V\in \hat K$, and consider the representation
\beq{eq pi GKG 1}
\pi_{C_0(G/K)\rtimes G}\colon C_0(G/K)\rtimes G \to \cB\bigl( (L^2(G) \otimes V)^K\bigr), 
\eeq
defined by
\[
(\pi_{C_0(G/K)\rtimes G}(\varphi)\sigma)(g) = \int_G \varphi(g, g'K)\delta_G(g')^{1/2} \sigma(g'^{-1}g)\, dg',
\]
for $\varphi \in C_c(G, C_0(G/K))$, $\sigma \in  (L^2(G) \otimes V)^K$, and $g\in G$. 
%Here $\delta_G$ is the modular function on $G$ as before.
 (On pages 131/132 of \cite{Williams07}, it is explained how different powers of the modular function $\delta_G$ can be used.)
\begin{proposition} \label{prop ME L2}
For $V\in \hat K$, the triple
\[
\bigl( (L^2(G) \otimes V)^K, 0, \pi_{C_0(G/K)\rtimes G}\bigr)
\]
is a Kasparov $(C_0(G/K)\rtimes G, \C)$-cycle. The map 
\[
\Rhat(K) \to KK(C_0(G/K)\rtimes G, \C) 
\]
given by $[V] \mapsto \bigl[(L^2(G) \otimes V)^K, 0, \pi_{C_0(G/K)\rtimes G}\bigr]$ is the isomorphism given by Morita equivalence.
%the product with $[\cM]$.
\end{proposition}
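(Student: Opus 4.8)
The strategy is to check directly that the given triple is a Kasparov cycle, and then to identify the resulting map $\Rhat(K)\to KK(C_0(G/K)\rtimes G,\C)$ with the Morita isomorphism by comparing it with the standard description of Green's imprimitivity bimodule.

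First I would verify that $\bigl( (L^2(G)\otimes V)^K, 0, \pi_{C_0(G/K)\rtimes G}\bigr)$ is a Kasparov $(C_0(G/K)\rtimes G,\C)$-cycle. The Hilbert space $(L^2(G)\otimes V)^K$ is separable; we grade it trivially (all even), and $F=0$ is trivially odd and self-adjoint. So the only substantive point is that $\pi_{C_0(G/K)\rtimes G}$ is a well-defined $*$-representation and that the operators in \eqref{F condition} are compact. Since $F=0$, the conditions $\pi(a)(F^2-1)$ and $\pi(a)(F^*-F)$ reduce to requiring that $\pi(a)$ itself is compact for every $a\in C_0(G/K)\rtimes G$, while $[F,\pi(a)]=0$ automatically. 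I would check compactness first on the dense subalgebra $C_c(G,C_0(G/K))$: for $\varphi\in C_c(G,C_0(G/K))$ the operator $\pi_{C_0(G/K)\rtimes G}(\varphi)$ is given by an integral kernel that is compactly supported and continuous in the relevant variables (after accounting for the $K$-invariance), hence Hilbert--Schmidt, hence compact; then density of $C_c(G,C_0(G/K))$ in $C_0(G/K)\rtimes G$ together with norm-continuity of $\pi_{C_0(G/K)\rtimes G}$ and closedness of the compact operators gives compactness of $\pi_{C_0(G/K)\rtimes G}(a)$ for all $a$. Well-definedness of the representation is a standard computation: one checks that the formula for $\pi_{C_0(G/K)\rtimes G}$ preserves the product and $*$-operation of $C_c(G,C_0(G/K))$ and is bounded, using the power $\delta_G(g')^{1/2}$ of the modular function to make the adjoint come out right (this is exactly the point referred to in the remark about \cite{Williams07}); one also checks that $\pi_{C_0(G/K)\rtimes G}(\varphi)$ maps the $K$-invariant subspace to itself, using the $G$-action on $C_0(G/K)$ and the fact that the $K$-action on $L^2(G)\otimes V$ is by right translation on $G$ tensor the given $K$-action on $V$.

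Next I would identify the map $[V]\mapsto \bigl[(L^2(G)\otimes V)^K,0,\pi_{C_0(G/K)\rtimes G}\bigr]$ with the Morita isomorphism \eqref{eq iso ME}. Recall that Morita equivalence is implemented by Green's imprimitivity bimodule $X$, a $(C_0(G/K)\rtimes G)$--$C^*K$ Hilbert bimodule (Situation 10 of \cite{Rieffel82}, Proposition 3 on page 203 of \cite{Green78}), and the induced isomorphism $KK(C^*K,\C)\to KK(C_0(G/K)\rtimes G,\C)$ is given by Kasparov product with the class $[X]\in KK(C_0(G/K)\rtimes G, C^*K)$. So the plan is: start from the generator $[V]=[V,0,\pi_K]$ of $KK(C^*K,\C)\cong\Rhat(K)$ (Lemma \ref{lem Khom CK} and \eqref{eq Khom CK}), form $[X]\otimes_{C^*K}[V,0,\pi_K]$, and show the result is unitarily equivalent to the stated triple. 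Computing a Kasparov product with a degenerate-looking piece ($F=0$ on the $V$ side) reduces, by the standard formula for the product of a bimodule class with a representation, to the interior tensor product module $X\otimes_{C^*K} V$ together with the zero operator and the left $C_0(G/K)\rtimes G$-action. Thus the crux is to produce a $(C_0(G/K)\rtimes G)$-equivariant unitary
\[
X\otimes_{C^*K} V \;\cong\; (L^2(G)\otimes V)^K
\]
intertwining the left module actions. The bimodule $X$ is, concretely, a completion of $C_c(G)$ (or $C_c(G/K,\dots)$) with $C^*K$-valued inner product obtained by integrating over $K$ and left $C_0(G/K)\rtimes G$-action by the convolution-type formula; tensoring with $V$ over $C^*K$ and collapsing the $C^*K$-inner product against the inner product of $V$ produces exactly functions on $G$ valued in $V$, invariant under the diagonal $K$-action, with the $L^2$-inner product — and the left action becomes precisely the formula defining $\pi_{C_0(G/K)\rtimes G}$, including the $\delta_G(g')^{1/2}$ factor, which appears from reconciling the Haar measures on $G$ and $K$ and the modular function conventions in the definition of $X$. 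Once this unitary is in hand, functoriality of the Kasparov product finishes the identification.

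\textbf{Main obstacle.} The genuinely delicate part is the bookkeeping of modular functions and Haar-measure normalisations in matching $X\otimes_{C^*K} V$ with $(L^2(G)\otimes V)^K$ — getting the half-power $\delta_G(g')^{1/2}$ to land in exactly the right place, and checking that the $C^*K$-valued inner product on $X$ collapses correctly against $V$. The Kasparov-cycle axioms themselves are routine (with $F=0$ everything but compactness is automatic, and compactness follows from a Hilbert--Schmidt kernel estimate on a dense subalgebra); the conceptual content is entirely in the explicit form of Green's bimodule and its interaction with the modular function, for which one can lean on \cite{Rieffel82}, \cite{Green78} and \cite{Williams07}. An alternative, if one prefers to avoid computing the Kasparov product explicitly, is to invoke naturality: both the stated map and the Morita isomorphism are isomorphisms $\Rhat(K)\xrightarrow{\sim}KK(C_0(G/K)\rtimes G,\C)$, and it suffices to check they agree on a single generator or, better, that they are compatible with the pairing against $KK(\C, C^*K)$ used in Lemma \ref{lem Khom CK}; pairing the stated cycle with $[W]\in\hat K$ via the Kasparov product and Green's bimodule reduces, by Frobenius reciprocity for the induced representation $\mathrm{Ind}_K^G W$ realised on $(L^2(G)\otimes W)^K$, to computing $\dim\Hom_K(V,W)$, which matches the multiplicity computation on the $\Rhat(K)$ side.
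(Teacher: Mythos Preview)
The paper does not actually prove this proposition: immediately after stating it, the authors write that it is a special case of Proposition 3.11 in the sequel \cite{HochsSong16b}, note that it will not be used in the current paper, and explicitly postpone the proof to \cite{HochsSong16b}. So there is no proof here to compare against.

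Your sketch is a reasonable outline of how such a proof would go, and indeed is along the lines one would expect the deferred argument to take: verifying the Kasparov-cycle axioms with $F=0$ reduces to showing $\pi_{C_0(G/K)\rtimes G}(a)$ is compact for every $a$, and your Hilbert--Schmidt kernel argument on $C_c(G,C_0(G/K))$ is the standard way to do this; the identification with the Morita isomorphism via Green's imprimitivity bimodule $X$ and the computation $X\otimes_{C^*K}V\cong(L^2(G)\otimes V)^K$ is also the natural route. Your remarks about the modular-function bookkeeping being the main technical hazard are apt and match the authors' own pointer to \cite{Williams07}. The alternative you mention at the end (checking agreement by pairing against $[W]\in\hat K$) is slicker but requires knowing in advance that your stated map is an isomorphism, which is precisely what is at stake; so the direct bimodule computation is the honest approach.
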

%\begin{proof}
This is  a special case of Proposition 3.11 in \cite{HochsSong16b}. This fact means that the isomorphism \eqref{eq iso ME} is given by an induction procedure from $K$ to $G$. It will not be used in the current paper, it is only included to make the isomorphism \eqref{eq iso ME} more explicit. We therefore postpone its proof to \cite{HochsSong16b}.

\section{The index and the main result} \label{sec ind result}

Let $G$ be a Lie group, with finitely many connected components. Let $K<G$ be a  maximal compact subgroup. 
We will define the notion of a $G$-Fredholm operator, for proper actions by $G$. Such an operator has an equivariant index in the $K$-homology group of the crossed product $C^*$-algebra $C_0(G/K)\rtimes G$, or, via the isomorphisms of Subsection \ref{sec C*G} and \ref{sec Morita}, in $\Rhat(K)$.

One special case of this index is an index of transversally elliptic operators for cocompact actions studied by Kasparov \cite{Kasparov14}. This in turn generalises Atiyah's index of transversally elliptic operators \cite{Atiyah74} in the compact case. Another special case is Braverman's index of deformed Dirac operators \cite{Braverman02}, for compact groups. The main result in this paper is Theorem \ref{thm def Dirac G Fred}, which generalises Braverman's index to noncompact groups. 

For the rest of this paper, we fix a proper, isometric action by $G$
 on a complete Riemannian manifold $M$. Where convenient, we will use the Riemannian metric to identify $T^*M\cong TM$. We denote the space of vector fields on $M$ by $\cX(M)$.
 Let $\cE = \cE^+ \oplus \cE^- \to M$ be a $\Z_2$-graded, Hermitian vector bundle. Suppose the action by $G$ lifts to $\cE$, preserving the grading and the Hermitian metric.

\subsection{The equivariant index} \label{sec G index}

Since $G$ acts properly on $M$, the differentiable version of Abels' theorem, on page 2 of \cite{Abels}, states that there is a smooth, equivariant map
\[
p\colon M\to G/K.
\]
This defines a $*$-homomorphism
\[
p^*\colon C_0(G/K)\to C_b(M),
\]
which induces
\[
p_G^*\colon C_0(G/K) \rtimes G\to C_b(M) \rtimes G,
\]
as in \eqref{eq psi G}. As in \eqref{eq pi G A}, the $*$-representation of $C_b(M)$ on $L^2(\cE)$ by pointwise multiplication, and the unitary representation by $G$ in $L^2(\cE)$, combine to a $*$-representation
\beq{pi G CbM}
\pi_{G, C_b(M)}\colon C_b(M) \rtimes G\to \cB(L^2(\cE)).
\eeq
The representation
\[
\pi_{G, G/K}^p := \pi_{G, C_b(M)} \circ p^*_G\colon C_0(G/K) \rtimes G \to \cB(L^2(\cE))
\]
is given explicitly by
\[
\bigl(\pi_{G, G/K}^p (\varphi)s\bigr)(m) = \int_G \varphi(g, p(m)) g\cdot (s(g^{-1}m))\, dg,
\]
for $\varphi \in C_c(G, C_0(G/K))$, $s \in L^2(\cE)$ and $m\in M$.

Let $F\in \cB(L^2(\cE))$ be an odd, self-adjoint, equivariant operator.
\begin{definition} \label{def G Fredholm}
The operator $F$ is \emph{$G$-Fredholm for $p$} if %for all smooth, equivariant maps $p\colon M\to G/K$ as above, 
the triple
\beq{eq cycle G ind}
(L^2(\cE), F, \pi^p_{G, G/K})
\eeq
is a Kasparov $(C_0(G/K)\rtimes G, \C)$-cycle. Then the \emph{equivariant index}, or \emph{$G$-index}, for $p$ of $F$ is the class
\[
\ind^p_G(F) \in KK(C_0(G/K)\rtimes G, \C)
\]
 of the triple \eqref{eq cycle G ind}.

If $F$ is $G$-Fredholm for all smooth, equivariant maps $p\colon M\to G/K$, then $F$ is \emph{$G$-Fredholm}.
\end{definition}

The crossed product $C_0(G/K)\rtimes G$ contains the dense subspace $C^{\infty}_c(G) \otimes C^{\infty}_c(G/K)$. For $e\in C^{\infty}_c(G)$ and $h\in C^{\infty}_c(G/K)$, we have
\[
\pi^p_{G, G/K}(e\otimes h) = p^*h\, \pi_G(e).
\]
Here $p^*h \in C^{\infty}_b(M)$ is viewed as a pointwise multiplication operator, and $\pi_G(e)$ is defined by
\beq{eq pi G e}
\pi_G(e)s = \int_G e(g) g\cdot s \, dg,
\eeq
for all $s\in L^2(\cE)$.
So, as an equivalent definition,  $F$ is $G$-Fredholm for $p$ if and only if
 for all $e\in C^{\infty}_c(G)$ and $h\in C^{\infty}_c(G/K)$, the operators
$
p^*h \,  \pi_G(e) (F^2-1)
$ 
and
$
[F, p^*h]
$
on $L^2(\cE)$ are compact.

\begin{lemma}
If $F$ is $G$-Fredholm, then
the class $\ind^p_G(F)$ is independent of $p$.
\end{lemma}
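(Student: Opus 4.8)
The plan is to show that any two smooth equivariant maps $p_0, p_1\colon M\to G/K$ are equivariantly homotopic through such maps, and then to transport this homotopy into an operator homotopy of Kasparov cycles. The first point is standard: $G/K$ is diffeomorphic (equivariantly) to a Euclidean space with a linear $K$-action (or more precisely, $p_0$ and $p_1$ are sections of the same fiber bundle $M\times_G (G/K)\to M/G$ with contractible — indeed convex after choosing a suitable structure — fibers), so the straight-line-type homotopy $p_t$, built fiberwise using the nonpositively curved structure of $G/K$ (geodesic interpolation), is smooth and $G$-equivariant. Write $p_t\colon M\to G/K$ for this path, so $t\mapsto p_t$ is continuous in an appropriate sense and each $p_t$ is smooth and equivariant.

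Next I would check that $t\mapsto \pi^{p_t}_{G,G/K}$ varies continuously in a way compatible with the Kasparov-cycle axioms, keeping the Hilbert space $L^2(\cE)$ and the operator $F$ fixed. Concretely, for $e\in C^\infty_c(G)$ and $h\in C^\infty_c(G/K)$ the operator $\pi^{p_t}_{G,G/K}(e\otimes h) = (p_t^*h)\,\pi_G(e)$; since $p_t^*h$ is a pointwise multiplication operator depending continuously (in sup-norm) on $t$, the family $\pi^{p_t}_{G,G/K}(a)$ is norm-continuous in $t$ for every $a$ in the dense subspace $C^\infty_c(G)\otimes C^\infty_c(G/K)$, hence for all $a\in C_0(G/K)\rtimes G$ by a density/uniform-boundedness argument. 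The hypothesis that $F$ is $G$-Fredholm (for \emph{all} smooth equivariant maps, not just $p_0$ and $p_1$) guarantees that $(L^2(\cE), F, \pi^{p_t}_{G,G/K})$ is a genuine Kasparov cycle for every $t\in[0,1]$, so this really is a homotopy of cycles in the sense needed.

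Finally I would invoke homotopy invariance of $KK$-theory: a homotopy of Kasparov $(A,\C)$-cycles in which the representation of $A$ varies (with $\calH$ and $F$ fixed) still gives equal classes in $KK(A,\C)$ — this is the standard fact that $KK$ is a homotopy functor, where one builds the cycle over $A\otimes C[0,1]$ (equivalently $C([0,1], A\rtimes G)$) with constant Hilbert module $L^2(\cE)\otimes C[0,1]$ and constant operator $F$, whose evaluations at $t=0$ and $t=1$ are the two given cycles. Therefore $\ind^{p_0}_G(F) = \ind^{p_1}_G(F)$.

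The main obstacle I expect is a bookkeeping point rather than a conceptual one: making precise the sense in which $t\mapsto p_t$, and hence $t\mapsto \pi^{p_t}_{G,G/K}$, is ``continuous,'' and verifying that the axioms \eqref{F condition} hold uniformly enough along the path that one genuinely obtains a cycle over $C([0,1], C_0(G/K)\rtimes G)$. Since $F$ is assumed $G$-Fredholm for every smooth equivariant $p$, the pointwise-in-$t$ cycle conditions come for free; the only real work is the norm-continuity of $t\mapsto (p_t^*h)\,\pi_G(e)$, which follows from continuity of $t\mapsto p_t$ into $C(M)$ in the topology of uniform convergence on compact sets together with the support properties of $e$ and $h$ and properness of the action. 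I would also want to confirm that the geodesic homotopy $p_t$ is jointly smooth in $(t,m)$, which is immediate from smoothness of the exponential map on the symmetric space $G/K$.
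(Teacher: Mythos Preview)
Your proposal is correct and follows essentially the same route as the paper: use equivariant contractibility of $G/K$ to obtain a homotopy $p_t$ of equivariant maps, then assemble the family $\bigl(L^2(\cE),F,\pi^{p_t}_{G,G/K}\bigr)$ into a standard homotopy of Kasparov cycles (a class in $KK\bigl(C_0(G/K)\rtimes G, C[0,1]\bigr)$, as in Blackadar's Definition~17.2.2), noting that the $G$-Fredholm hypothesis guarantees the cycle conditions at every $t$. The paper is terser about the construction of $p_t$ and the continuity of $t\mapsto \pi^{p_t}_{G,G/K}$, but the argument is the same; your extra care about geodesic interpolation and norm-continuity of $t\mapsto (p_t^*h)\pi_G(e)$ just fills in details the paper leaves implicit.
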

\begin{proof}
For $j \in\{0,1\}$, let $p_j\colon M\to G/K$ be  smooth, equivariant maps. Since $G/K$ is $G$-equivariantly contractible, there is a $G$-equivariant homotopy
\[
(p_t\colon M\to G/K)_{t\in [0,1]}
\]
connecting $p_0$ to $p_1$. (The space $G/K$ is a universal example of proper $G$-actions \cite{Connes94}.)
%Then for all $\varphi \in C_0(G/K)\rtimes G$, the operators $\pi^{p_t}_{G, G/K}(\varphi)$ depend continuously on $t$ in the operator norm. It follows from Definition \ref{equiv-1}. 
Set $\tilde \calH := C([0,1], L^2(\cE))$, and define $\tilde F \in \cB(\tilde \calH)$ by applying $F$ after evaluating at a point in $[0,1]$. Define
\[
\tilde \pi_{G, G/K}\colon C_0(G/K) \rtimes G\to \cB(\tilde \calH)
\]
by
\[
(\tilde \pi_{G, G/K} (\varphi) \tilde s) (t) := (\pi^{p_t}_{G, G/K}(\varphi) \tilde s)(t),
\]
for $\varphi \in C_0(G/K) \rtimes G$, $\tilde s \in \tilde \calH$ and $t\in [0,1]$.
Then, the triple $(\tilde \calH, \tilde F, \tilde \pi_{G, G/K})$ defines a homotopy class (a `standard' homotopy in the sense of Definition 17.2.2 in \cite{Blackadar})
\[
[\tilde \calH, \tilde F, \tilde \pi_{G, G/K}] \in KK\bigl(C_0(G/K)\rtimes G, C([0,1]) \bigr),
\]
so
\[
\bigl[L^2(\cE), F, \pi^{p_0}_{G, M} \bigr]= \bigl[L^2(\cE), F, \pi^{p_1}_{G, M} \bigr]
\quad  \in KK(C_0(G/K)\rtimes G, \C).
\]
\end{proof}

Because of this lemma, the following definition makes sense.
\begin{definition}
If $F\in \cB(L^2(\cE))$ is a $G$-Fredholm operator, then its \emph{$G$-index} is the class
\[
\ind_G(F) := \ind_G^p(F) \in KK(C_0(G/K)\rtimes G, \C),
\]
for any smooth, equivariant map $p\colon M\to G/K$.
\end{definition}
 From now on, we will also write $\pi_{G, G/K} := \pi^p_{G, G/K}$ when a map $p$ as above is given, and there is no danger of confusion.

Via the Morita equivalence isomorphism of Subsection \ref{sec Morita}, we can identify the $G$-index of a $G$-Fredholm operator $F$ with an element of $\Rhat(K)$.
%\[
%[\cM]^{-1}\otimes_{C_0(G/K)\rtimes G} \ind_G(F) \in KK(C^*K, \C) = \Rhat(K).
%\]
Furthermore, 
if $G/K$  has an equivariant  $\Spin$ structure, we can use the Dirac induction isomorphism
\[
\DInd_K^G\colon R(K) \xrightarrow{\cong} K_*(C^*_rG)
\]
from the Connes--Kasparov conjecture (see (4.20) in \cite{Connes94})
and the universal coefficient theorem to identify
\[
\Rhat(K) = \Hom_{\Z}(R(K), \Z) =  \Hom_{\Z}(K_*(C^*_rG), \Z) = KK(C^*_rG, \C).
\]
In that way, the $G$-index takes values in the $K$-homology of $C^*_rG$.
% to identify the latter class with
%\[
%\bigl((\DInd_K^G)^*\bigr)^{-1} \bigl( [\cM]^{-1}\otimes_{C_0(G/K)\rtimes G} \ind_G(F) \bigr) \in KK(C^*_rG, \C).
%\]
These identifications will be useful in some of the applications in \cite{HochsSong16b}.
Note that, while the $G$-index can be identified with an element of $\Rhat(K)$, it depends on the action by the whole group $G$. This is apparent from the results in \cite{HochsSong16b}, where, for example,  the $G$-index of certain operators is related to discrete series representations of semisimple groups that have such representations.

One special case of the $G$-index is an index of transversally elliptic operators for cocompact actions studied by Kasparov.
\begin{theorem}[Kasparov]
Suppose $M/G$ is compact, and let $F$ be a properly supported, odd, self-adjoint, equivariant pseudo-differential operator on $\cE$ of order zero, which is transversally elliptic in the sense of Definition 6.1 in \cite{Kasparov14}. Then $F$ is $G$-Fredholm.
\end{theorem}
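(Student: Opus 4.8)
The plan is to show that for a transversally elliptic pseudodifferential operator $F$ in the sense of Kasparov on a cocompact $G$-manifold, the triple $(L^2(\cE), F, \pi^p_{G,G/K})$ satisfies the three compactness conditions in \eqref{F condition}. The key observation is that, when $M/G$ is compact, the representation $\pi^p_{G,G/K}$ factors through the natural representation of $C_b(M)\rtimes G$ on $L^2(\cE)$ precomposed with $p^*_G$, and — because the pullback map $p^*$ has image in $C_b(M)$ while only the crossed product with a cocompact action is involved — one can reduce the compactness statements to the corresponding statements in Kasparov's framework in \cite{Kasparov14}. Concretely, it suffices (by the remark just before this theorem) to check that $p^*h\,\pi_G(e)(F^2-1)$ and $[F, p^*h]$ are compact for all $e\in C^\infty_c(G)$ and $h\in C^\infty_c(G/K)$.

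The main steps I would carry out are as follows. First, since the action is proper and cocompact, I would choose a cutoff function $c\in C^\infty_c(M)$ with $\int_G c(g^{-1}m)^2\,dg = 1$ for all $m$, and use it to relate $\pi_G(e)$ and the pointwise multiplication operators $p^*h$ to the operators appearing in Kasparov's definition. The point is that $p^*h\cdot c$ is compactly supported on $M$, so standard local elliptic estimates for the properly supported pseudodifferential operator $F$ of order zero give that $(p^*h)\,c\,(F^2-1)$ — and more generally $(p^*h)\,c\,g\cdot(F^2-1)$ for $g$ ranging over the (compact) support of $e$ — are compact, uniformly in $g$. Integrating against $e(g)\,dg$ and using that the cutoff $c$ together with the $G$-action reassembles $\pi_G(e)$, one obtains that $p^*h\,\pi_G(e)(F^2-1)$ is compact; transverse ellipticity of $F$ is exactly what makes $F^2-1$ "elliptic in the $G$-directions that are killed by the cutoff," so that this works. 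Second, for the commutator $[F, p^*h]$: since $F$ is a pseudodifferential operator of order zero, $[F, p^*h]$ is a pseudodifferential operator of order $-1$, hence locally compact; multiplying by the cutoff $c$ and again using cocompactness to pass between local compactness and genuine compactness gives that $[F, p^*h]$ is compact. Third, the self-adjointness condition $\pi(a)(F^*-F)$ is automatic since $F$ is assumed self-adjoint.

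The cleanest route, and the one I would actually pursue, is to invoke directly the theorem of Kasparov (Definition 6.1 and the surrounding discussion in \cite{Kasparov14}) stating that such an $F$ defines a class in $KK(C_0(G/K)\rtimes G, \C)$: that is precisely the assertion that \eqref{eq cycle G ind} is a Kasparov cycle. In other words, the content of this theorem is essentially a restatement of Kasparov's result once one unwinds that Kasparov's $K$-homology class of a $G$-transversally elliptic operator on a cocompact manifold is represented on $L^2(\cE)$ with the representation $\pi^p_{G,G/K}$ — which is the whole purpose of the map $p$. So the proof amounts to: (i) recalling Kasparov's construction, (ii) identifying the representation of $C_0(G/K)\rtimes G$ used there with our $\pi^p_{G,G/K}$, and (iii) noting that operator homotopy and equivalence of the two $K$-homology classes follows.

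The main obstacle is step (ii): carefully matching conventions between Kasparov's formulation — where the $G/K$-direction enters through a clutching/Morita picture — and the concrete formula for $\pi^p_{G,G/K}$ given above, including getting the powers of the modular function $\delta_G$ right (as flagged in the reference to \cite{Williams07}). One must also confirm that the properly supported, order-zero, self-adjoint normalization we have imposed on $F$ is compatible with (or can be achieved from) the operator Kasparov works with, via an operator homotopy that stays within Kasparov cycles; this uses that any two symmetric choices of such a zeroth-order operator with the same principal symbol differ by a compact perturbation on compact subsets, which combined with cocompactness is harmless. Once these bookkeeping points are settled, the compactness of the operators in \eqref{F condition} is immediate from Kasparov's theorem, and $F$ is $G$-Fredholm for every smooth equivariant $p$, hence $G$-Fredholm.
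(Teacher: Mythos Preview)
Your proposal is correct and takes essentially the same approach as the paper: the paper's entire proof is the single sentence ``See Proposition 6.4 and Remark 8.19 in \cite{Kasparov14},'' which is precisely your ``cleanest route'' of invoking Kasparov's result directly. Your additional discussion of cutoff functions and convention-matching is more than the paper provides, but it is not needed here and does not conflict with the paper's treatment.
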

\begin{proof}
See Proposition 6.4 and Remark 8.19 in \cite{Kasparov14}.
\end{proof}

In the setting of this result, the $G$-index of $F$ is the index defined by Kasparov in Remark 8.19 in \cite{Kasparov14}. If $M$ and $G$ are compact, this reduces to Atiyah's index of transversally elliptic operators \cite{Atiyah74}.

\subsection{Differential operators}

In the setting of Subsection \ref{sec G index}, let $D$ be an elliptic, self-adjoint, odd, equivariant, first order differential operator on $\cE$. 
Let $\sigma_D$ be its principal symbol.
Set
\[
F:=\frac{D}{\sqrt{D^2+1}} \quad \in \cB(L^2(\cE)).
\]
We will use the following criterion for $F$ to be $G$-Fredholm in the proof of our main result, Theorem \ref{thm def Dirac G Fred}.
\begin{lemma} \label{lem diff op G Fred}
Suppose that for a smooth, equivariant map $p\colon M\to G/K$, and all $e\in C^{\infty}_c(G)$ and $h\in C^{\infty}_c(G/K)$, the operator
\beq{eq diff op G Fred}
(D^2+1)^{-1}\pi_G(e) p^*h
\eeq
on $L^2(\cE)$ is compact. Then $F$ is $G$-Fredholm for $p$.
\end{lemma}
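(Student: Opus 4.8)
The plan is to verify the two conditions in the equivalent characterization of $G$-Fredholmness given just before the statement: for all $e\in C^\infty_c(G)$ and $h\in C^\infty_c(G/K)$, the operators $p^*h\,\pi_G(e)(F^2-1)$ and $[F,p^*h]$ on $L^2(\cE)$ must be compact. Since $F$ is already self-adjoint, the third operator $\pi(a)(F^*-F)$ in \eqref{F condition} vanishes identically, so nothing is needed there.

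For the first condition, I would compute $F^2-1 = \frac{D^2}{D^2+1}-1 = -(D^2+1)^{-1}$. Thus $p^*h\,\pi_G(e)(F^2-1) = -p^*h\,\pi_G(e)(D^2+1)^{-1}$. This is not quite the operator \eqref{eq diff op G Fred} in the hypothesis, but it is its adjoint: since $D$ is self-adjoint, $(D^2+1)^{-1}$ is self-adjoint, and $\pi_G(e)^* = \pi_G(e^*)$ where $e^*(g) = \delta_G(g)^{-1}\overline{e(g^{-1})}$ lies again in $C^\infty_c(G)$, while $(p^*h)^* = p^*\bar h$ is multiplication by a function again in $C^\infty_c(G/K)\circ p$. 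Hence $\bigl(p^*h\,\pi_G(e)(D^2+1)^{-1}\bigr)^* = (D^2+1)^{-1}\pi_G(e^*)\,p^*\bar h$, which is compact by hypothesis (applied to $e^*$ and $\bar h$). Since the adjoint of a compact operator is compact, $p^*h\,\pi_G(e)(F^2-1)$ is compact. (One should also note $F^2-1$ is bounded, so all products here make sense as bounded operators.)

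For the second condition, I would reduce $[F,p^*h]$ to something controlled by $[D,p^*h]$, which is a bounded operator: indeed $[D,p^*h] = c(d(p^*h))$, the Clifford-type zeroth order operator given by the principal symbol $\sigma_D$ applied to the (compactly supported, smooth) one-form $d(p^*h)$, so it is a bounded multiplication-type operator with support in the compact set $\supp(p^*h)$. The standard trick is the integral formula $F = \frac{D}{\sqrt{D^2+1}} = \frac{1}{\pi}\int_0^\infty (D^2+1+\lambda)^{-1}D\,\lambda^{-1/2}\,d\lambda$. Commuting $p^*h$ past this expression and using $[(D^2+1+\lambda)^{-1}, p^*h] = -(D^2+1+\lambda)^{-1}[D^2,p^*h](D^2+1+\lambda)^{-1}$, together with $[D^2,p^*h] = D[D,p^*h]+[D,p^*h]D$, one expresses $[F,p^*h]$ as a norm-convergent integral whose integrand involves resolvents, $D(D^2+1+\lambda)^{-1}$ (uniformly bounded), and the bounded operator $[D,p^*h]$. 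This shows $[F,p^*h]$ is bounded; more importantly, rewriting each term so that a factor $(D^2+1)^{-1}$ appears adjacent to the compactly supported $[D,p^*h]$ (hence adjacent to a compactly supported multiplication operator, after inserting a cutoff $\chi\in C^\infty_c(M)$ equal to $1$ near $\supp h\circ p$ so that $[D,p^*h] = \chi[D,p^*h]$ and $\chi = \chi\, p^*h'$ for a suitable $h'$), lets me invoke the hypothesis that $(D^2+1)^{-1}\pi_G(e)p^*h'$ — or rather $(D^2+1)^{-1}p^*h'$ directly, which is the $e = $ (approximate identity) case, or simply a local-compactness statement — is compact. The cleanest route: since the action is proper and $\supp(d(p^*h))$ is compact, the operator $(D^2+1)^{-1}$ times a compactly supported bundle endomorphism is compact by local elliptic regularity (Rellich), and the hypothesis is tailored to exactly guarantee this locally; then the integral formula expresses $[F,p^*h]$ as a norm-limit of compacts, hence compact.

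The main obstacle I anticipate is the $[F,p^*h]$ estimate: controlling the integral $\frac{1}{\pi}\int_0^\infty \lambda^{-1/2}d\lambda$ near $\lambda = 0$ and $\lambda = \infty$ after inserting the commutator terms, since a naive bound on $(D^2+1+\lambda)^{-1}[D^2,p^*h](D^2+1+\lambda)^{-1}D$ only gives $O(\lambda^{-1/2})$ which is not integrable at infinity — one needs to exploit the extra resolvent decay and the self-adjointness of $D$ (splitting $[D^2,p^*h]$ symmetrically and pairing each $D$ with a resolvent) to get an integrable $O(\lambda^{-3/2})$ tail, and to argue that each truncated integral is compact with the truncations converging in operator norm. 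Everything else — the adjoint manipulation for $F^2-1$, the identification of $[D,p^*h]$ with $c(d(p^*h))$, and the insertion of cutoffs to land in the hypothesis — is routine.
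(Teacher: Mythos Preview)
Your treatment of the $F^2-1$ term is correct and essentially matches the paper: $F^2-1=-(D^2+1)^{-1}$, and compactness of $p^*h\,\pi_G(e)(D^2+1)^{-1}$ follows from the hypothesis applied to $e^*$ and $\bar h$ by taking adjoints.

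The commutator argument, however, has a real gap. You repeatedly assert that $\supp(p^*h)$ and $\supp(d(p^*h))$ are compact, and then invoke Rellich. This is false: using $M\cong G\times_K N$, one has $p^*h([g,n])=h(gK)$, so $\supp(p^*h)\cong q^{-1}(\supp h)\times_K N$ with $q\colon G\to G/K$. This set is $G$-cocompact but \emph{not} compact when $N$ (equivalently $M/G$) is noncompact, which is precisely the case of interest. Consequently $(D^2+1)^{-1}c(d(p^*h))$ need not be compact, and your attempt to insert a compactly supported cutoff $\chi$ with $[D,p^*h]=\chi[D,p^*h]$ cannot succeed. Your fallback of taking $e$ to be an approximate identity in the hypothesis does not help either: strong limits of compacts are not compact. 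In fact, without the factor $\pi_G(e)$ the operator $[F,p^*h]$ is generally \emph{not} compact; what one actually needs for the Kasparov cycle is compactness of $[F,\pi^p_{G,G/K}(e\otimes h)]=[F,p^*h]\pi_G(e)$.

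The paper avoids the integral-formula computation entirely by invoking Baaj--Julg's unbounded picture \cite{Baaj83}: one checks that for $a=e\otimes h$ in the dense subalgebra $C^\infty_c(G)\otimes C^\infty_c(G/K)$, the resolvent condition $(D^2+1)^{-1}\pi(a)$ compact holds (this is the adjoint of the hypothesis), and that $[D,\pi(a)]=[D,p^*h]\pi_G(e)=\sigma_D(p^*dh)\pi_G(e)$ is bounded. Baaj--Julg's Proposition~2.2 then gives that the bounded transform $F$ defines a Kasparov $(C_0(G/K)\rtimes G,\C)$-cycle. If you want to proceed by hand with the integral formula, you can, but you must keep the $\pi_G(e)$ factor throughout and show compactness of the integrand using the hypothesis on $(D^2+1)^{-1}\pi_G(e)p^*h$ rather than any compact-support claim; this is essentially reproving Baaj--Julg in this special case.
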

\begin{proof}
This follows from Baaj and Julg's description of unbounded $KK$-theory in \cite{Baaj83}. Indeed, the operator \eqref{eq diff op G Fred} is compact if and only if its adjoint
\[
p^*\bar h\,  \pi_G(e^*) (D^2+1)^{-1} = p^*\bar  h\,  \pi_G(e^*) (F^2-1)
\]
is. Hence the second condition in Definition 2.1 in \cite{Baaj83} holds. Furthermore, the commutator
\[
[D, p^*h\,  \pi_G(e)]
\]
equals
\[
\sigma_D(p^*dh) \pi_G(e).
\]
This operator is bounded, so $D$ satisfies all conditions in Definition 2.1 in \cite{Baaj83}. The claim therefore follows from Proposition 2.2 in \cite{Baaj83}.
\end{proof}

The operator $D$ is unbounded, so it is not the kind of operator to which Definition \ref{def G Fredholm} applies. But we will say that $D$ is $G$-Fredholm, or $G$-Fredholm for $p$, if the operator $F$ has the respective property. Then the condition in 
Lemma \ref{lem diff op G Fred} is sufficient for $D$ to be $G$-Fredholm for $p$. If $D$ is $G$-Fredholm for $p$, then we write
\[
\ind_G^p(D) := \ind_G^p(F) \quad \in KK(C_0(G/K)\rtimes G, \C).
\]
In particular, in the setting of Lemma \ref{lem diff op G Fred}, 
we have the \emph{spectral triple} $\bigl(C^{\infty}_c(G \times G/K), L^2(\cE), D\bigr)$. If $D$ is $G$-Fredholm (i.e.\ for all such maps $p$), we write
\[
\ind_G(D) := \ind_G(F) \quad \in KK(C_0(G/K)\rtimes G, \C).
\]
If $D$ is only essentially self-adjoint, then we define the $G$-Fredholm property for $D$, and its equivariant index, in terms of its self-adjoint closure.

\begin{example}
Suppose $M/G$ is compact. Then the function $p^*h$ is compactly supported for all $h\in C^{\infty}_c(G/K)$ and all smooth, equivariant maps $p\colon M\to G/K$.
If $D$ is elliptic, then the Rellich lemma therefore implies that for
all $e\in C^{\infty}_c(G)$ and $h\in C^{\infty}_c(G/K)$, the operator
\[
(D^2+1)^{-1}\pi_G(e) p^*h = \pi_G(e)  (D^2+1)^{-1}p^*h
\]
is compact. Hence $D$ is $G$-Fredholm. This is a (trivial) special case of Proposition 6.4 and Remark 8.19 in \cite{Kasparov14}, for transversally elliptic operators. In the elliptic case, this is analogous to the usual argument that the triple \eqref{eq KK D} is a Kasparov cycle.
\end{example}
This example shows that all challenges in investigating which elliptic differential operators are $G$-Fredholm come from cases where $M/G$ is noncompact.

\subsection{Deformed Dirac operators} \label{sec def Dirac}

Braverman \cite{Braverman02} developed equivariant index theory of
%One area of equivariant index theory for non-cocompact actions was developed by Braverman \cite{Braverman02}. He studied 
deformed Dirac operators for actions by compact groups on possibly noncompact manifolds. We will see that this theory fits into the framework of $G$-Fredholm operators, where it generalises to noncompact groups.

Let us define the deformed Dirac operators considered by Braverman. (They already played an important role on compact manifolds in \cite{Zhang98}.) Let $M$, $G$ and $\cE$ be as in Subsection \ref{sec G index}. From now on, we suppose there is a vector bundle homomorphism
 \[
 c\colon TM \to \End(\cE),
 \]
whose image lies in the skew-adjoint, odd endomorphisms, such that for all $v \in TM$,
\[
c(v)^2 = -\|v\|^2.
\]
Then $\cE$ is called a \emph{Clifford module} over $M$, and $c$ is called the \emph{Clifford action}.

A \emph{Clifford connection} is a Hermitian connection $\nabla^{\cE}$ on $\cE$ that preserves the grading on $\cE$, such that for all vector fields $v, w \in \cX(M)$,
\[
[\nabla^{\cE}_v, c(w)] = c(\nabla^{TM}_v w),
\]
where $\nabla^{TM}$ is the Levi--Civita connection on $TM$. We will identify $TM \cong T^*M$ via the Riemannian metric. Then the Clifford action $c$ defines a map
\[
c\colon \Omega^1(M; \cE) \to \Gamma^{\infty}(\cE). 
\]
The \emph{Dirac operator} $D$ associated to a Clifford connection $\nabla^{\cE}$ is defined as the composition
\begin{equation} \label{eq def Dirac}
D\colon \Gamma^{\infty}(\cE) \xrightarrow{\nabla^{\cE}} \Omega^1(M; \cE) \xrightarrow{c} \Gamma^{\infty}(\cE).
\end{equation}
In terms of a local orthonormal frame $\{e_{1}, \ldots, e_{\dim M}\}$ of $TM$, one has
\begin{equation} \label{eq Dirac local}
D = \sum_{j=1}^{\dim M} c(e_j) \nabla^{\cE}_{e_j}.
\end{equation}
This operator interchanges sections of $\cE^+$ and $\cE^-$. We will denote the restriction of $D$ to $\Gamma^{\infty}(\cE^{\pm})$ by  $D^{\pm}$. 

Suppose that 
for all $g \in G$, $m \in M$, $v \in T_mM$ and $u \in \cE_m$ we have\footnote{In fact, this condition implies that the action by $G$ preserves the Riemannian metric.}
\[
g\cdot  c(v)u = c(g\cdot v) g\cdot u.
\]
Then $\cE$ is called a \emph{$G$-equivariant Clifford module} over $M$. In this case, the Dirac operator associated to a $G$-invariant Clifford connection is $G$-equivariant.  
We fix  a $G$-invariant Clifford connection $\nabla^{\cE}$ on $\cE$ for the rest of this paper, and consider the Dirac operator $D$ associated to $\nabla^{\cE}$.

If $M$ and $G = K$ are \emph{compact}, then the kernel of the Dirac operator $D$ is finite-dimensional, and we have its equivariant index
\[
\ind_K(D)  = [\ker D^+] - [\ker D^-] \in R(K).
\]
More generally, if $M/G$ is compact, we can apply the analytic assembly map   $\mu_M^G$ from \cite{Connes94} to the $K$-homology class $[D]$ as in Example \ref{ex KK D}, to obtain an index
\[
\mu_M^G[D] \in KK(\C, C^*G).
\]
Our goal in this paper is to develop index theory for cases where both $G$ and $M/G$ are noncompact, however.

To define an index when $M/G$ is noncompact, we consider a smooth, equivariant map
\[
\psi\colon M\to \kg.
\]
It induces a vector field $v^{\psi} \in \cX(M)$, defined by
\[
v^{\psi}_m = \ddt \exp(-t\psi(m))\cdot m,
\]
for all $m\in M$. This vector field is $G$-invariant.
\begin{definition}\label{def def Dirac}
The \emph{Dirac operator deformed by $\psi$} is the operator
\[
D_{\psi} = D - \ii c(v^{\psi})
\]
on $\Gamma^{\infty}(\cE)$. %We will write $D^{\pm}_{\psi}$ for the even and odd parts of this operator.
\end{definition}

Let $\Zeroes(v^{\psi})\subset M$ be the set of zeroes of $v^{\psi}$.
\begin{assumption}
The set $\Zeroes(v^{\psi})$ is cocompact; i.e. $\Zeroes(v^{\psi})/G$ is compact. 
\end{assumption}

If $G$ is compact, Braverman defined  an equivariant index of the Dirac operator deformed by $f\psi$, for a function $f$ that is \emph{admissible} in the following sense.
\begin{definition} \label{def admissible}
Let a real-valued function $\rho \in C^{\infty}(M)^G$ be given. A nonnegative function $f\in C^{\infty}(M)^G$ is \emph{$\rho$-admissible} if, outside a cocompact subset of $M$, we have
\[
\frac{f^2}{\|df\| + f + 1} \geq \rho.
\]
\end{definition}
This property of a function $f$ reflects that it grows fast enough compared to its derivative. (Braverman's notion of admissibility, as in Definition 2.6 in \cite{Braverman02}, is slightly different. The one we use is sufficient, however.) Admissible functions always exist.
\begin{lemma}\label{lem global f}
For any real-valued $\rho \in C^{\infty}(M)^G$, there exist $\rho$-admissible functions.
\end{lemma}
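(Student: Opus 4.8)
The plan is to produce an admissible function of the form $f = \phi\circ b$, where $b\in C^{\infty}(M)^G$ is a function all of whose sublevel sets $\{b\le t\}$ are cocompact, and $\phi\colon[1,\infty)\to[1,\infty)$ is a smooth increasing function that grows fast enough relative to $\rho$. Since admissibility only gets easier when $\rho$ is replaced by a larger function, the point is to dominate $\rho$ (and, eventually, $\|db\|$) by an increasing function of $b$ and then let $\phi$ run away from it.

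First I would construct $b$. Fixing $m_0\in M$, the function $b_0(m):=d(m,G\cdot m_0)$, the Riemannian distance from $m$ to the orbit of $m_0$, is continuous and $G$-invariant because the metric is $G$-invariant, and if $b_0(m)\le t$ then some $g\in G$ satisfies $d(g^{-1}m,m_0)<t+1$, so $\{b_0\le t\}\subseteq G\cdot\overline{B(m_0,t+1)}$. As $M$ is complete, $\overline{B(m_0,t+1)}$ is compact by Hopf--Rinow, so its $G$-saturation is cocompact, and therefore the closed $G$-invariant set $\{b_0\le t\}$ is cocompact. Approximating $b_0$ uniformly by a smooth $G$-invariant function (using a $G$-invariant partition of unity, available since the action is proper) and adding a constant yields $b\in C^{\infty}(M)^G$ with $b\ge 1$ and every $\{b\le t\}$ cocompact.

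Next, the continuous $G$-invariant functions $\rho$ and $\|db\|$ are bounded on each cocompact set $\{b\le t\}$, so $\mu(t):=1+\sup\{\rho(m)+\|db(m)\| : b(m)\le t\}$ is finite; choose a smooth increasing $R\colon[1,\infty)\to[1,\infty)$ with $R\ge\mu$, so that $\rho(m)\le R(b(m))$ and $\|db(m)\|\le R(b(m))$ for all $m$. For $f=\phi\circ b$ one has $\|df\|=\phi'(b)\|db\|$, hence $\|df\|+f+1\le R(b)\bigl(\phi'(b)+\phi(b)+1\bigr)$; so $f$ will be $\rho$-admissible, with the inequality even holding on all of $M$, as soon as $\phi(t)^2\ge R(t)^2\bigl(\phi'(t)+\phi(t)+1\bigr)$ for every $t\ge 1$. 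To arrange this, set $w(t):=e^{-t}/(6R(t)^2)$ and $\phi(t):=\bigl(\int_t^\infty w(s)\,ds\bigr)^{-1}$. Then $\phi$ is smooth and increasing on $[1,\infty)$, with $\phi(1)>1$ (as $\int_1^\infty w<1$) and $\phi'=w\phi^2$; and since $R$ is increasing, $\int_t^\infty w\le e^{-t}/(6R(t)^2)$, so $\phi(t)\ge 6R(t)^2e^t$. Feeding this bound in gives $R^2\phi'=(e^{-t}/6)\phi^2\le\phi^2/6$, $R^2\phi\le\phi^2/6$ and $R^2\le\phi^2/6$, whence $R^2(\phi'+\phi+1)\le\phi^2/2$, as required. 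Thus $f=\phi\circ b$ is nonnegative, smooth and $G$-invariant, with $f^2/(\|df\|+f+1)\ge R(b)\ge\rho$ everywhere on $M$.

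I do not expect a genuine obstacle here. The one step requiring care is making $\phi$ outgrow $R$ while keeping the derivative contribution $R^2\phi'$ dominated by $\phi^2$, which is exactly what the choice $\phi=\bigl(\int_t^\infty w\bigr)^{-1}$ with the tiny $R$-dependent weight $w$ achieves automatically, since then $\phi'/\phi^2=w$ is small while $\phi\gtrsim R^2e^t$ is large; the existence of a smooth $G$-invariant cocompact exhaustion function $b$ for a proper, non-cocompact action is standard.
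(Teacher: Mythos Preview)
Your proof is correct and complete; the explicit construction $\phi(t)=\bigl(\int_t^\infty w\bigr)^{-1}$ with $w=e^{-t}/(6R(t)^2)$ does exactly what you claim, and the construction of the $G$-invariant exhaustion function $b$ is standard for proper actions.

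Your route is genuinely different from the paper's. The paper assumes $\rho\ge 1/4$, then invokes Lemma~C.3 of \cite{Mathai13} to produce a positive $G$-invariant smooth $f$ with $f^{-1}\le\rho^{-1}/4$ and $\|d(f^{-1})\|\le\rho^{-1}/2$; the admissibility inequality then falls out of the algebraic identity
\[
\frac{f^2}{\|df\|+f+1}=\bigl(\|d(f^{-1})\|+f^{-1}+f^{-2}\bigr)^{-1}.
\]
So the paper outsources the analytic content (building a $G$-invariant function with prescribed smallness of value and derivative) to a cited lemma and finishes in three lines. Your argument is longer but self-contained: you separate the geometric input (the exhaustion $b$) from the growth problem, reduce the latter to a one-variable inequality $\phi^2\ge R^2(\phi'+\phi+1)$, and solve that by an explicit ODE-type construction. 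What your approach buys is transparency and independence from \cite{Mathai13}; what the paper's approach buys is brevity and the observation that, after passing to $g=f^{-1}$, admissibility becomes a linear smallness condition on $g$ and $dg$, which is conceptually cleaner than chasing a quadratic inequality in $\phi$.
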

\begin{proof}
Without loss of generality, we may assume that $\rho \geq 1/4$.
By Lemma C.3 in \cite{Mathai13}, there is
a positive, $G$-invariant smooth function $f$ such that
\[
\begin{split}
f^{-1} &\leq \rho^{-1}/4 \\
\|d(f^{-1})\|&\leq \rho^{-1}/2.
\end{split}
\]
 Then $f^{-1} \leq 1$, so $f^{-2} \leq f^{-1}$, and hence
\[
\frac{ f^2}{\|df\| + f  + 1} = \bigl(\|d (f^{-1})\| + f^{-1} + f^{-2} \bigr)^{-1} \geq \bigl(\|d (f^{-1})\| + 2f^{-1} \bigr)^{-1} \geq \rho.
\]
\end{proof}

\begin{theorem}[Braverman] \label{thm Braverman}
Suppose $G=K$ is compact. Then
there is a real-valued function $\rho_{\Br} \in C^{\infty}(M)^K$ such that for all $\rho_{\Br}$-admissible functions $f\in C^{\infty}(M)^K$, and all
 irreducible representations $V$ of $K$, the multiplicity $m^{\pm}_V$ of $V$ in
\[
\ker D_{f\psi} \cap L^2(\cE^{\pm})
\]
is finite. The index
\[
\ind_K^{\Br}(D_{f\psi}) := \sum_{V\in \hat K} (m^+_V - m^-_V)V \quad \in \Rhat(K)
\]
is independent of the $\rho_{\Br}$-admissible function $f$, the connection $\nabla^{\cE}$ and the complete Riemannian metric on $M$.
\end{theorem}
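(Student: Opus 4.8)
The statement is Braverman's, and the plan is to deduce it from \cite{Braverman02}; the one point requiring genuine work is to show that our notion of $\rho$-admissibility (Definition \ref{def admissible}), for a suitable choice of $\rho_{\Br}$, is strong enough to run Braverman's arguments. Throughout, note that since $K$ is compact and acts properly by isometries, every $K$-invariant cocompact subset of $M$ is compact; in particular $\Zeroes(v^{\psi})$ is compact. Also $c(fv^{\psi}) = f\,c(v^{\psi})$ is skew-adjoint, so $D_{f\psi}$ is self-adjoint, and it commutes with the $K$-action, so $\ker D_{f\psi}\cap L^2(\cE^{\pm})$ is a unitary $K$-representation.

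The analytic heart is a Weitzenböck-type estimate. A local computation with \eqref{eq Dirac local} gives
\[
D_{f\psi}^2 = D^2 + f^2\|v^{\psi}\|^2 + 2\ii\,\nabla^{\cE}_{fv^{\psi}} - \ii\,c(\grad f)c(v^{\psi}) - \ii f\sum_{j} c(e_j)c\bigl(\nabla^{TM}_{e_j}v^{\psi}\bigr),
\]
so $D_{f\psi}^2 = D^2 + f^2\|v^{\psi}\|^2 + R_f$ with $R_f$ a first-order operator whose symbol has size $\sim f\|v^{\psi}\|$ and whose zeroth-order part has size $\sim \|df\|\,\|v^{\psi}\| + f\,\|\nabla^{TM}v^{\psi}\|$. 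Combining this with the Bochner identity for $D$ and estimating, one wants: outside a compact set, the contribution of $R_f$ to $\|D_{f\psi}s\|^2 = \langle D_{f\psi}^2 s, s\rangle$ can be absorbed into a fraction of $\langle f^2\|v^{\psi}\|^2 s, s\rangle$ plus a fraction of $\|Ds\|^2$ plus a bounded, compactly supported term. I would therefore take $\rho_{\Br}$ to be any smooth $K$-invariant function on $M$ which, on the complement of a $K$-invariant relatively compact neighbourhood of $\Zeroes(v^{\psi})$, dominates a fixed large multiple of the functions $1 + \|v^{\psi}\|^{-1} + \|\nabla^{TM}v^{\psi}\|\,\|v^{\psi}\|^{-2}$ together with the curvature terms entering the Bochner identity and Braverman's Definition 2.6 (all of which are smooth away from $\Zeroes(v^{\psi})$); by Lemma \ref{lem global f} there are $\rho_{\Br}$-admissible functions. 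The key lemma to prove is then that any $\rho_{\Br}$-admissible $f$ satisfies
\[
D_{f\psi}^2 \geq \tfrac12 f^2\|v^{\psi}\|^2 - C
\]
outside a compact set, equivalently that $f$ is admissible in Braverman's sense. This is the main obstacle: it is exactly here that the inequality $f^2 \geq \rho_{\Br}(\|df\| + f + 1)$ with $\rho_{\Br}$ large is used, and the delicate term to control is the first-order piece $2\ii\nabla^{\cE}_{fv^{\psi}}$ of $R_f$ — the unbounded anticommutator already flagged in the introduction.

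Granting this, finiteness of multiplicities follows as in \cite{Braverman02}: decomposing $L^2(\cE) = \bigoplus_{V\in\hat K} V\otimes\Hom_K(V, L^2(\cE))$, the operator $D_{f\psi}$ preserves each summand, and on it the above estimate shows that $D_{f\psi}^2$ differs by a compact operator from one with discrete spectrum (bounded below by a function proper modulo the compact set $\Zeroes(v^{\psi})$), hence has finite-dimensional kernel. Thus $m_V^{\pm} < \infty$ and $\ind_K^{\Br}(D_{f\psi}) \in \Rhat(K)$ is defined. For independence of the data, I would invoke Braverman's deformation arguments: two $\rho_{\Br}$-admissible functions are joined by a path of admissible functions (using, e.g., that $\lambda f$ is $\rho_{\Br}$-admissible whenever $f$ is and $\lambda \geq 1$, and adjusting the reference function along the path), and $D_{f_t\psi}$ then has locally constant index on $[0,1]$; the $K$-invariant Clifford connections and the complete $K$-invariant Riemannian metrics form convex families along which the same homotopy argument applies, at each stage re-deriving the estimate of the previous paragraph. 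The subtlest point is metric-independence, since a change of metric alters the $L^2$-pairing as well as $\|v^{\psi}\|$, $\|df\|$ and $\rho_{\Br}$ simultaneously; here one follows Braverman's treatment, the only new input being the reconciliation of admissibility notions established above.
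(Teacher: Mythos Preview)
Your approach is correct and matches the paper's: the paper does not actually prove this theorem but simply records after the statement that it is Theorem~2.9 in \cite{Braverman02}, with independence following from Braverman's cobordism invariance result, Theorem~3.7 there. You go further than the paper by sketching the reconciliation between Definition~\ref{def admissible} here and Braverman's notion of admissibility---a point the paper only acknowledges parenthetically (``Braverman's notion of admissibility\ldots is slightly different. The one we use is sufficient, however.'') without justification; your Weitzenb\"ock expansion agrees with Lemma~\ref{lem Bochner} and your choice of $\rho_{\Br}$ is reasonable. The one place you diverge from what is cited: for independence of the metric and connection you propose direct convex-path homotopies, whereas the paper and Braverman invoke cobordism invariance. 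Your homotopy is fine for $f$ (compare Lemma~\ref{lem indep f}), but for the metric the $L^2$-space itself changes along the path, which is precisely why Braverman, and the paper's own Proposition~\ref{prop indep} in the noncompact-$G$ case, route through a bordism on $S^1\times M$ rather than an operator homotopy on a fixed Hilbert space.
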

 This is Theorem 2.9 in \cite{Braverman02}. Independence of the various choices follows from a general cobordism invariance result for this index, Theorem 3.7 in \cite{Braverman02}. Note that if $G=K$, there is only one map $M \to G/K$.
 
Equivariant index theory of deformed Dirac operators is relevant for example to geometric quantisation. Already in the compact case, deformed Dirac operators were used by Tian and Zhang in \cite{Zhang98} to obtain a localisation result for the index of a Dirac operator.  For compact groups and noncompact manifolds, they were used in  \cite{HochsSong15,  Zhang14, Paradan11}. 
The indices used in \cite{Zhang14, Paradan11} are defined differently, but are equal to Braverman's. 
The fact that these three indices turn out to be equal is an indication that they are natural objects to study.
Another reason why it is natural to consider deformed Dirac operators is that the deformation term arises from basic constructions in certain cases. For example,
 for $\Spinc$-Dirac operators, the deformation just amounts to a different choice of connection (see Remark 3.7 in \cite{HochsSong15}). 
 %And if $v^{\psi}$ is the gradient of  a smooth function $\varphi$ on $M$, then $D_{\psi} = e^{\ii \varphi} D e^{-\ii \varphi}$.
 More generally, it is interesting to investigate a class of equivariant elliptic operators for non-cocompact actions that have well-defined equivariant indices. A complication for the deformed Dirac operators studied here is that the anticommutator $Dc(v^{\psi}) + c(v^{\psi})D$ is not a vector bundle endomorphism of $\cE$, but has a first order part (see Lemma \ref{lem Bochner}). This is in contrast to Callias-type deformations of Dirac operators, see \cite{Anghel93, Bruening92b, Bunke95, Callias78, Kucerovsky01}.
 
\subsection{Noncompact groups; the main result} 
 
We now allow $G$ to be noncompact. 
In \cite{Braverman14, Mathai13}, one studies an index of deformed Dirac operators that only involves $G$-invariant sections of $\cE$, i.e.\ the isotypical component of the trivial representation. It is  an interesting and natural question if this can be extended to nontrivial representations.
However, it is not clear a priori how to do this, or even where such an index should take values. For one thing,  the unitary dual of a noncompact group is not discrete. For another, the nontrivial irreducible representations of a noncompact simple group are infinite-dimensional, which means Braverman's arguments in \cite{Braverman02} do not apply to nontrivial representations. 
 
 The techniques used in \cite{Braverman14, Mathai13} fundamentally only apply to $G$-invariant sections, but
the notion of $G$-Fredholm operators makes a completely different approach possible. This
 allows us to generalise Braverman's index to noncompact groups and nontrivial representations. This is the main result in this paper.
\begin{theorem}[Deformed Dirac operators are $G$-Fredholm] \label{thm def Dirac G Fred}
Let $p\colon M\to G/K$ be smooth and equivariant. 
There is a real-valued function $\rho \in C^{\infty}(M)^G$ (depending on the Riemannian metric on $TM$, the connection on $\cE$ used to define $D$, and the map $p$) such that the operator $D_{f\psi}$ is $G$-Fredholm for $p$, for all $\rho$-admissible functions $f$. 
\end{theorem}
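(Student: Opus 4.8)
The plan is to verify the $G$-Fredholm property through the criterion of Lemma \ref{lem diff op G Fred}: it suffices to show that for every $e\in C^{\infty}_c(G)$ and $h\in C^{\infty}_c(G/K)$ the operator $(D_{f\psi}^2+1)^{-1}\pi_G(e)\,p^*h$ on $L^2(\cE)$ is compact. (Since $M$ is complete, $D_{f\psi}$ is essentially self-adjoint, so this operator makes sense.) The range of $\pi_G(e)\,p^*h$ consists of sections supported in $p^{-1}(C)$ with $C:=(\supp e)\cdot(\supp h)\subset G/K$ compact, but — unlike in the Example following Lemma \ref{lem diff op G Fred}, or in Kasparov's theorem — the set $p^{-1}(C)$ need not be cocompact in $M$. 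So I would extract compactness from two different sources: near $\Zeroes(v^{\psi})$ from the Assumption that this set is cocompact, and away from it from the growth of the deformation term $f^2\|v^{\psi}\|^2$.

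First I would record the Bochner formula for $D_{f\psi}^2$ (this is Lemma \ref{lem Bochner}): $D_{f\psi}^2 = D^2 + f^2\|v^{\psi}\|^2 + R_f$, where $R_f$ is a first order operator whose coefficients are controlled by $\|df\|$, $f$ and geometric data, and whose genuinely first order part is essentially $-2\ii f\nabla^{\cE}_{v^{\psi}}$. Then I would decompose the operator using an IMS-type localisation. Pick $G$-invariant $\chi_1,\chi_2\in C^{\infty}(M)^G$ with $\chi_1^2+\chi_2^2=1$, with $\chi_1\equiv 1$ on a cocompact neighbourhood of $\Zeroes(v^{\psi})$, with $\supp\chi_2$ inside the cocompact set where $\|v^{\psi}\|\geq\varepsilon>0$, and with bounded differentials; then $D_{f\psi}^2 = \chi_1 D_{f\psi}^2\chi_1 + \chi_2 D_{f\psi}^2\chi_2 - \|d\chi_1\|^2 - \|d\chi_2\|^2$, so modulo a bounded perturbation the two regions can be treated separately. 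This is the decomposition carried out in Section \ref{sec decomp Dirac}.

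On $\supp\chi_1$, which is cocompact, I would argue as in the cocompact case: a $G$-invariant cutoff adapted to $\supp\chi_1$ together with the Rellich lemma and the averaging over $\supp e$ built into $\pi_G(e)$ show that $(D_{f\psi}^2+1)^{-1}\chi_1\pi_G(e)\,p^*h$ is compact, exactly in the spirit of the Example following Lemma \ref{lem diff op G Fred}. On $\supp\chi_2$, where $\|v^{\psi}\|\geq\varepsilon$, the task is to show that $f^2\|v^{\psi}\|^2$ dominates $R_f$ once $f$ is $\rho$-admissible — and this is where the function $\rho$ must be chosen to grow suitably fast, in a way depending on the metric, $\nabla^{\cE}$, $\psi$ and $p$. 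The zeroth order part of $R_f$ is absorbed directly, using that $\rho$-admissibility gives $\|df\|\lesssim f^2/\rho$. The first order term $-2\ii f\nabla^{\cE}_{v^{\psi}}$ must be handled with more care: it differentiates only in the direction $v^{\psi}$, and I would estimate $\langle -2\ii f\nabla^{\cE}_{v^{\psi}}s,s\rangle$ by integration by parts, trading it for $\operatorname{div}(v^{\psi})$- and curvature-type zeroth order terms rather than absorbing it into $f^2\|v^{\psi}\|^2$ (which would cancel the good term). The outcome is a confining estimate $\chi_2(D_{f\psi}^2+1)\chi_2\geq \chi_2\bigl(\tfrac12(D^2+1)+b_f\bigr)\chi_2$ with $b_f\geq 0$ a $G$-invariant function that, for a suitable choice of $\rho$, tends to infinity on $p^{-1}(C)\cap\supp\chi_2$ for every compact $C\subset G/K$; combining the confinement with the local compactness coming from $D_{f\psi}^2+1\gtrsim D^2+1$ shows that $(D_{f\psi}^2+1)^{-1}\chi_2\pi_G(e)\,p^*h$ is compact. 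This estimate is the content of Section \ref{sec loc est}. Adding the two contributions and invoking Lemma \ref{lem diff op G Fred} finishes the proof.

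The main obstacle is precisely this local estimate on $\supp\chi_2$. Because the anticommutator $Dc(v^{\psi})+c(v^{\psi})D$ has a genuine first order part, a naive Cauchy--Schwarz bound of the $f$-times-this-term against $f^2\|v^{\psi}\|^2$ is fatal: the two cancel. One has to keep $D^2$ available, integrate the cross term by parts, and only then invoke admissibility; making this quantitative, and ensuring the confining function $b_f$ escapes fast enough along every tube $p^{-1}(C)$ — which is what forces the dependence of $\rho$ on $p$ — is the crux. The cocompact piece near $\Zeroes(v^{\psi})$, by contrast, is a routine adaptation of the standard Rellich argument.
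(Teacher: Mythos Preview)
Your overall architecture --- localise near and away from $\Zeroes(v^{\psi})$, use Rellich on the cocompact piece, and extract smallness from the growth of $f^2\|v^{\psi}\|^2$ elsewhere --- matches the paper's in spirit. But the step you identify as ``the crux'' has a genuine gap, and the paper resolves it by a mechanism you do not invoke.

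You claim that the first-order term $A_5=-2\ii f\nabla^{\cE}_{v^{\psi}}$ can be ``traded by integration by parts for $\operatorname{div}(v^{\psi})$- and curvature-type zeroth order terms''. This is not correct. Since $f$ is $G$-invariant and $v^{\psi}$ is tangent to orbits, $v^{\psi}f=0$, and one computes $A_5^*=A_5-2\ii f\operatorname{div}(v^{\psi})$; thus the \emph{self-adjoint part} of $A_5$ is $A_5-\ii f\operatorname{div}(v^{\psi})$, which is still first order. Concretely, $\operatorname{Re}\langle A_5 s,s\rangle = 2\int f\,\operatorname{Im}(\nabla^{\cE}_{v^{\psi}}s,s)$, and this genuinely involves a derivative of $s$: only the \emph{real} part of $(\nabla^{\cE}_{v^{\psi}}s,s)$ equals $\tfrac12 v^{\psi}|s|^2$. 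So integration by parts does not yield the quadratic-form lower bound $D_{f\psi}^2\geq \tfrac12 D^2+b_f$ you need; and as you yourself note, absorbing $A_5$ by Cauchy--Schwarz into either $\|Ds\|^2$ or $\|f\|v^{\psi}\|s\|^2$ destroys one of them.

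The paper's key idea is different and essentially uses the group action rather than calculus on $M$: the troublesome term $\nabla^{\cE}_{v^{\psi}}$ is a \emph{$G$-differential operator} (it differentiates only along orbits), and any such first-order operator becomes \emph{bounded} after composition with $\pi_G(e)p^*h$, via the identity $\calL_X\circ\pi_G(e)=\pi_G(L_X e)$ (Proposition~\ref{prop est GDO}). This is the noncompact-$G$ substitute for Braverman's restriction to $K$-isotypic components. Accordingly, the paper does not seek an operator inequality for $D_{f\psi}^2$; instead it bounds the \emph{norm} of $(D_{f\psi}^2+1)^{-1}\pi_G(e)p^*h\,\chi_j^2$ directly, using a further decomposition $D=D_{G/K}+D_N$ (with $D_{G/K}$ a $G$-differential operator and $D_N$ commuting with $\pi_G(e)$ and $p^*h$), a comparison of $(D_{f\psi}^2+1)^{-1}$ with $(D_ND_N^*+f^2\|v^{\psi}\|^2+1)^{-1}$ (Lemma~\ref{lem diff inverses}), and a countably infinite cocompact cover $\{U_j\}$ rather than a two-piece IMS split --- so that each term is compact by Rellich, and norm-summability of the series (Proposition~\ref{prop sum G Fred}) gives compactness of the sum. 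A further technical point absent from your sketch is that on each $U_j$ the relevant operators are made invertible by embedding $U_j$ into a complete manifold with cylindrical end (Subsection~\ref{sec cylinder}).
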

In the setting of this result, we have the $G$-index
\[
\ind^p_G(D_{f\psi}) \in KK(C_0(G/K) \rtimes G, \C),
\]
for $\rho$-admissible functions $f$. This index is independent of the map $p$, the Riemannian metric on $M$, the function $f$, and the connection $\nabla^{\cE}$. Because the function $\rho$ depends on these data, we need to be somewhat careful in the precise formulation of this independence property.
\begin{proposition} \label{prop indep}
For $j=0,1$, let a
 complete, $G$-invariant Riemannian metric $B_j$ on $TM$ be given, and a corresponding $G$-equivariant Clifford action $c_j$ by $TM$ on the vector bundle $\cE$. Write $\cE_j$ for the vector bundle $\cE$ with this Clifford action.
 Fix a
 $G$-invariant Clifford connection $\nabla^{\cE_j}$ on $\cE_j$, and a smooth, equivariant map
   $p_j\colon M\to G/K$. Let $D^{\cE_j}$ be the Dirac operator associated to the connection $\nabla^{\cE_j}$. Let $\rho_j$ be a function as in Theorem \ref{thm def Dirac G Fred}, for the data $(B_j, \cE_j, \nabla^{\cE_j}, p_j)$.  Let 
 $f_j$ be a $\rho_j$-admissible function. Consider the deformed Dirac operator
 \[
 D^{\cE_j}_{f_j\psi} = D^{\cE_j} - \ii f_j c_j(v^{\psi}).
 \]
 Then
\[
\ind_G^{p_0}(D^{\cE_0}_{f_0\psi}) = \ind_G^{p_1}(D^{\cE_1}_{f_1\psi}) \quad \in KK(C_0(G/K)\rtimes G, \C). 
\]
\end{proposition}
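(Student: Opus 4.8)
The plan is to realize both indices as special values of a single $G$-Fredholm operator over an auxiliary manifold and invoke a homotopy/cobordism-type argument in $KK$-theory, in the same spirit as the proof that $\ind^p_G(F)$ is independent of $p$. First I would reduce to the case $p_0 = p_1 =: p$. Indeed, the lemma already proved in the excerpt shows $\ind^{p_0}_G = \ind^{p_1}_G$ once we know the relevant operator is $G$-Fredholm for \emph{all} maps $M \to G/K$; so after establishing the result for a fixed common $p$ and an arbitrary admissible $f$, I can slide $p_0$ to $p_1$ through a $G$-equivariant homotopy $(p_t)$ (using that $G/K$ is a universal proper $G$-space, hence $G$-contractible), building the class $[\tilde\calH, \tilde F, \tilde\pi_{G,G/K}] \in KK(C_0(G/K)\rtimes G, C([0,1]))$ exactly as in that lemma. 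The one subtlety is that the admissibility function $\rho$ depends on $p$, so I must first check that Theorem \ref{thm def Dirac G Fred} (or rather its proof) gives a $\rho$ that works uniformly along the compact family $(p_t)_{t\in[0,1]}$; this should follow because the estimates degrade continuously in $p$ and $[0,1]$ is compact, so one can take the pointwise supremum of the finitely-needed bounds — I would phrase this as: enlarge $\rho$ so that it dominates $\rho_{p_t}$ for all $t$.

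Next, fixing $p$, I would handle the remaining data $(B_j, c_j, \nabla^{\cE_j}, f_j)$. The natural tool is Braverman's cobordism invariance, cf.\ Theorem 3.7 in \cite{Braverman02}, adapted to the $G$-Fredholm setting: form the cylinder $\widetilde M := M \times \R$ with a $G$-invariant metric that is a product near the ends and interpolates between (a rescaling of) $B_0$ near $-\infty$ and $B_1$ near $+\infty$, with the analogous interpolation of Clifford actions, connections, and admissible functions $f_j\psi$ (the map $\psi$ pulls back from $M$, and $p$ pulls back as well). The key point is that a deformed Dirac operator on $\widetilde M$, built from this interpolating data, is $G$-Fredholm for the pulled-back map $\widetilde p$ — this uses the same local estimates that prove Theorem \ref{thm def Dirac G Fred}, since the deformation term $f\psi$ still vanishes exactly on $\Zeroes(v^\psi)\times \R$, which is still cocompact in $\widetilde M$, and the interpolation is through complete metrics. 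Then the standard argument (a $\Z_2$-graded vanishing: the boundary-at-infinity contributions $D^{\cE_0}_{f_0\psi}$ and $-D^{\cE_1}_{f_1\psi}$ together bound, so their indices cancel) forces $\ind^{p}_G(D^{\cE_0}_{f_0\psi}) - \ind^{p}_G(D^{\cE_1}_{f_1\psi}) = 0$. Equivalently, and perhaps cleaner to write: the space of triples $(B, c, \nabla^{\cE}, f)$ with $f$ admissible for the corresponding $\rho$ is path-connected, and along a path the associated bounded operators $F_t := D^{\cE_t}_{f_t\psi}(\,(D^{\cE_t}_{f_t\psi})^2+1)^{-1/2}$ form a norm-continuous path of $G$-Fredholm operators for $p$ (after again enlarging $\rho$ to work uniformly along the compact parameter interval), giving an operator homotopy of Kasparov $(C_0(G/K)\rtimes G, \C)$-cycles.

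The main obstacle I anticipate is precisely the uniformity of the $G$-Fredholm property along such a path (or over the cylinder): the estimates in the proof of Theorem \ref{thm def Dirac G Fred} control $(D^2_{f\psi}+1)^{-1}\pi_G(e)p^*h$ in terms of the metric, connection, $f$, and $p$, and I need these to be uniform — or at least the relevant compactness to persist — as all of these vary continuously. Concretely, I expect to need: (i) that the Bochner-type identity for the anticommutator (Lemma \ref{lem Bochner}) has coefficients varying continuously in the data, so the "positivity at infinity" from $f^2\|\psi\|^2$ wins uniformly on a compact parameter set; and (ii) that norm-continuity of $t\mapsto F_t$ holds, which is where varying the \emph{metric} (hence the Hilbert space $L^2(\cE)$, via the volume form) is most delicate — I would fix this by a unitary rescaling identifying the various $L^2$-spaces, absorbing the Radon–Nikodym factor into the operator, exactly as in Braverman's metric-independence argument. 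Once these continuity/uniformity points are in place, the conclusion is the operator-homotopy invariance of the class in $KK(C_0(G/K)\rtimes G, \C)$, which is Definition \ref{def K hom}.
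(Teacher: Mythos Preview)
Your cylinder approach has a genuine gap: on $\widetilde M = M\times\R$ the zero set of the pulled-back vector field is $\Zeroes(v^{\psi})\times\R$, whose quotient by $G$ is $\bigl(\Zeroes(v^{\psi})/G\bigr)\times\R$, which is \emph{not} compact. So the standing assumption needed to invoke Theorem~\ref{thm def Dirac G Fred} fails, and you do not get a $G$-Fredholm operator on the cylinder without an additional deformation in the $\R$-direction. In Braverman's compact-group setting one can add such a term (coming from the $S^1$-action on the boundary collar, say), but here $\psi$ must land in $\kg$ and there is no obvious extra Lie-algebra direction to exploit; manufacturing the required cocompactness on $M\times\R$ is not a formality.

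The paper avoids exactly this by working on $S^1\times M$ instead: compactness of $S^1$ makes the zero set of $v^{\tilde\psi}$ cocompact, so Theorem~\ref{thm def Dirac G Fred} applies directly and yields a Kasparov $\bigl(C(S^1)\otimes C_0(G/K)\rtimes G,\C\bigr)$-cycle. The two data sets $(B_j,c_j,\nabla^{\cE_j},p_j)$ are placed over disjoint arcs $I_0,I_1\subset S^1$, and the indices are recovered via restriction maps $r_{I_j}$ together with the suspension isomorphism $s_I$ (this is the Higson--Roe argument from Section~11.2 of \cite{higson00}, which the paper cites). Since $I_0\hookrightarrow I$ and $I_1\hookrightarrow I$ are homotopy equivalences, the two restrictions agree, and suspension pulls this back to $\ind_G^{p_0}(D^{\cE_0}_{f\psi})=\ind_G^{p_1}(D^{\cE_1}_{f\psi})$. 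Independence of the admissible function is handled separately (and first) by an explicit operator homotopy $h_t=4((1-t)f_0+tf_1)$, checked to remain $\rho$-admissible.

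Your alternative ``direct operator homotopy'' route is closer in spirit but, as the paper itself remarks, operator homotopy alone is insufficient once the metric varies, because the underlying Hilbert space $L^2(\cE)$ changes. Your proposed fix via unitary rescaling by the Radon--Nikodym factor is reasonable, but you would still owe a proof that $t\mapsto D_t(D_t^2+1)^{-1/2}$ is \emph{norm}-continuous after conjugation---a nontrivial statement for a family of unbounded operators whose domains, principal symbols, and zeroth-order deformation terms all move. The $S^1$/suspension trick sidesteps both the cocompactness problem and the varying-Hilbert-space problem in one stroke.
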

We write
\[
\ind_G(\cE, \psi) := \ind_G^{p}(D_{f\psi}),
\]
for any complete, $G$-invariant Riemannian metric on $TM$, $G$-invariant Clifford connection, smooth equivariant map $p\colon M\to G/K$ and $\rho$-admissible function $f$.
Because of Lemma \ref{lem Khom CK}, this index reduces to Braverman's index if $G$ is compact.

As far as the authors know, there is currently no
 other version of equivariant index theory for noncompact groups and orbit spaces. Here we interpret an equivariant index as taking values in an object defined purely in terms of the group acting, such as $KK(C_0(G/K)\rtimes G, \C)$. 
In the induction result in \cite{HochsSong16b}, we give an explicit description of the image of the $G$-index of a deformed Dirac operator in $\Rhat(K)$.
In \cite{HochsSong16b}, we also give some relations between the $G$-index and existing indices in cases where  $M/G$ is compact, and some properties and applications in the general case.

\subsection{Idea of the proof}\label{sec idea}

We prove Theorem \ref{thm def Dirac G Fred} and Proposition \ref{prop indep} in Sections \ref{sec decomp Dirac} and \ref{sec loc est}. Here we describe the idea of the proof of Theorem \ref{thm def Dirac G Fred}.

First of all, it is important that the operator $D_{f\psi}$ is essentially self-adjoint. This follows 
from
Proposition 10.2.11 in \cite{higson00}, which we restate here.
\begin{proposition} \label{prop ess sa}
If $A$ is a symmetric, first order differential operator on $\cE\to M$, with principal symbol $\sigma_A$, and if its propagation speed
\[
\sup\{ \|\sigma_A(v)\|; \text{$v\in TM$ and $\|v\|=1$}\}
\]
is finite, then $A$ is essentially self-adjoint.
\end{proposition}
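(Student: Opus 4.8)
The plan is to run the classical finite-propagation-speed argument for essential self-adjointness (this is the proof in \cite{higson00}, going back to Chernoff and Wolf). Regard $A$ as an unbounded operator on $L^2(\cE)$ with initial domain the compactly supported smooth sections $\Gamma^{\infty}_c(\cE)$. Since $A$ is symmetric and $M$ has empty boundary, by von Neumann's criterion essential self-adjointness amounts to the vanishing of the two deficiency spaces, i.e.\ to the implication
\[
u\in L^2(\cE),\ A^*u = \pm\ii u\ \Longrightarrow\ u = 0 .
\]
Because $A$ is a differential operator with smooth coefficients, $A^*u = \pm\ii u$ simply means $Au = \pm\ii u$ in the distributional sense; by the symmetry between the two signs it is enough to exclude a nonzero $u\in L^2(\cE)$ with $Au = \ii u$. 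Fix such a $u$.

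The geometric input is a sequence of cutoffs tuned to completeness of $M$. Fixing a basepoint $x_0$, completeness makes all metric balls precompact, so one can smooth the distance function $d(\cdot,x_0)$ to an $r\in C^{\infty}(M)$ with $\|dr\|_\infty\le 2$ that is proper (tends to $\infty$ at infinity). Choosing $\chi\in C^{\infty}(\R)$ with $\chi\equiv 1$ on $(-\infty,1]$, $\chi\equiv 0$ on $[2,\infty)$ and $|\chi'|\le 2$, the functions $\phi_n := \chi(r/n)$ then lie in $C^{\infty}_c(M)$, satisfy $0\le\phi_n\le 1$, converge pointwise to $1$, and obey $\|d\phi_n\|_\infty\le 4/n\to 0$.

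For real $\phi\in C^{\infty}_c(M)$ the commutator $[A,\phi]$ is the order-zero operator obtained by evaluating the principal symbol $\sigma_A$ on $d\phi$ (up to a unimodular factor); hence $[A,\phi]u$ is supported in $\supp(d\phi)$ and $\|[A,\phi]\|_{\cB(L^2(\cE))}\le c\,\|d\phi\|_\infty$, where $c<\infty$ is the propagation speed. Granting that $\phi u\in\operatorname{dom}(\bar A)$ with $\bar A(\phi u)=\phi\cdot Au+[A,\phi]u=\ii\,\phi u+[A,\phi]u$, symmetry of $\bar A$ forces $\langle\bar A(\phi u),\phi u\rangle\in\R$, so taking imaginary parts yields
\[
\|\phi u\|^2 = -\operatorname{Im}\langle[A,\phi]u,\phi u\rangle \le \|[A,\phi]u\|\,\|\phi u\| \le c\,\|d\phi\|_\infty\,\|u\|\,\|\phi u\|,
\]
whence $\|\phi u\|\le c\,\|d\phi\|_\infty\,\|u\|$. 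Taking $\phi=\phi_n$ and letting $n\to\infty$, the right side goes to $0$ while $\|\phi_n u\|\to\|u\|$ by dominated convergence; therefore $u=0$, as desired. (In the situation of this paper $D_{f\psi}$ has the same principal symbol as $D$, namely Clifford multiplication, of propagation speed $1$, so the proposition indeed applies to it.)

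The one step requiring genuine care — and the main obstacle — is the claim that $\phi u\in\operatorname{dom}(\bar A)$ with the stated formula for $\bar A(\phi u)$: since $A$ need not be elliptic, interior elliptic regularity is unavailable. I would handle this by regularising $u$ with Friedrichs mollifiers in local charts: using that $\phi u$ has compact support and that $Au=\ii u\in L^2(\cE)$, Friedrichs' commutator lemma (the commutator of $A$ with a mollifier tends to $0$ strongly on $L^2$, thanks to smoothness of the coefficients of $A$) produces smooth sections $w_k\to\phi u$ with $Aw_k\to\phi\cdot Au+[A,\phi]u$ in $L^2(\cE)$, which gives both $\phi u\in\operatorname{dom}(\bar A)$ and the formula.
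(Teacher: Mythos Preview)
Your argument is correct and is essentially the standard Chernoff-type proof; the paper itself gives no proof of this proposition but simply restates Proposition 10.2.11 of \cite{higson00}, whose proof is precisely the finite-propagation-speed/cutoff argument you have written out. Your identification of the domain issue (that $\phi u\in\operatorname{dom}(\bar A)$) as the delicate step, and your resolution via Friedrichs mollifiers, matches the treatment in that reference.
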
 

To apply Lemma \ref{lem diff op G Fred}, we
choose an open cover $\{U_j\}_{j=0}^{\infty}$ of $M$ by $G$-invariant, relatively cocompact open sets $U_j$. We choose these sets so that for all $j$, the boundary $\partial U_j$ is a smooth submanifold of $M$, and has a neighbourhood in $U_j$ diffeomorphic to $\partial U_j \times [0, 1[$.
Because $\Zeroes(v^{\psi})/G$ is compact, we can choose this cover so that $\|v^{\psi}\|$ has a positive lower bound on $U_j$ for all $j\geq 1$. Then in particular, $\Zeroes(v^{\psi}) \subset U_0$. In addition, we choose this cover so that every point in $M$ is contained in only finitely many of the sets $U_j$.

Let $\{\chi_j\}_{j=0}^{\infty}$ be a sequence of $G$-invariant functions such that $\supp (\chi_j)\subset U_j$ for all $j$, and $\{\chi_j^2\}_{j=0}^{\infty}$ is a partition of unity. Let $p\colon M\to G/K$ be a smooth, equivariant map. Then for all $f\in C^{\infty}(M)^G$, and all $e\in C^{\infty}_c(G)$ and $h\in C^{\infty}_c(G/K)$, we have
\beq{eq sum G Fred}
(D_{f\psi}^2+1)^{-1}\pi_G(e) p^*h = \sum_{j=0}^{\infty} (D_{f\psi}^2+1)^{-1}\pi_G(e) p^*h \, \chi_j^2.
\eeq
Because the function $p^*h\, \chi_j^2$ has compact support for all $j$, the Rellich lemma implies that all terms in the sum on the right hand side are compact operators. Therefore, the operator $D_{f\psi}$ is $G$-Fredholm if $f$ is $\rho$-admissible, for $\rho$ as in the following proposition.
\begin{proposition} \label{prop sum G Fred}
There is a real-valued function $\rho \in C^{\infty}(M)^G$, such that for all
 $e\in C^{\infty}_c(G)$ and all $h\in C^{\infty}_c(G/K)$, there is a constant $B_{e, h}$, such that 
 for 
 %all smooth, equivariant maps $p\colon M\to G/K$, 
 all $\rho$-admissible functions $f$, and all $j\geq 1$, 
\beq{eq norm sum Fred}
\|(D_{f\psi}^2+1)^{-1}\pi_G(e) p^*h\, \chi_j^2 \| \leq 2^{-j} B_{e, h}.
\eeq
\end{proposition}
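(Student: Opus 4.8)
The plan is to estimate, for each $j \geq 1$, the norm of the operator $(D_{f\psi}^2 + 1)^{-1} \pi_G(e) p^* h\, \chi_j^2$ by combining two ingredients: a lower bound for $D_{f\psi}^2$ on the support of $\chi_j$ coming from a Bochner-type formula for the square of the deformed Dirac operator, and the fact that $\pi_G(e) p^* h\, \chi_j^2$ has operator norm that can be controlled (in fact it is bounded uniformly in $j$, since $e$ and $h$ have compact support and the cover is locally finite with fibrewise-bounded overlaps). First I would write out the Bochner formula: by Lemma~\ref{lem Bochner} (the Lichnerowicz/Bochner identity referred to in the text), $D_{f\psi}^2 = D^2 + f^2 \|v^\psi\|^2 + f\, R_\psi + (\text{first order in } f \text{ and } df)$, where $R_\psi$ is a bundle endomorphism independent of $f$, and the first-order correction involves $\|v^\psi\|$, $\|\nabla v^\psi\|$, and $\|df\|$. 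On the cocompact piece $U_j$ (with $j \geq 1$), $\|v^\psi\|$ has a positive lower bound, so $D_{f\psi}^2 \geq f^2 c_j - f\, c_j' - c_j''$ as an inequality of operators restricted to sections supported in a neighbourhood of $\supp(\chi_j)$, for $G$-invariant functions $c_j, c_j', c_j''$ built from the geometry on $U_j$.

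The next step is the key one: I want to choose $\rho$ so that $\rho$-admissibility of $f$ forces $(D_{f\psi}^2 + 1)^{-1}$ to be small in norm on the range of multiplication by functions supported in $U_j$, with the smallness decaying like $2^{-j}$ against the combinatorics of the cover. Concretely, I would define $\rho$ on $U_j \setminus \bigcup_{i<j} U_i$ (or more carefully, using the collar structure of $\partial U_j$) to be a sufficiently large multiple of $2^j$ times the local constants $c_j, c_j', c_j''$ and the overlap bounds; using a partition-of-unity-type construction subordinate to the cover one patches these together into a single smooth $G$-invariant $\rho$. Then, for $f$ $\rho$-admissible, the admissibility inequality $f^2/(\|df\|+f+1) \geq \rho$ gives, after the estimate in the Bochner formula, a pointwise lower bound $D_{f\psi}^2 + 1 \geq C 2^j$ on the relevant region, whence $\|(D_{f\psi}^2+1)^{-1} \pi_G(e) p^*h\, \chi_j^2\| \leq (C 2^j)^{-1} \|\pi_G(e) p^*h\, \chi_j^2\| \leq 2^{-j} B_{e,h}$, absorbing the $j$-independent bound on $\|\pi_G(e) p^*h\, \chi_j^2\|$ and the constant $C$ into $B_{e,h}$.

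The main obstacle is making the operator inequality argument rigorous: $(D_{f\psi}^2+1)^{-1} \pi_G(e) p^* h\, \chi_j^2$ acts on sections that need not be supported in $U_j$, so one cannot directly say "$D_{f\psi}^2 + 1$ is large on the range of $\chi_j^2$". The standard remedy, which I would follow, is to localise: write $\chi_j = \chi_j \tilde\chi_j$ with $\tilde\chi_j$ a slightly larger cutoff supported in $U_j$ and equal to $1$ near $\supp(\chi_j)$, and insert $\tilde\chi_j$ so that one estimates $\tilde\chi_j (D_{f\psi}^2+1)^{-1} \tilde\chi_j$ — using that $(D_{f\psi}^2+1)^{-1}$ maps into the domain of $D_{f\psi}$ and that commutators $[\chi_j, D_{f\psi}]$ are bounded (by $\|d\chi_j\|$, independent of $f$) — together with the elementary fact that if $A \geq 0$ is self-adjoint and $\xi$ is a cutoff then $\|\xi (A+1)^{-1}\xi\| \leq \sup_{\supp \xi} (\text{lower bound for } A + 1)^{-1}$ whenever $A$ dominates a multiplication operator on $\supp \xi$ modulo terms killed by the cutoff. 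This quadratic-form manipulation, tracking the collar neighbourhoods of the $\partial U_j$ so the cutoffs match up and the constants remain $G$-invariant and summable, is where the real work lies; the rest is bookkeeping with the Bochner formula and the definition of $\rho$-admissibility.
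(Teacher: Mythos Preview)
There is a genuine gap in your proposal, and it sits exactly where the paper says the difficulty lies. In the Bochner identity for $D_{f\psi}^2$ (Lemma~\ref{lem Bochner}), the cross term $\{D,-\ii f c(v^{\psi})\}$ is \emph{not} a bundle endomorphism: it equals
\[
-\ii\sum_{k} c(e_k)c(\nabla^{TM}_{e_k} f v^{\psi}) \;+\; 2\ii f\,\nabla^{\cE}_{v^{\psi}},
\]
and the second summand is a genuine first-order differential operator, scaling linearly with $f$. Your sketch writes this as ``$f\,R_\psi$ with $R_\psi$ a bundle endomorphism plus lower-order corrections in $f$ and $df$'', which is not correct. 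Once you try to absorb $2\ii f\,\nabla^{\cE}_{v^{\psi}}$ into $D^2+f^2\|v^{\psi}\|^2$ by Young/Kato inequalities on sections supported in $U_j$, you are forced into bounds of the type
\[
|\langle 2\ii f\,\nabla^{\cE}_{v^{\psi}} s,s\rangle|
\;\le\; \epsilon\,\langle \nabla^{*}\nabla s,s\rangle \;+\; \epsilon^{-1}\,\tfrac{\Lambda_j^2}{\kappa_j^2}\,\langle f^2\|v^{\psi}\|^2 s,s\rangle,
\]
with $\Lambda_j=\sup_{U_j}\|v^{\psi}\|$ and $\kappa_j=\inf_{U_j}\|v^{\psi}\|$. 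Since $\Lambda_j/\kappa_j\ge 1$, no choice of $\epsilon$ lets you keep a positive multiple of both $D^2$ and $f^2\|v^{\psi}\|^2$; the operator inequality $D_{f\psi}^2\ge f^2 c_j - f c_j' - c_j''$ you assert simply does not hold on $U_j$. This is not a technicality: the paper singles out exactly this term as the obstruction (see the remark after Lemma~\ref{lem Bochner} and the comment at the end of Subsection~\ref{sec def Dirac} contrasting with Callias-type deformations).

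What the paper does instead is use $\pi_G(e)$ \emph{structurally}, not merely as a bounded factor. The offending term $\nabla^{\cE}_{v^{\psi}}$ only differentiates in $G$-orbit directions (it is a \emph{$G$-differential operator} in the sense of Subsection~\ref{sec G diff}), and the identity $\calL_X\circ\pi_G(e)=\pi_G(L_X e)$ converts any such operator composed with $\pi_G(e)p^*h$ into a bounded one (Proposition~\ref{prop est GDO}). The argument then proceeds not via an operator lower bound for $D_{f\psi}^2$, but by splitting $D=D_{G/K}+D_N$, comparing $(D_{f\psi}^2+1)^{-1}$ with $(D_N D_N^{*}+f^2\|v^{\psi}\|^2+1)^{-1}$ through the resolvent identity (Lemma~\ref{lem diff inverses}), and estimating each of the five error pieces $A_1,\ldots,A_5$ after composing with $\pi_G(e)p^*h$. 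All of this is carried out on a completed manifold $V\supset U_j$ with cylindrical end so that the relevant inverses exist (Subsection~\ref{sec cylinder} and Lemma~\ref{lem DM DV U}). Your localisation via enlarged cutoffs and the quoted ``elementary fact'' about $\xi(A+1)^{-1}\xi$ would be fine \emph{if} you had a global inequality $A\ge V$ with $V$ a multiplication operator large on $U_j$; but as explained above, no such inequality is available for $A=D_{f\psi}^2$, so that route is blocked.
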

If $f$ is $\rho$-admissible for such a function $\rho$, then the sum \eqref{eq sum G Fred} of compact operators converges in the operator norm, to a compact operator. Hence $D_{f\psi}$ is $G$-Fredholm, by Lemma \ref{lem diff op G Fred}.

The idea behind the proof of Proposition \ref{prop sum G Fred} is to show that we have
\[
D_{f\psi}^2 \pi_G(e)p^*h \, \chi_j^2 = (D^2 + f^2 \|v^{\psi}\|^2) \pi_G(e)p^*h\, \chi_j^2 + A_j
\]
where $A_j$ is a bounded operator. For $j\geq 1$, the term $f^2 \|v^{\psi}\|^2$ is large on $U_j$ if $f$ is large there, so that the norm on the left hand side of \eqref{eq norm sum Fred} is small in an appropriate sense. Making this idea precise turns out to require a more elaborate argument than the authors had expected initially.

It is important that the function $\rho$ does not depend on $e$ and $h$ in Proposition \ref{prop sum G Fred}. For this reason, we will need to carefully distinguish between constants depending on $e$ and $h$, and constants depending on other data, in the estimates in Sections \ref{sec decomp Dirac} and \ref{sec loc est}.

%%%%%%%%%%%%%%%%%%%
%%% Decomposing Dirac ops %%%
%%%%%%%%%%%%%%%%%%%

\section{Decomposing Dirac operators} \label{sec decomp Dirac}

This section contains some preparatory material for the proof of Proposition \ref{prop sum G Fred} in Section \ref{sec loc est}. We will decompose the square of a deformed Dirac operator, and use the fact that some terms in this decomposition only differentiate in orbit directions. These terms are \emph{$G$-differential operators} in the sense of Subsection \ref{sec G diff}, which means we can apply a general estimate for such operators. We will also use embeddings of open subsets of $M$ into complete manifolds, as discussed in Subsection \ref{sec cylinder}. This will make certain locally defined operators invertible.

Fix a smooth, equivariant map $p\colon M\to G/K$. It is automatically a submersion, so that
\[
N := p^{-1}(eK) \subset M
\]
is a smooth, $K$-invariant submanifold. Furthermore, the map $G\times N\to M$, mapping an element $(g, n) \in G\times N$ to $gn$, descends to a $G$-equivariant diffeomorphism
\beq{eq M G K}
G\times_K N \xrightarrow{\cong} M.
\eeq
%Here $G\times_K N$ is the quotient of $G\times N$ by the action by $K$ defined by
%\[
%k\cdot (g, n) = (gk^{-1}, kn),
%\]
%for  $k\in K$, $g\in G$ and $n\in N$.

We also fix %a $G$-invariant Clifford connection $\nabla^{\cE}$ on $\cE$, 
an equivariant map $\psi\colon M \to \kg$ for which $\Zeroes(v^{\psi})/G$ is compact, and a real-valued function $f \in C^{\infty}(M)^G$. Let 
% $D$ be the Dirac operator associated to $\nabla^{\cE}$, and 
$D_{f\psi}$ be the deformed Dirac operator as in Definition \ref{def def Dirac}.

\subsection{$G$-differential operators} \label{sec G diff}

Before analysing deformed Dirac operators, we obtain an estimate for operators whose highest-order parts only differentiate in orbit directions. In this subsection and the next, $\cE \to M$ is a $G$-equivariant, Hermitian vector bundle as before, but we will not use the Clifford action for now.

For an element $X \in \kg$, we denote the induced vector field on $M$ by $X^M$. Our sign convention is that for all $m\in M$,
\[
X^M_m = \ddt \exp(-tX)\cdot m.
\]
The Lie derivative of sections of $\cE$ with respect to $X$ is denoted by $\calL_X$. Let $\tau_M\colon T^*M \to M$ be the cotangent bundle projection.

Let $T^*_GM\subset T^*M$ be the subset of elements that annihilate tangent vectors to orbits:
\[
T^*_GM = \bigl\{\xi\in T^*M; \langle\xi, X^M_{\tau_M(\xi)}\rangle = 0 \text{ for all $X\in \kg$}\bigr\}.
\]
If $G$ is compact, a differential operator is transversally elliptic if its principal symbol is invertible outside a compact subset of $T^*_G M$. In a sense, $G$-differential operators have the opposite property.
\begin{definition}
A differential operator $A$ on $\Gamma^{\infty}(\cE)$, with principal symbol $\sigma_A$, is a \emph{$G$-differential operator} if $\sigma_A$ is zero on $T^*_GM$.
\end{definition} 
It follows immediately that, if $G$ is compact, then for any transversally elliptic operator $B$ and any $G$-differential operator $A$, the operator $A+B$ has the same symbol class in $K$-theory as $B$:
\[
[\sigma_{A+B}] = [\sigma_B] \in K^0(T^*_GM).
\]

A $G$-differential operator of order at least $2$ can have lower-order terms that do not just differentiate in orbit directions. We will only consider first order $G$-differential operators, however, and these can be described entirely in terms of differentiation along orbits. Fix a basis $\{X_1, \ldots, X_{\dim G}\}$ of $\kg$.
\begin{lemma}\label{lem GDO 1}
A first order differential operator on $\cE$ is a $G$-differential operator if and only if there are vector bundle endomorphisms $a_j$ and $b$ of $\cE$ such that
\beq{eq GDO 1}
A = \sum_{j=1}^{\dim G} a_j \calL_{X_j} + b.
\eeq
\end{lemma}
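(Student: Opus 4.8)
The plan is to prove the two implications separately. The "if" direction is immediate: if $A$ has the form \eqref{eq GDO 1}, then its principal symbol at a covector $\xi \in T^*_mM$ is $\sigma_A(\xi) = \sum_j \ii \langle \xi, (X_j^M)_m\rangle \, a_j|_m$ (up to the usual sign/normalization conventions), since the zeroth-order term $b$ does not contribute to the symbol and $\calL_{X_j}$ has principal symbol given by pairing $\xi$ with the vector field $X_j^M$. If $\xi \in T^*_GM$, then by definition $\langle \xi, (X_j^M)_m\rangle = 0$ for every $j$, so $\sigma_A(\xi) = 0$. Hence $A$ is a $G$-differential operator.

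For the "only if" direction, suppose $A$ is a first order differential operator whose symbol vanishes on $T^*_GM$. First I would reduce to a local/pointwise statement about the symbol: $\sigma_A$ is a bundle map $T^*M \to \End(\cE)$, linear on fibers, and the condition is that it kills the subbundle-with-singularities $T^*_GM$. At a point $m$ where the orbit map has locally constant rank (which is all points, since the action is proper, hence $G$-orbits have locally constant dimension on each component — more carefully, one can work locally on the open set where $\dim(G\cdot m)$ is locally constant, and proper actions have this property on a dense open set, but in fact for the slice-type decomposition near any point one has a clean local model), the vectors $(X_1^M)_m, \dots, (X_{\dim G}^M)_m$ span the tangent space $T_m(G\cdot m)$ to the orbit, and $T^*_GM|_m$ is exactly the annihilator of this subspace. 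A linear map $T_m^*M \to \End(\cE_m)$ vanishing on the annihilator of a subspace $W \subset T_mM$ must factor through restriction to $W$; equivalently, it is a linear combination of the functionals $\xi \mapsto \langle \xi, w\rangle$ for $w \in W$. Choosing $w$ among the spanning set $(X_j^M)_m$, we get endomorphisms $a_j|_m \in \End(\cE_m)$ with $\sigma_A(\xi)|_m = \sum_j \langle \xi, (X_j^M)_m\rangle \, a_j|_m$. The main obstacle is doing this \emph{smoothly} and \emph{globally}: when the orbit dimension jumps or the vectors $(X_j^M)_m$ become linearly dependent, one cannot simply invert a Gram matrix, so one must argue that the $a_j$ can still be chosen to depend smoothly on $m$. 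I would handle this by noting that $A$ is $G$-equivariant is not assumed, but one can still use a partition of unity subordinate to charts in which the slice theorem for proper actions (Palais) gives a product-type local model $G\times_{G_m} S$; on such a chart the orbit directions are a genuine subbundle and the smooth factorization is routine, and the resulting local $a_j$ patch together.

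Having matched principal symbols, I would set $A' := A - \sum_j a_j \calL_{X_j}$, which is again a first order differential operator but now with zero principal symbol, hence a zeroth order operator, i.e.\ a vector bundle endomorphism $b$ of $\cE$. This gives exactly \eqref{eq GDO 1}. I expect the smoothness/patching of the $a_j$ across orbit-type strata to be the only genuinely delicate point; everything else is linear algebra at the symbol level plus the standard fact that a first order operator with vanishing principal symbol is multiplication by a bundle endomorphism.
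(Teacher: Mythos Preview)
Your ``if'' direction matches the paper's. For the converse, the paper takes a much shorter route: rather than building the $a_j$ locally and patching, it writes down an explicit global candidate $a_j := \sigma_A\bigl((X_j^M)^*\bigr)$, where $(X_j^M)^*$ is the Riemannian-metric dual one-form of $X_j^M$, and then asserts that $\tilde A := \sum_j a_j \calL_{X_j}$ has the same principal symbol as $A$. The payoff of that approach is that smoothness is automatic and no partition of unity is needed, which completely sidesteps the orbit-type-stratum issue you flag. A caveat: the paper's symbol verification is not quite right as stated, since $\sum_j \langle \xi, X_j^M\rangle (X_j^M)^* - \xi$ lies in $T^*_GM$ only when the $X_j^M$ are pointwise orthonormal (for instance, with $G=\R$ acting on $\R^2$ by diagonal translation and $A=\calL_{X_1}$, the formula yields $\tilde A = 2A$). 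Your local-plus-patching argument is therefore the more robust of the two; just be aware that in a slice chart $G\times_{G_m}S$ the orbit directions need \emph{not} form a subbundle (isotropy can drop along $S$), so the clean way to execute your plan is to pass to the cover $G\times S$, where the lifted $X_j$ are linearly independent and trivialize $TG\times 0$, carry out the factorization there, and check that the resulting $a_j$ descend.
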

\begin{proof}
Let $A$ be a first order differential operator on $\cE$. If $A$ is of the form \eqref{eq GDO 1}, then for all $m\in M$ and $\xi \in (T^*_GM)_m$,
\[
\sigma_{A}(\xi) =  \sum_{j=1}^{\dim G} (a_j)_m  \langle\xi, (X_j^M)_m\rangle = 0.
\]

Conversely, suppose $A$ is a $G$-differential operator. For all $j$, let $(X_j^M)^* \in \Omega^1(M)$ be dual to $X_j^M$ with respect to the Riemannian metric. Define the differential operator
\[
\tilde A := \sum_{j=1}^{\dim G} \sigma_A\bigl((X_j^M)^*\bigr)\calL_{X_j}
\]
on $\cE$. Then for all $m\in M$ and $\xi \in T^*M$,
\[
\sigma_{\tilde A}(\xi) =  \sigma_A \bigl((X_j^M)^*_m\bigr)\langle \xi, (X_j^M)_m\rangle = \sigma_A(\xi).
\]
Hence $b:= A - \tilde A$ is a vector bundle endomorphism. 
\end{proof}

\subsection{An estimate for $G$-differential operators} \label{sec est G diff}

One ingredient of the proof of Proposition \ref{prop sum G Fred} is an estimate for $G$-equivariant, first order $G$-differential operators. 
In the proof of this estimate, we will use certain compact subsets of $G$. 
\begin{lemma}\label{lem Seh}
For all $e\in C^{\infty}_c(G)$ and $h \in C^{\infty}_c(G/K)$, there is a compact subset $S_{e, h} \subset G$, independent of the map $p$, such that for all $\tilde e \in C^{\infty}_c(G)$ with support inside $\supp e$, and all $s\in \Gamma(\cE)$, $g\in G\setminus S_{e, h}$ and $n\in N$, we have
\[
(\pi_G(\tilde e)p^*h \, s)(gn) = 0.
\]
\end{lemma}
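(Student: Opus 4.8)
The plan is to trace exactly where the mass of $\pi_G(\tilde e) p^* h\, s$ can live, working in the coordinates $M \cong G \times_K N$ from \eqref{eq M G K}. Write a point of $M$ as $[g', n']$ with $g' \in G$, $n' \in N$. First I would unwind the definition \eqref{eq pi G e}: for $s \in \Gamma(\cE)$ and a point $m = gn \in M$,
\[
(\pi_G(\tilde e) p^* h\, s)(gn) = \int_G \tilde e(g')\, g' \cdot \bigl( (p^* h\, s)(g'^{-1}gn) \bigr)\, dg'.
\]
The factor $(p^* h)(g'^{-1} g n)$ equals $h(p(g'^{-1}gn)) = h(g'^{-1}g \cdot p(n)) = h(g'^{-1}g K)$, since $p$ is equivariant and $n \in N = p^{-1}(eK)$. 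Hence the integrand vanishes unless $g'^{-1} g K \in \supp h$, i.e.\ unless $g \in g' \cdot q^{-1}(\supp h)$ where $q\colon G \to G/K$ is the quotient map. Combined with the constraint $g' \in \supp \tilde e \subset \supp e$ coming from $\tilde e(g') \neq 0$, this forces
\[
g \in (\supp e) \cdot q^{-1}(\supp h) =: S_{e,h}.
\]

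The next step is to check that $S_{e,h}$ is in fact compact and independent of $p$. It is the image of the compact set $(\supp e) \times q^{-1}(\supp h)$ under multiplication $G \times G \to G$; note $q^{-1}(\supp h)$ is compact because $\supp h$ is compact and $q$ is a proper map (as $K$ is compact — this is the point where we use compactness of $K$, which the paper has standing from Section \ref{sec ind result}), so $S_{e,h}$ is compact. Crucially, the definition of $S_{e,h}$ involves only $e$, $h$, and the group-theoretic data $G, K$, not the map $p$ — the only place $p$ entered was through the identity $p(n) = eK$ for $n \in N$, which held for \emph{any} choice of smooth equivariant $p$ after we use that $N$ is defined as $p^{-1}(eK)$. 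Actually, to make the independence statement literally true as phrased, I would define $S_{e,h} := (\supp e)\cdot q^{-1}(\supp h)$ once and for all, before choosing $p$; then for each $p$ the argument above shows it works.

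Finally, I would simply reassemble: if $g \in G \setminus S_{e,h}$ and $n \in N$, then for every $g'$ in the support of the integrand we reach a contradiction, so the integral is zero, giving $(\pi_G(\tilde e) p^* h\, s)(gn) = 0$, as claimed. I do not expect a serious obstacle here; the only mildly delicate points are (i) being careful that $S_{e,h}$ can be chosen before $p$, which is handled by taking the explicit product set above, and (ii) justifying that $q^{-1}(\supp h)$ is compact, which is immediate from compactness of $K$. The lemma is essentially a support-tracking computation, and the real work of the paper lies in the estimates that use it in Section \ref{sec loc est}.
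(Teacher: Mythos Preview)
Your proof is correct and essentially identical to the paper's: the paper defines $S_{e,h} = \{g\in G : \supp(e)\cap g\,(q^{-1}(\supp h))^{-1}\neq\emptyset\}$, which is exactly your set $(\supp e)\cdot q^{-1}(\supp h)$, and then observes that the integrand $\tilde e(g')\,h(g'^{-1}gK)$ vanishes for all $g'$ when $g\notin S_{e,h}$. Your write-up is in fact slightly more complete, since you spell out why $S_{e,h}$ is compact and why it is independent of $p$, points the paper leaves implicit.
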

\begin{proof}
Let $e\in C^{\infty}_c(G)$ and $h \in C^{\infty}_c(G/K)$ be given. Let $q\colon G\to G/K$ be the quotient map. Set
\[
S_{e, h} := \{g\in G; \supp(e)\cap g \bigl(q^{-1}(\supp(h))\bigr)^{-1} \not=\emptyset \}.
\]
Then, if  $\tilde e\in C^{\infty}_c(G)$ is supported in $\supp e$, we have for all  $s\in \Gamma(E)$, $g\in G$ and $n\in N$,
\[
(\pi_G(\tilde e)p^*h\, s)(gn) = \int_G \tilde e(g') h(g'^{-1}gK) g'(s(g'^{-1}gn))\, dg'.
\]
If $g \not\in S_{e,h}$, then $\tilde e(g') h(g'^{-1}gK) = 0$ for all $g'\in G$.
\end{proof}

For all $X\in \kg$, let $L_X$ be the operator on $C^{\infty}(G)$ defined by the infinitesimal left regular representation. Then for all $e\in C^{\infty}_c(G)$,
\beq{eq LXe}
\calL_X \circ \pi_{G}(e) = \pi_G(L_X(e)).
\eeq
This will be used to prove the estimate for $G$-differential operators we need.

\begin{proposition}\label{prop est GDO}
For any $e\in C^{\infty}_c(G)$ and any $h\in C^{\infty}_c(G/K)$, there is a constant $B_{e, h} > 0$, such that for any $G$-equivariant, first order $G$-differential operator $A$ on $\cE$, with $a_j$ and $b$ as in Lemma \ref{lem GDO 1} bounded on $N$, there is a constant $C_{A, p} > 0$, independent of $e$ and $h$, such that the operator %for all $s\in L^2(\cE)$,
\[
A \pi_G(e)p^*h 
\] 
on $L^2(\cE)$
is bounded, with norm at most $B_{e, h} C_{A,p}$.
\end{proposition}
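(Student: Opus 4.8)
The plan is to write $A\pi_G(e)p^*h$ using the diffeomorphism $M\cong G\times_K N$ of \eqref{eq M G K}, reducing the operator-norm estimate on $L^2(\cE)$ to an estimate on $L^2$-sections over $G\times N$, and then to bound the $G$-differential operator $A$ by moving the derivatives $\calL_{X_j}$ onto the smoothing operator $\pi_G(e)$ via \eqref{eq LXe}. First I would use Lemma \ref{lem GDO 1} to write $A = \sum_{j=1}^{\dim G} a_j\calL_{X_j} + b$, with $a_j,b$ bounded on $N$ (hence, by $G$-invariance of their operator norms, bounded on all of $M$ with the same bound $C_{A,p}$). Then for $s\in L^2(\cE)$,
\[
A\pi_G(e)p^*h\, s = \sum_{j=1}^{\dim G} a_j\, \calL_{X_j}\bigl(\pi_G(e)(p^*h\, s)\bigr) + b\, \pi_G(e)(p^*h\, s)
= \sum_{j=1}^{\dim G} a_j\, \pi_G(L_{X_j}e)(p^*h\, s) + b\, \pi_G(e)(p^*h\, s),
\]
using \eqref{eq LXe}. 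Since $\|a_j\|_\infty, \|b\|_\infty \leq C_{A,p}$ pointwise, it now suffices to bound the operator norm of each $\pi_G(\tilde e)p^*h$ on $L^2(\cE)$, for $\tilde e$ ranging over $\{L_{X_1}e,\ldots,L_{X_{\dim G}}e, e\}$ — all of which are in $C^\infty_c(G)$ with support inside $\supp e$ — and to set $B_{e,h}$ to be $(\dim G + 1)$ times the maximum of these norms. Crucially this $B_{e,h}$ depends only on $e$ (through $e$ and its left-invariant derivatives $L_{X_j}e$) and $h$, not on $A$.

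The remaining point is to bound $\|\pi_G(\tilde e)p^*h\|_{\cB(L^2(\cE))}$ uniformly for $\tilde e$ supported in $\supp e$. Here I would invoke Lemma \ref{lem Seh}: there is a compact set $S_{e,h}\subset G$ so that for such $\tilde e$, the section $\pi_G(\tilde e)p^*h\,s$ is supported inside $S_{e,h}\cdot N \subset M$, which is relatively cocompact; moreover the pointwise formula
\[
(\pi_G(\tilde e)p^*h\, s)(gn) = \int_G \tilde e(g') h(g'^{-1}gK)\, g'\bigl(s(g'^{-1}gn)\bigr)\, dg'
\]
exhibits $\pi_G(\tilde e)p^*h$ as an integral operator with kernel supported in a fixed compact subset of $G\times G$ (depending only on $e,h$), with kernel bounded in terms of $\|e\|_\infty$, $\|h\|_\infty$, the Haar measure of $\supp e$, and the unitarity of the $G$-action on $\cE$. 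A standard Schur-test / Young's inequality estimate in the $G\times_K N$ picture then gives $\|\pi_G(\tilde e)p^*h\| \leq \|\tilde e\|_{L^1(G)}\,\|h\|_\infty$, which is bounded above by $\|e\|_{L^1(G)}\,\|h\|_\infty$ (or by the analogous quantity with $L_{X_j}e$ in place of $e$) — a quantity depending only on $e$ and $h$. Combining, one obtains the bound $\|A\pi_G(e)p^*h\| \leq B_{e,h}\,C_{A,p}$.

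The main obstacle I anticipate is being careful about \emph{which constant depends on what}: the whole point of the proposition (as emphasized in Subsection \ref{sec idea}) is that $B_{e,h}$ is independent of $A$ and $C_{A,p}$ is independent of $e,h$. The place where this could go wrong is the step where $\calL_{X_j}$ is moved past the cutoff $p^*h$ and the multiplication by $a_j$; one must verify that $a_j$ and $b$, being genuine (zeroth-order) vector bundle endomorphisms, commute with $\pi_G(e)$ only up to terms that are still controlled, and in fact the cleanest route is to apply $\calL_{X_j}$ directly to $\pi_G(e)(p^*h\,s)$ as above so that no commutator with $a_j$ is ever needed — the endomorphisms $a_j$ sit on the far left and contribute only their sup-norm $C_{A,p}$. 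A secondary, more technical point is justifying the $L^2$-boundedness of $\pi_G(\tilde e)p^*h$ rigorously in the twisted-product coordinates $G\times_K N$ (choosing a $K$-invariant measure on $N$ so that $L^2(\cE)\cong (L^2(G)\otimes L^2(\cE|_N))^K$, then recognizing $\pi_G(\tilde e)$ as convolution in the $G$-variable), but this is routine once the support and boundedness of the integral kernel are in hand.
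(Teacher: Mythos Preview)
There is a genuine gap at the very first step: you assert that the pointwise operator norms of the endomorphisms $a_j$ and $b$ are $G$-invariant, so that boundedness on $N$ implies the same bound on all of $M$. This is false. The operator $A$ is $G$-equivariant, but the decomposition $A=\sum_j a_j\calL_{X_j}+b$ from Lemma~\ref{lem GDO 1} need not be. Indeed, conjugating by $g\in G$ sends $\calL_{X_j}$ to $\calL_{\Ad(g)X_j}=\sum_k \Ad^k_j(g)\calL_{X_k}$, so the individual $a_j$ transform by the adjoint representation rather than being invariant; concretely $\|(a_k)_{gn}\|$ is comparable to $\sum_j|\Ad^k_j(g)|\,\|(a_j)_n\|$, which can blow up as $g$ leaves compact sets when $G$ is noncompact. (Think of $a_j=\sigma_A((X_j^M)^*)$: the vector fields $X_j^M$ are typically unbounded on $M$.) Consequently your global bound $\|a_j\|_\infty\le C_{A,p}$ is not available, and the rest of the argument collapses.

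The paper repairs exactly this point. Instead of bounding $a_j$ globally, it uses the $G$-equivariance of the whole operator $A$ to write, at a point $gn$ with $n\in N$,
\[
(As)(gn)=\sum_{j}\bigl(g(a_j)_ng^{-1}\bigr)\,\calL_{\Ad(g)X_j}s(gn)+\bigl(gb_ng^{-1}\bigr)s(gn),
\]
so that only $\|(a_j)_n\|$ and $\|b_n\|$ for $n\in N$ enter (these give $C_{A,p}$), at the price of the derivative $\calL_{\Ad(g)X_j}$. Applying \eqref{eq LXe} then produces $\pi_G(L_{X_k}e)$ together with coefficients $\Ad^k_j(g)$. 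This is where Lemma~\ref{lem Seh} is actually needed: it confines the relevant $g$ to the compact set $S_{e,h}$, so $|\Ad^k_j(g)|\le\|\Ad\|_{S_{e,h}}$, a quantity that depends only on $e,h$ and is absorbed into $B_{e,h}$. Your use of Lemma~\ref{lem Seh} only for support of the output misses its real role here.
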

\begin{proof}
Let  $e\in C^{\infty}_c(G)$ and  $h\in C^{\infty}_c(G/K)$ be given, and let $S_{e, h}\subset G$ be as in Lemma \ref{lem Seh}. For $j, k = 1, \ldots, \dim G$, let $\Ad^k_j \in C^{\infty}(G)$ be the functions such that for all $j$, and all $g\in G$,
\[
\Ad(g)X_j = \sum_{k=1}^{\dim G} \Ad^k_j(g)X_k.
\]
Set
\[
\|\Ad\|_{S_{e, h}} := \max_{g\in S_{e, h}} \max_{j, k =1, \ldots, \dim G }|\Ad^k_j(g)|,
\]
and
\[
B_{e, h} :=\max\Bigl\{ \|e\|_{L^1(G)} \|h\|_{\infty}, \dim G \cdot \|\Ad\|_{S_{e, h}} \cdot \sum_{j=1}^{\dim G} \|L_{X_j}(e)\|_{L^1(G)} \|h\|_{\infty}\Bigr\}.
\]

Let $A$ be a $G$-equivariant $G$-differential operator. Write
\[
A = \sum_{j=1}^{\dim G} a_j \calL_{X_j} + b
\]
as in Lemma \ref{lem GDO 1}. By assumption, the pointwise norms of the vector bundle endomorphisms $a_j$ and $b$ are bounded on $N$. Set
\[
\begin{split}
C_{a, p} &:=  \sum_{j=1}^{\dim G} \sup_{n\in N} \|(a_j)_n\|;\\
C_{b, p} &:= \sup_{n\in N} \|b_n\|; \\
C_{A, p} &:= C_{a, p} + C_{b, p}.
\end{split}
\]

%Since $A$ is $G$-equivariant, so is $b$. Therefore, we have for all $m\in M$,
%\[
%\|b_m\| \leq C_{b, p}.
%\]
%Furthermore,
%\[
% \|\pi_G(e) \|_{\cB(L^2(\cE))} \leq \|e\|_{L^1(G)},
%\]
%and
%\[
%\|p^*h\|_{\cB(L^2(\cE))} \leq \|h\|_{\infty}.
%\]
%We find that
%\[
%\|b \pi_G(e) p^*h\|_{\cB(L^2(\cE))} \leq  B_{e,  h} C_{b, p} .
%\]
%
%Next, we consider the first order part
%\[
%A_1 := \sum_{j=1}^{\dim G} a_j
%\]
%of $A$. 
The operators $a_j$ are not $G$-equivariant in general, even though the whole operator $A$ is. Let $s\in \Gamma^{\infty}_c(\cE)$, $g\in G$ and $n\in N$ be given. Then equivariance of $A$ implies that
\[
\begin{split}
(g^{-1}As)(n) &= (Ag^{-1}s)(n)\\
%% Can leave out %%
%	&= \sum_{j=1}^{\dim G} (a_j)_n \bigl(\calL_{X_j} g^{-1}s \bigr)(n) + b_n (g^{-1}s)(n)\\
%	&= \sum_{j=1}^{\dim G} (a_j)_n \bigl(g^{-1} \calL_{\Ad(g)X_j} s \bigr)(n) + b_n (g^{-1}s)(n) \\
%% up to here %%	
	&= \sum_{j=1}^{\dim G} (a_j)_ng^{-1} \bigl( \calL_{\Ad(g)X_j} s (gn) \bigr) +  b_n g^{-1}(s(gn)).\\
\end{split}
\] 
So
\[
(As)(gn) = \sum_{j=1}^{\dim G} (g (a_j)_n g^{-1}) \bigl( \calL_{\Ad(g)X_j} s (gn) \bigr) + (g b_n g^{-1}) s(gn).
\]
Replacing $s$ by $\pi_G(e)p^*h\, s$,
we obtain the pointwise estimate
\[
\begin{split}
\bigl\| (A \pi_G(e)p^*h\, s)(gn) \bigr\| &\leq  \sum_{j=1}^{\dim G} \| (a_j)_n \|  \bigl\| \calL_{\Ad(g)X_j} \pi_G(e)p^*h\, s (gn) \bigr\| + \|b_n\| \|\pi_G(e)p^*h\, s(gn)\| \\
	&\leq C_{a, p} \sum_{j=1}^{\dim G}\bigl\| \calL_{\Ad(g)X_j} \pi_G(e)p^*h\, s (gn) \bigr\| + C_{b, p} \|e\|_{L^1(G)} \|h\|_{\infty} \|s(gn)\|.
\end{split}
\]
For every $j$, we have
\[
\begin{split}
\bigl\| \calL_{\Ad(g)X_j} \pi_G(e)p^*h\, s (gn) \bigr\| &\leq \sum_{k=1}^{\dim G} |\Ad^k_j(g)| \bigl\| \calL_{X_k} \pi_G(e)p^*h\, s (gn) \bigr\| \\
&\leq \sum_{k=1}^{\dim G} |\Ad^k_j(g)| \bigl\| \pi_G(L_{X_k} (e) )p^*h\, s (gn) \bigr\|, 
\end{split}
\]
where we used \eqref{eq LXe}.

The pointwise estimates obtained so far imply that
\begin{multline*}
\|A \pi_G(e)p^*h\, s \|_{L^2(\cE)}  \\\leq
C_{a, p} \dim (G) \|\Ad \|_{S_{e, h}} \sum_{k=1}^{\dim G}\bigl\| \pi_G(L_{X_k} (e) )p^*h\, s \bigr\|_{L^2(\cE)} 
 + C_{b, p} \|e\|_{L^1(G)} \|h\|_{\infty} \|s\|_{L^2(\cE)} 
\\ \leq 
B_{e, h} C_{A,p} \|s\|_{L^2(\cE)}.
\end{multline*}

\end{proof}

\subsection{The square of a deformed Dirac operator}

The first two steps in the proof of Proposition \ref{prop sum G Fred} are a decomposition of the square of the deformed Dirac operator $D_{f\psi}$, and a decomposition of the undeformed Dirac operator $D$. In both of these decompositions, $G$-differential operators appear. In Section \ref{sec loc est},   we will apply Proposition \ref{prop est GDO} to those operators.

\begin{lemma}\label{lem Bochner}
For any local orthonormal frame $\{e_1, \ldots, e_{\dim M}\}$ of $TM$, we have, locally,
\[
D_{f\psi}^2 = D^2 + f^2\|v^{\psi}\|^2 - \ii \sum_{j=1}^{\dim M} c(e_j)c(\nabla^{TM}_{e_j} fv^{\psi}) + 2\ii f\nabla^{\cE}_{v^{\psi}}.
\]
\end{lemma}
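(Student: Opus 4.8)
The plan is to compute $D_{f\psi}^2$ directly from the definition $D_{f\psi} = D - \ii c(v^\psi f)$, where I write $v := fv^\psi$ for brevity (so $c(v) = f c(v^\psi)$ since $f$ is a real-valued function). Expanding the square gives
\[
D_{f\psi}^2 = D^2 - \ii \bigl( D c(v) + c(v) D \bigr) + (\ii)^2 c(v)^2 = D^2 - \ii \bigl( D c(v) + c(v) D \bigr) - \|v\|^2,
\]
using the Clifford relation $c(v)^2 = -\|v\|^2 = -f^2\|v^\psi\|^2$. Wait — the sign: $(-\ii c(v))^2 = -c(v)^2 = \|v\|^2$, so in fact $D_{f\psi}^2 = D^2 - \ii(Dc(v)+c(v)D) + \|v\|^2$. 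The remaining task is therefore to show that the anticommutator $Dc(v)+c(v)D$ equals $\sum_j c(e_j)c(\nabla^{TM}_{e_j}v) - 2\nabla^{\cE}_{v}$, which upon multiplying by $-\ii$ and substituting $v = fv^\psi$ yields the stated formula.

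The key step is the local computation of $Dc(v) + c(v)D$ using the local expression $D = \sum_j c(e_j)\nabla^{\cE}_{e_j}$ from \eqref{eq Dirac local}. First I would expand $D\bigl(c(v)s\bigr) = \sum_j c(e_j)\nabla^{\cE}_{e_j}\bigl(c(v)s\bigr)$ and apply the Leibniz rule together with the Clifford connection property $[\nabla^{\cE}_{e_j}, c(v)] = c(\nabla^{TM}_{e_j}v)$, giving
\[
D\bigl(c(v)s\bigr) = \sum_j c(e_j) c(\nabla^{TM}_{e_j}v)\, s + \sum_j c(e_j) c(v)\nabla^{\cE}_{e_j}s.
\]
Then $c(v)Ds = \sum_j c(v)c(e_j)\nabla^{\cE}_{e_j}s$, and adding the two I use the Clifford anticommutation relation $c(e_j)c(v) + c(v)c(e_j) = -2\langle e_j, v\rangle$ (a consequence of polarizing $c(w)^2 = -\|w\|^2$) to collapse the second sum in the first line plus $c(v)Ds$ into $-2\sum_j \langle e_j, v\rangle \nabla^{\cE}_{e_j}s = -2\nabla^{\cE}_{v}s$. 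This gives exactly $Dc(v)+c(v)D = \sum_j c(e_j)c(\nabla^{TM}_{e_j}v) - 2\nabla^{\cE}_v$, and the lemma follows.

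I do not expect a serious obstacle here: this is the standard Bochner-type identity for Dirac operators, adapted to include the zeroth-order Clifford term, and every ingredient (the local formula for $D$, the Clifford connection identity, and the polarized Clifford relations) is already available in the excerpt. The one point requiring a little care is bookkeeping of signs and the fact that $v = fv^\psi$ is not parallel, so that $\nabla^{TM}_{e_j}v = (e_j f)v^\psi + f\nabla^{TM}_{e_j}v^\psi$ — but this is absorbed into the term $\sum_j c(e_j)c(\nabla^{TM}_{e_j}fv^\psi)$ as written, so no expansion is needed at this stage. A secondary point is to note that the identity is genuinely local (it depends on a choice of orthonormal frame), which is why the statement is phrased locally, but the operators $D_{f\psi}^2$, $D^2$, $f^2\|v^\psi\|^2$ and $\nabla^{\cE}_{v^\psi}$ on the right are all globally defined, so only the middle sum $\sum_j c(e_j)c(\nabla^{TM}_{e_j}fv^\psi)$ needs the frame.
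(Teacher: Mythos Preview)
Your proposal is correct and follows essentially the same route as the paper: expand the square, isolate the anticommutator $Dc(fv^{\psi})+c(fv^{\psi})D$, and compute it locally using the Clifford connection identity and the polarized Clifford relation. The paper simply states the two resulting identities without spelling out the intermediate steps you supply, so your version is a more detailed rendering of the same argument.
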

\begin{proof}
We have
\[
D_{f\psi}^2 = D^2 + f^2\|v^{\psi}\|^2 - \ii(D c(fv^{\psi}) + c(fv^{\psi}) D),
\]
and, locally,
\[
\begin{split}
D c(fv^{\psi}) + c(fv^{\psi}) D 
%% Can leave out %%
%&= \sum_{j=1}^{\dim M}\Bigl( c(e_j) \nabla^{\cE}_{e_j}  c(fv^{\psi}) + c(fv^{\psi}) c(e_j) \nabla^{\cE}_{e_j}\Bigr) \\
%	&= \sum_{j=1}^{\dim M}\Bigl( c(e_j)  \bigl(c(fv^{\psi}) \nabla^{\cE}_{e_j} + c(\nabla^{TM}_{e_j} fv^{\psi})  \bigr)+ c(fv^{\psi}) c(e_j) \nabla^{\cE}_{e_j} \Bigr) \\
%% Up to here %%	
	&= \sum_{j=1}^{\dim M} c(e_j)c(\nabla^{TM}_{e_j} fv^{\psi}) -2 f\nabla^{\cE}_{v^{\psi}}.
\end{split}
\]
\end{proof}
It will be important that in 
 this expression for $D_{f\psi}^2$, the only first order term, $l2\ii f\nabla^{\cE}_{v^{\psi}}$, is a $G$-differential operator.

Next, recall that we have
\[
M \cong G\times_K N
\]
as in \eqref{eq M G K}. We have a $G$-equivariant isomorphism of vector bundles
\beq{deco tangent}
TM \cong p^*T(G/K) \oplus G\times_K TN.
\eeq
This decomposition of $TM$ yields two projections
\beq{eq pGK pN}
\begin{split}
p_{G/K}\colon&TM \to p^*T(G/K);\\
p_N\colon& TM\to G\times_K TN.
\end{split}
\eeq
Identifying $T^*M\cong TM$ via the Riemannian metric as before, we obtain two partial Dirac operators
\[
\begin{split}
D_{G/K}\colon& \Gamma^{\infty}(\cE) \xrightarrow{\nabla^{\cE}} \Gamma^{\infty}(TM\otimes \cE)\xrightarrow{p_{G/K}\otimes 1_{\cE}} 
 \Gamma^{\infty}(p^*T(G/K)\otimes \cE)
\xrightarrow{c} \Gamma^{\infty}(\cE); \\
D_{N}\colon& \Gamma^{\infty}(\cE) \xrightarrow{\nabla^{\cE}} \Gamma^{\infty}(TM\otimes \cE)\xrightarrow{p_{N}\otimes 1_{\cE}} 
 \Gamma^{\infty}(G\times_K TN \otimes \cE)
\xrightarrow{c} \Gamma^{\infty}(\cE).
\end{split}
\]
Since $p_{G/K} + p_N$ is the identity map on $TM$, we have
\beq{eq decomp D}
D = D_{G/K} + D_{N}.
\eeq
This decomposition will be useful to us, because $D_{G/K}$ is a $G$-differential operator, while $D_N$ commutes with $p^*h$ for all $h\in C^{\infty}_c(G/K)$.

Combining Lemma \ref{lem Bochner} and the decomposition \eqref{eq decomp D} of $D$, we obtain an equality that we will use in our estimates. To state this equality, let $D_{G/K}^*$ and $D_N^*$ be the formal adjoints of $D_{G/K}$ and $D_N$, respectively, with respect the the $L^2$-inner product. (Note that these operators are not symmetric in general.) Consider the following operators on $\Gamma^{\infty}(\cE)$:
\beq{eq Al}
\begin{split}
A_1 &:= -D_{f\psi}D_{G/K}^*;\\
A_2 &:= -D_{G/K} D_N^*;\\
A_3 &:= -\ii fc(v^{\psi}) D_{G/K}^*;\\
A_4 &:= \ii \sum_{j=1}^{\dim M} c(e_j)c(\nabla^{TM}_{e_j} fv^{\psi});\\
A_5 &:= -2\ii f \nabla^{\cE}_{v^{\psi}},
\end{split}
\eeq
and
\beq{eq Delta}
\Delta := A_1 + A_2 + A_3 + A_4 + A_5.
\eeq
\begin{lemma} \label{lem diff inverses}
We have
\[
(D_{f\psi}^2 + 1)^{-1} - (D_N D_N^* + f^2 \|v^{\psi}\|^2 + 1)^{-1} =
(D_{f\psi}^2 + 1)^{-1} \Delta (D_N D_N^* + f^2 \|v^{\psi}\|^2 + 1)^{-1}.
\]
\end{lemma}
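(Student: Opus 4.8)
The identity is the standard resolvent-type identity $P^{-1} - Q^{-1} = P^{-1}(Q - P)Q^{-1}$, applied to the two positive self-adjoint operators $P := D_{f\psi}^2 + 1$ and $Q := D_N D_N^* + f^2\|v^{\psi}\|^2 + 1$, once we verify that the difference $Q - P$ equals $\Delta$ as defined in \eqref{eq Delta}. So the plan is: first establish that $D_{f\psi}^2 + 1 - (D_N D_N^* + f^2\|v^{\psi}\|^2 + 1) = -\Delta$, i.e.\ that $D_{f\psi}^2 = D_N D_N^* + f^2\|v^{\psi}\|^2 - \Delta$; then invoke the resolvent identity (valid since both $P$ and $Q$ are invertible on $L^2(\cE)$, their inverses being everywhere-defined bounded operators, and $\Delta$ maps smooth sections to smooth sections so the composition makes sense, at least as an identity on a suitable dense domain which then extends by continuity).

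For the algebraic identity, I would start from Lemma \ref{lem Bochner}:
\[
D_{f\psi}^2 = D^2 + f^2\|v^{\psi}\|^2 - \ii \sum_{j=1}^{\dim M} c(e_j)c(\nabla^{TM}_{e_j} fv^{\psi}) + 2\ii f\nabla^{\cE}_{v^{\psi}}
= D^2 + f^2\|v^{\psi}\|^2 - A_4 - A_5.
\]
Now I need to rewrite $D^2$. Using the decomposition \eqref{eq decomp D}, write $D = D_{G/K} + D_N$, so that, since $D$ is self-adjoint, $D^2 = D D = D(D_{G/K} + D_N) = D D_{G/K} + D D_N$. To bring in the formal adjoints $D_{G/K}^*$ and $D_N^*$, I would instead use $D^2 = D^* D = D^*(D_{G/K} + D_N)$ — but $D^* = D$ — hmm, the cleaner route: since $D = D_{G/K} + D_N$ and also $D = D^* = D_{G/K}^* + D_N^*$ (as $D$ is symmetric, though the summands individually are not), we can write
\[
D^2 = D D = D(D_{G/K}^* + D_N^*)^{\text{?}}
\]
which is not literally legitimate term by term. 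The correct manoeuvre is: $D^2 = D\,D$, and in the target formula we want $D_N D_N^*$ to appear, so write
\[
D^2 = (D_{G/K} + D_N)D = D_{G/K} D + D_N D.
\]
Then $D_N D = D_N(D_{G/K} + D_N)$, and I want $D_N D_N^*$; since $D_N = D - D_{G/K}$ and $D^* = D$, one has $D_N^* = D - D_{G/K}^*$, hence $D_N D_N^* = D_N D - D_N D_{G/K}^* $. So $D_N D = D_N D_N^* + D_N D_{G/K}^*$. Assembling,
\[
D^2 = D_{G/K} D + D_N D_N^* + D_N D_{G/K}^*.
\]
Next, $D_{G/K} D + D_N D_{G/K}^* = D_{G/K} D + (D - D_{G/K})D_{G/K}^* = D_{G/K}D + D D_{G/K}^* - D_{G/K}D_{G/K}^*$. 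This still is not obviously $-(A_1 + A_2 + A_3)$; I expect the bookkeeping here to be the main obstacle, and I would carry it out carefully by instead expanding $A_1 + A_2 + A_3 = -D_{f\psi}D_{G/K}^* - D_{G/K}D_N^* - \ii f c(v^{\psi})D_{G/K}^*$, substituting $D_{f\psi} = D - \ii f c(v^{\psi}) = D_{G/K} + D_N - \ii f c(v^{\psi})$, so that
\[
A_1 + A_3 = -(D_{G/K} + D_N - \ii f c(v^{\psi}))D_{G/K}^* - \ii f c(v^{\psi})D_{G/K}^* = -(D_{G/K} + D_N)D_{G/K}^* = -D D_{G/K}^*,
\]
and then $A_1 + A_2 + A_3 = -D D_{G/K}^* - D_{G/K}D_N^* = -D D_{G/K}^* - D_{G/K}(D - D_{G/K}^*)$ — wait, $D_N^* = D - D_{G/K}^*$ — $= -D D_{G/K}^* - D_{G/K}D + D_{G/K}D_{G/K}^*$. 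Comparing with the expression $D^2 = D_{G/K} D + D D_{G/K}^* - D_{G/K}D_{G/K}^* + D_N D_N^*$ derived above, we get $D^2 = D_N D_N^* - (A_1 + A_2 + A_3)$, and therefore
\[
D_{f\psi}^2 = D_N D_N^* - (A_1 + A_2 + A_3) + f^2\|v^{\psi}\|^2 - A_4 - A_5 = D_N D_N^* + f^2\|v^{\psi}\|^2 - \Delta,
\]
as desired.

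Finally, with the identity $P - Q = -\Delta$ in hand, multiply on the left by $P^{-1}$ and on the right by $Q^{-1}$: $Q^{-1} - P^{-1} = P^{-1}(P - Q)Q^{-1} = -P^{-1}\Delta Q^{-1}$, i.e.\ $P^{-1} - Q^{-1} = P^{-1}\Delta Q^{-1}$, which is exactly the claimed equation. I would add a remark that $Q$ is invertible because $D_N D_N^* \geq 0$ and $f^2\|v^{\psi}\|^2 \geq 0$, so $Q \geq 1$; that $P \geq 1$ similarly ($D_{f\psi}$ being essentially self-adjoint by Proposition \ref{prop ess sa}, so $D_{f\psi}^2 \geq 0$); and that all manipulations are first justified on the common core of smooth compactly supported sections and then extended by boundedness of the resolvents. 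The one subtlety worth flagging is that $\Delta$ is a genuine differential operator (first and second order pieces), so $P^{-1}\Delta Q^{-1}$ should be read as the bounded operator obtained by noting $\Delta Q^{-1}$ maps into $L^2$ appropriately, or simply as the content of the derived operator identity on a core; the resolvent identity itself needs no more than that $P$ and $Q$ agree up to $\Delta$ on that core.
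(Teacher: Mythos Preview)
Your proposal is correct and follows exactly the paper's approach: the resolvent identity $a^{-1}-b^{-1}=a^{-1}(b-a)b^{-1}$ together with the algebraic verification (from Lemma~\ref{lem Bochner} and the decomposition $D=D_{G/K}+D_N$) that $Q-P=\Delta$. You have simply spelled out the bookkeeping that the paper leaves implicit; the computation $A_1+A_3=-DD_{G/K}^*$ via $D_{f\psi}+\ii fc(v^\psi)=D$, and then $D^2=D_ND_N^*-(A_1+A_2+A_3)$ via $D=D_{G/K}^*+D_N^*$, is exactly what is needed.
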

\begin{proof}
For any two invertible elements $a$ and $b$ of a ring, we have
\[
a^{-1} - b^{-1} = a^{-1}(b-a)b^{-1}.
\]
Hence the claim follows from Lemma \ref{lem Bochner} and \eqref{eq decomp D}.
\end{proof}

\subsection{Embeddings into complete manifolds} \label{sec cylinder}

Let $U\subset M$ be a relatively cocompact, $G$-invariant open subset, on which $\|v^{\psi}\|$ has a positive lower bound. (We will apply what follows to the sets $U_j$ in Subsection \ref{sec idea}, for $j\geq 1$.) We would like to compare the restriction of the operator $(D_{f\psi}^2+1)^{-1}$ to such sets $U$ to operators defined entirely in terms of data on $U$. But since $(D_{f\psi}^2+1)^{-1}$ is not a local operator, it does not restrict to open sets. Similarly,  
operators defined only on $U$ may not be invertible, if $U$ is not complete. For that reason, we embed $U$ into a complete manifold, in the way we will describe now.

Suppose that $\partial U$ is smooth, and that
 a $K$-invariant neighbourhood $W_N$ of $ \partial U \cap N$ in $U\cap N$ is $K$-equivariantly isometric to
\beq{eq WN}
(\partial U \cap N) \times {]{-\delta}, 0[},
\eeq
for a $\delta > 0$. Then
\[
W:= G\cdot W_N\cong G \times_K W_N
\]
is a $G$-invariant neighbourhood of $\partial U$ in $U$, and there is a $G$-equivariant isometry
\[
W\cong \partial U  \times {]{-\delta}, 0[}.
\]
By glueing
 the ``cylinder" $\partial U\times {]{-\delta}, \infty[}$ to $U$ via this identification, 
 we obtain a  manifold $V$. 
 The product of the restricted Riemannian metric from $TM$ to $T(\partial U)$ and the Euclidean metric on $T(]{-\delta}, \infty[)$ defines a $G$-invariant Riemannian metric on  $T(\partial U  \times {]{-\delta}, \infty[})$. This extends to  
 $TV$, and makes $V$ complete.
 
Define the $K$-invariant submanifold $N_V \subset V$ by attaching $(\partial U \cap N)\times {]{-\delta}, \infty[}$ to $N\cap U$ by identifying $W_N$ with \eqref{eq WN}. Then
\[
V\cong G\times_K N_V.
\]
Let the $G$-equivariant smooth map $p_V\colon V\to G/K$ be given by $p_V(gn) = gK$, for $g\in G$ and $n\in N_V$. Then $N_V = p_V^{-1}(eK)$.

The vector bundle $\cE|_U\to U$, the Clifford action by $TU$ on it, and the Dirac operator $D|_U$ all extend to $V$. See e.g.\ Section 25 of \cite{Booss93}. We denote the extended vector bundle and Dirac operator by $\cE_V$ and $D_V$, respectively.

We will extend the map $\psi|_U\colon U\to \kg$ to a map $\psi_V\colon V\to \kg$, in such a way that $\|v^{\psi_V}\|$ has a positive lower bound on $V$. 
If we only needed a continuous extension, we could use the map
 $\tilde \psi_V\colon V\to \kg$, given by
\[
\tilde \psi_V (y) = \left\{ \begin{array}{ll}
\psi(y) & \text{if $y\in U$};\\
\psi(x) & \text{if $y = (x, t) \in \partial U \times [0, \infty[$.}
\end{array} \right.
\]
The induced vector field $v^{\tilde \psi_V}$ is continuous, and its norm has the same lower bound on $V$ as $v^{\psi}$ has on $U$. To obtain a smooth version, we use the following fact. Fix any $K$-invariant norm $\|\cdot \|$ on $\kg$.
\begin{lemma}\label{lem epsilon psi}
There is an $\varepsilon > 0$, such that for all  $G$-equivariant, continuous maps $\psi'\colon \overline{U} \to \kg$ such that $\|\psi'(n) - \psi(n)\| \leq \varepsilon$ for all $n\in \overline{U}\cap N$, the norm of the vector field $v^{\psi'}$ on $\overline{U}$ has a positive lower bound.
\end{lemma}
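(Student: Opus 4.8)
The plan is to reduce the desired uniform lower bound on the non-compact set $\overline{U}$ to a statement about the compact slice $N_U := \overline{U}\cap N$, and then to exploit continuity and fibrewise linearity of the map $(m,X)\mapsto X^M_m$ together with compactness of $N_U$.

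First I would record the geometric reductions. Since $U$ is $G$-invariant and relatively cocompact, $\overline{U}$ is $G$-invariant with $\overline{U}/G$ compact, and under the diffeomorphism $M\cong G\times_K N$ it corresponds to $G\times_K N_U$, where $N_U := \overline{U}\cap N$ is a closed, $K$-invariant subset of $N$. As $\overline{U}/G\cong N_U/K$ is compact and $K$ is compact (the orbit map of a compact group action is proper), $N_U$ is itself compact. Because $v^{\psi}$ is smooth and $\|v^{\psi}\|$ has a positive lower bound on the dense subset $U$, continuity gives a constant $c_0>0$ with $\|v^{\psi}_m\|\geq c_0$ for all $m\in\overline{U}$; in particular $\|\psi(n)^M_n\| = \|v^{\psi}_n\|\geq c_0$ for every $n\in N_U$, since $v^{\psi}_n = \psi(n)^M_n$ for $n\in N$.

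Next I would use linearity of the action of $\kg$. The map $\kg\times M\to TM$, $(X,m)\mapsto X^M_m$, is a smooth bundle homomorphism, linear in $X$; restricting the base to the compact set $N_U$, the operator norms $L_n := \sup_{\|X\|\leq 1}\|X^M_n\|$ depend continuously on $n\in N_U$, so $L := \sup_{n\in N_U} L_n<\infty$, and $L>0$ because $v^{\psi}$ is nowhere zero on $N_U$. Put $\varepsilon := c_0/(2L)$. If $\psi'\colon\overline{U}\to\kg$ is $G$-equivariant and continuous with $\|\psi'(n)-\psi(n)\|\leq\varepsilon$ for all $n\in N_U$, then for such $n$
\[
\|v^{\psi'}_n\| = \|\psi'(n)^M_n\| \geq \|\psi(n)^M_n\| - \|(\psi'(n)-\psi(n))^M_n\| \geq c_0 - L\varepsilon = \tfrac{c_0}{2}.
\]
Finally I would transport this over all of $\overline{U}$: any $m\in\overline{U}$ can be written $m = g\cdot n$ with $g\in G$ and $n\in\overline{U}\cap N = N_U$ (using $M = G\cdot N$ and $G$-invariance of $\overline{U}$), and the identity $\exp(-t\Ad(g)\psi'(n)) = g\exp(-t\psi'(n))g^{-1}$ shows $v^{\psi'}_{gn} = g\cdot v^{\psi'}_n$; since $G$ acts isometrically, $\|v^{\psi'}_m\| = \|v^{\psi'}_n\|\geq c_0/2$. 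Hence $\|v^{\psi'}\|\geq c_0/2>0$ on $\overline{U}$.

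The only genuinely non-formal point is the first paragraph's reduction to the compact slice $N_U$ (compactness of $N_U$ and the $G$-invariance that lets one restrict attention to it); once that is in place, the estimate is a routine consequence of linearity of $X\mapsto X^M_m$ and compactness, so I do not expect a real obstacle. An alternative to the explicit $\varepsilon$ is a compactness-and-contradiction argument: if no such $\varepsilon$ worked, one would get $\psi'_k\to\psi$ uniformly on $N_U$ with points $n_k\in N_U$ where $v^{\psi'_k}_{n_k}=0$; passing to a convergent subsequence $n_k\to n_\infty$ and using joint continuity of $(X,m)\mapsto X^M_m$ forces $v^{\psi}_{n_\infty}=0$, contradicting $\|v^{\psi}_{n_\infty}\|\geq c_0$.
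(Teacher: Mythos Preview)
Your proof is correct and shares the paper's overall strategy: use $G$-invariance of $\|v^{\psi'}\|$ to reduce to the compact slice $N_U=\overline{U}\cap N$, then exploit compactness. The execution of the final step differs. You argue quantitatively: bound the operator norm $L$ of $X\mapsto X^M_n$ uniformly over $N_U$, set $\varepsilon=c_0/(2L)$, and obtain the explicit lower bound $c_0/2$ via the triangle inequality. The paper instead argues topologically: the set $\{(n,X)\in N_U\times\kg : X^M_n\neq 0\}$ is open and contains the compact graph of $\psi|_{N_U}$, so a tube-lemma step produces an $\varepsilon$ with $X^M_n\neq 0$ whenever $\|X-\psi(n)\|\leq\varepsilon$; nonvanishing of $v^{\psi'}$ on the compact $N_U$ then gives a positive lower bound. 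Your route yields an explicit constant and lower bound, at the cost of the operator-norm bookkeeping; the paper's is marginally shorter. The compactness-and-contradiction alternative you sketch at the end is essentially the paper's open-set argument in sequential form.
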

\begin{proof}
Let $\psi'\colon \overline{U}\to \kg$ be a  $G$-equivariant, continuous map.
Since the  norm $\|v^{\psi'}\|$ is $G$-invariant, it has a positive lower bound on $\overline{U}$ precisely if it has one on $\overline{U}\cap N$. And since $\overline{U}\cap N$ is compact, $\|v^{\psi'}\|$ has a positive lower bound on this set precisely if it does not vanish there.

The set
\[
\bigl\{(n, X) \in (\overline{U}\cap N) \times \kg; X\not\in \kg_n \bigr\}
\]
is open in $(\overline{U} \cap N) \times \kg$. Since $(n, \psi(n))$ is in this set for all $n \in \overline{U}\cap N$, and $\overline{U} \cap N$ is compact, there is an $\varepsilon > 0$ such that for all $n\in \overline{U}\cap N$ and $X\in \kg$ for which $\|X-\psi(n)\|\leq \varepsilon$, we have $X^M_n \not=0$. Hence if $\|\psi'-\psi\|\leq \varepsilon$ on $\overline{U} \cap N$, the claim follows.
\end{proof}

Fix $\varepsilon > 0$ as in Lemma \ref{lem epsilon psi}. Let $W'_N \subset W_N$ be a $K$-invariant neighbourhood of $\partial U \cap N$ such that for all $n = (x, t) \in W'_N  \subset (\partial U \cap N) \times ]{-\delta}, 0[$,
\[
\|\psi(n) - \psi(x)\| \leq \varepsilon.
\]
Let $\chi \in C^{\infty}(N_V)^K$ be a function with values in $[0,1]$, such that
\[
\begin{split}
\chi \equiv 1 & \text{ on $(U\cap N)\setminus W'_N$};\\
\chi\equiv 0 & \text{ on $\partial U \times {]0, \infty[}$.}
\end{split}
\]
Define
$\psi_V\colon V\to \kg$
by
\[
\psi_V(n) = \left\{ \begin{array}{ll}
\psi(n) & \text{if $n\in (U\cap N)\setminus W_N$};\\
\chi(n) \psi(n) + (1-\chi(n))\psi(x) & \text{if $n = (x, t) \in (\partial U \cap N)\times {]{-\delta}, \infty[}$,}
\end{array}\right.
\]
and extended $G$-equivariantly to $V$. Then $\psi_V$ is smooth and $G$-equivariant, and $\|\psi'(n) - \psi(n)\| \leq \varepsilon$ for all $n\in \overline{U}\cap N$. Hence $\|v^{\psi_V}\|$ has a positive lower bound on $U$ by Lemma \ref{lem epsilon psi}. And if $(x, t)\in \partial U \times {[0, \infty[}$, then
\[
v^{\psi_V}_{(x, t)} = (v^{\psi}_x, 0) \in T_x(\partial U) \times T_t (]{-\delta}, \infty[), 
\]
and this also has a positive lower bound as $(x, t)$ ranges over $\partial U \times {[0, \infty[}$.

Let $f_V \in C^{\infty}(V)^G$ be any real-valued function such that $f_V|_U = f|_U$. Write
\[
D_{f_V \psi_V} := D_V - \ii c(v^{f_V \psi_V}).
\]
Write $D_V = D_{G/K}^V + D_{N^V}$  as in \eqref{eq decomp D}. Then Lemma \ref{lem diff inverses} applies directly to the corresponding operators on $V$.

\subsection{Operators on $M$ and $V$}

The reason for the constructions in Subsection \ref{sec cylinder} is that the manifold $V$ is complete, so that the operators
\beq{eq DV2+1}
D_{f_V \psi_V}^2 + 1
\eeq
and
\[
D_{N_V} D_{N_V}^* + f_V^2 \|v^{\psi_V}\|^2 + 1
\]
are symmetric, and norm-increasing. This implies that they are invertible, with bounded inverses with norms at most $1$. (This is generally not true for the operator $D_{f \psi}|_U^2 + 1$ on $L^2(\cE|_U)$, for example.) 
%Similarly, the operator
%\[
%D_{N_V} D_{N_V}^* + f_V^2 \|v^{\psi_V}\|^2 + 1
%\]
%is invertible as well.
%are essentially self-adjoint by Proposition \ref{prop ess sa}. 
%Since they are positive, they (or their closures) are therefore
% invertible. 
 Furthermore, after restriction to $U$, the above operators are equal to
\beq{eq DM2+1}
(D_{f \psi}^2 + 1)|_U
\eeq
and
\[
(D_{N} D_{N}^* + f^2 \|v^{\psi}\|^2 + 1)|_U,
\]
respectively. In Lemma \ref{lem est DM}, we will deduce an estimate on $U$  for the inverse of the operator $D_{f\psi}^2+1$ from the corresponding estimate for the inverse of \eqref{eq DV2+1}, using the following relation between these inverses.
%This implies that, locally on $U$, the difference between
 %the inverses of the operators \eqref{eq DV2+1} and $D_{f\psi}^2+1$ is bounded in a suitable way.
\begin{lemma} \label{lem DM DV U}
Let $\varphi_1, \varphi_2 \in \End(\cE)^G$ be supported inside $U$. Then there are $G$-equivariant bounded operators $T_0$ and $T_1$ on $L^2(\cE|_U)$, with norms at most $1$, and there is $\varphi \in \End(\cE)^G$, supported in $U$, such that for $\alpha \in \{0,1\}$,
\begin{multline} \label{eq diff DfVpsiV}
\varphi_1 D_{f\psi}^{\alpha} (D_{f\psi}^2+1)^{-1} \varphi_2 - 
\varphi_1 D_{f_V\psi_V}^{\alpha} (D_{f_V\psi_V}^2+1)^{-1} \varphi_2 \\
  = \varphi_1 T_0 \varphi  (D_{f_V\psi_V}^2+1)^{-1}\varphi_2 + 
  \varphi_1 T_1 \varphi D_{f_V\psi_V} (D_{f_V\psi_V}^2+1)^{-1}\varphi_2.
\end{multline}
\end{lemma}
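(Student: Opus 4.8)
The statement is a ``locality'' comparison: on the overlap region $U$, the resolvent of $D_{f\psi}$ on $M$ and the resolvent of $D_{f_V\psi_V}$ on the auxiliary complete manifold $V$ differ only by a term that is itself controlled by a compactly-(in-$U$)-supported endomorphism. The plan is to use the standard resolvent identity for two self-adjoint (closures of the) operators, combined with the fact that $D_{f\psi}$ and $D_{f_V\psi_V}$ agree as differential operators on $U$, so their difference is supported in a small region, and then to absorb the bounded remainders into operators $T_0,T_1$ of norm at most $1$ via functional calculus.

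\textbf{Key steps, in order.} First I would record that, by construction in Subsection \ref{sec cylinder}, the operators $D_{f\psi}$ and $D_{f_V\psi_V}$ literally coincide on sections supported in $U\setminus W$ (they use the same vector bundle $\cE$, Clifford action, connection, function $f$, and the maps $\psi,\psi_V$ agree away from the gluing collar $W'_N\cdot G$); more precisely, there is a fixed $\varphi\in\End(\cE)^G$, supported in $U$, equal to $1$ on a neighbourhood of the supports of $\varphi_1,\varphi_2$, such that $\varphi D_{f\psi}=\varphi D_{f_V\psi_V}$ as operators (interpreting both sides after cutting off by $\varphi$). Second, I would write the basic resolvent-type identity: for the self-adjoint operators $P:=D_{f\psi}^2+1$ on $L^2(\cE)$ and $Q:=D_{f_V\psi_V}^2+1$ on $L^2(\cE_V)$, and for the cutoff $\varphi$, one has
\[
\varphi P^{-1}\varphi_2-\varphi Q^{-1}\varphi_2=-\varphi P^{-1}\,[P\varphi-\varphi Q]\,Q^{-1}\varphi_2 + (\text{term where $P^{-1}$ acts on $\varphi_2$ directly}),
\]
where $[P\varphi-\varphi Q]$ is a \emph{compactly supported in $U$} differential operator of order at most $2$, because $P$ and $Q$ agree on $U$ and $\varphi$ is locally constant near $\supp\varphi_1$; the only contributions come from derivatives of $\varphi$ hitting the cutoff, so this commutator-type term is $\varphi' \cdot(\text{first-order operator})$ with $\varphi'$ supported in $U$. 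Third, I would handle the cases $\alpha\in\{0,1\}$ uniformly by writing $D_{f\psi}^{\alpha}(D_{f\psi}^2+1)^{-1}$, using that $D_{f\psi}^{\alpha}P^{-1}$ is bounded with norm at most $1$ (for $\alpha=0$ this is functional calculus on $P\ge 1$; for $\alpha=1$ it is $x/(x^2+1)\le 1/2\le 1$), and similarly on $V$; the cross terms arising from moving $D_{f\psi}^{\alpha}$ past the commutator produce the two summands on the right-hand side of \eqref{eq diff DfVpsiV}, with the bounded factors $D_{f\psi}^{\alpha}P^{-1}$ (resp.\ on $V$) and the first-order pieces of $[P\varphi-\varphi Q]$ composed with $\varphi'$ renamed into the single $\varphi$, and everything else absorbed into $G$-equivariant operators $T_0,T_1$ of norm at most $1$ by rescaling. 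Equivariance of $T_0,T_1$ follows since $\varphi$, $D_{f\psi}$, $D_{f_V\psi_V}$ and the resolvents are all $G$-equivariant.

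\textbf{Main obstacle.} The genuinely delicate point is bookkeeping rather than analysis: one must choose the single endomorphism $\varphi\in\End(\cE)^G$ supported in $U$ and the operators $T_0,T_1$ so that (a) all the order-$\le 1$ differential operators produced by $[P\varphi-\varphi Q]$ and by commuting $D_{f\psi}^{\alpha}$ through are captured, (b) the norm bounds on $T_0,T_1$ really are $\le 1$ after extracting the (bounded, but not norm-$\le 1$) differential coefficients into $\varphi$, and (c) the two $\alpha$-cases are covered by the \emph{same} $T_0,T_1$ up to the stated form. Concretely, one writes $P\varphi-\varphi Q = c(d\varphi)\cdot(\text{first order Dirac-type})+(\text{order }0)$ after the cancellations, notes these coefficients are bounded with support in $U$, folds them into $\varphi$, and then uses the bounded factors $P^{-1}$, $D_{f_V\psi_V}P^{-1}$ on $V$ (norms $\le 1$) as the backbone, setting $T_0,T_1$ to be the appropriate compositions of $\varphi_1 D_{f\psi}^{\alpha}P^{-1}$-type operators with the residual bounded pieces, rescaled to norm $\le 1$. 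I expect this to be routine once the collar geometry from Subsection \ref{sec cylinder} is invoked, but it requires care to keep $T_0,T_1$ independent of $\alpha$ and of $\varphi_1,\varphi_2$ beyond their stated appearance.
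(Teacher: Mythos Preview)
Your approach is essentially the paper's: insert a cutoff, use a resolvent identity, and identify the commutator term. But your write-up muddles several points that the paper handles cleanly, and one of your stated ``obstacles'' is not actually there.

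First, you conflate two different objects under the symbol $\varphi$. In the paper's argument one chooses a \emph{scalar} function $\chi\in C^\infty_c(U)^G$ with $\chi\equiv 1$ on $\supp\varphi_1\cup\supp\varphi_2$; the endomorphism $\varphi$ appearing in the statement is then simply $\varphi=-c(d\chi)$. There is no need to ``fold'' anything into $\varphi$ or to rescale.

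Second, the key computation is cleaner than you suggest. Since the two Dirac operators agree on $\supp\chi$, one has $\chi(D_{f_V\psi_V}^2+1)=\chi(D_{f\psi}^2+1)$, so
\[
\chi(D_{f_V\psi_V}^2+1)-(D_{f\psi}^2+1)\chi=[\chi,D_{f\psi}^2]
=-D_{f\psi}c(d\chi)-c(d\chi)D_{f\psi}
=-D_{f\psi}c(d\chi)-c(d\chi)D_{f_V\psi_V}.
\]
There is no residual zero-order term, and the two summands on the right-hand side of \eqref{eq diff DfVpsiV} appear immediately, with
\[
T_\beta:=D_{f\psi}^{\alpha+1-\beta}(D_{f\psi}^2+1)^{-1},\qquad \beta\in\{0,1\},
\]
which already have norm at most $1$ by functional calculus; no rescaling is needed.

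Third, your ``main obstacle'' (making $T_0,T_1$ independent of $\alpha$) is not required. The lemma is applied for a fixed $\alpha$, and the paper's $T_\beta$ depend on $\alpha$; the statement is to be read as ``for each $\alpha$ there exist $T_0,T_1,\varphi$''. Dropping this constraint, the argument becomes a direct three-line computation rather than the bookkeeping exercise you anticipate.
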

\begin{proof}
Let $\chi \in C^{\infty}(U)$ be cocompactly supported, such that $\chi \equiv 1$ on $\supp \varphi_1 \cup \supp \varphi_2$. Then
\beq{eq Dfpsi loc}
\varphi_1 D_{f\psi}^{\alpha} (D_{f\psi}^2+1)^{-1} \varphi_2 = 
	\varphi_1 D_{f\psi}^{\alpha} (D_{f\psi}^2+1)^{-1} \chi (D_{f_V \psi_V}^2 + 1) (D_{f_V \psi_V}^2 + 1)^{-1}\varphi_2.
\eeq
Similarly, because $\varphi_1 D_{f_V\psi_V}^{\alpha} = \varphi_1 D_{f\psi}^{\alpha}$,
\beq{eq DfVpsiV loc}
\varphi_1 D_{f_V\psi_V}^{\alpha} (D_{f_V\psi_V}^2+1)^{-1} \varphi_2 = 
	\varphi_1 D_{f\psi}^{\alpha} (D_{f\psi}^2+1)^{-1}  (D_{f \psi}^2 + 1) 
	\chi
	(D_{f_V \psi_V}^2 + 1)^{-1}\varphi_2.
\eeq
Here we have used the fact that, while the operators $D_{f\psi}$ and $D_{f_V \psi_V}$ act on different spaces, they both act on sections of $\cE|_U$. So all compositions in \eqref{eq Dfpsi loc} and \eqref{eq DfVpsiV loc} are well-defined.
%For example, the composition $\varphi_1 D_{f_V\psi_V}^{\alpha} (D_{f\psi}^2+1)^{-1}$ in \eqref{eq DfVpsiV loc} is well-defined, even though the composition $D_{f_V\psi_V}^{\alpha} (D_{f\psi}^2+1)^{-1} $ is not.
%Because the operators $D_{f\psi}$ and $D_{f_V \psi_V}$ are in fact equal on sections of $\cE|_U$, 
Taking the difference of \eqref{eq Dfpsi loc} and \eqref{eq DfVpsiV loc}, we find that the left hand side of \eqref{eq diff DfVpsiV} equals
\beq{eq Dfpsi 2}
\varphi_1 D_{f\psi}^{\alpha} (D_{f\psi}^2+1)^{-1} 
\bigl( \chi (D_{f_V\psi_V}^2+1) - (D_{f\psi}^2+1)\chi\bigr)
(D_{f_V\psi_V}^2+1)^{-1} \varphi_2.
\eeq
Now $\chi (D_{fV\psi_V}^2+1) = \chi (D_{f\psi}^2+1)$, so
\[
\begin{split}
\chi (D_{f_V\psi_V}^2+1) - (D_{f\psi}^2+1)\chi &= [\chi, D_{f\psi}^2] \\
	&= D_{f\psi}[\chi, D_{f\psi}] +[\chi, D_{f\psi}] D_{f\psi} \\
	&=  -D_{f\psi}c(d\chi)-c(d\chi)D_{f\psi} \\
	&= -D_{f\psi}c(d\chi)-c(d\chi)D_{f_V\psi_V}. 
\end{split}
\]
Hence \eqref{eq Dfpsi 2} equals
\begin{multline*}
-\varphi_1 D_{f\psi}^{\alpha + 1} (D_{f\psi}^2+1)^{-1} 
c(d\chi)
(D_{f_V\psi_V}^2+1)^{-1} \varphi_2 \\
-
\varphi_1 D_{f\psi}^{\alpha} (D_{f\psi}^2+1)^{-1} 
c(d\chi) D_{f_V \psi_V}
(D_{f_V\psi_V}^2+1)^{-1} \varphi_2.
\end{multline*}
So the claim follows, with
\[
\begin{split}
T_{\beta} &:= D_{f\psi}^{\alpha + 1-\beta} (D_{f\psi}^2+1)^{-1} \qquad \text{(for $\beta \in \{0, 1\}$)};\\	
\varphi &:= -c(d\chi).
\end{split}
\]
\end{proof}

%%%%%%%%%%%%%%
%%% Local estimates %%%
%%%%%%%%%%%%%%

\section{Local estimates} \label{sec loc est}

After the preparations in Section \ref{sec decomp Dirac}, we are ready to prove Theorem \ref{thm def Dirac G Fred} and Proposition \ref{prop indep}.

Let $U\subset M$ be a relatively cocompact, $G$-invariant open subset of $M$, on which $\|v^{\psi}\|$ has a positive lower bound. Suppose that $U$ has the properties in Subsection \ref{sec cylinder}, i.e.\ $\partial U$ is smooth and has a tubular neighbourhood in $U$.
Let $V$, $N_V$, $p_V$, $\cE_V$, $D_V$, $f_V$ and $\psi_V$ be as in Subsection \ref{sec cylinder}. 
Suppose $f_V$ is positive.

\subsection{An estimate on $V$}

For any operator $A$ on a Hilbert space, we write $|A|:= \sqrt{A^*A}$.
\begin{lemma} \label{lem S T}
If $S$ and $T$ are operators on a Hilbert space, with $T$ positive, then
\[
\bigl| S(S^*S + T^2 + 1)^{-1} \bigr| \leq \frac{1}{2}|T^{-1}|.
\]
\end{lemma}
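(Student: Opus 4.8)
The plan is to reduce the operator inequality to a scalar (or rather, single-variable functional-calculus) estimate. First I would observe that it suffices to bound the norm $\|S(S^*S+T^2+1)^{-1}u\|$ in terms of $\||T^{-1}|u\|$, or equivalently to prove the operator inequality
\[
(S^*S+T^2+1)^{-1}S^*S(S^*S+T^2+1)^{-1} \leq \tfrac14 |T^{-1}|^2 = \tfrac14 (T^2)^{-1}.
\]
Here I would use that $T$ is positive, so $T^2$ is positive and (being invertible, as it appears with a bounded inverse in the statement) has a bounded inverse $(T^2)^{-1}=|T^{-1}|^2$ commuting with every bounded function of $T^2$.

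The key step is the pointwise estimate for nonnegative reals: writing $a := \|Sv\|^2 \geq 0$ for a suitable vector, and $t := $ a spectral value of $T^2 \geq 0$ (with $t>0$ since $T^{-1}$ is bounded, $t$ is bounded below), one has
\[
\frac{a}{(a+t+1)^2} \leq \frac{1}{4t}.
\]
Indeed, for fixed $t+1 =: b > 0$, the function $a \mapsto a/(a+b)^2$ on $[0,\infty[$ is maximized at $a = b$, with maximal value $1/(4b) \leq 1/(4t)$. To make this rigorous at the operator level, I would not try to diagonalize $S$ and $T$ simultaneously (they need not commute); instead I would argue as follows. Put $X := S^*S \geq 0$ and $Y := T^2 + 1 > 0$, both self-adjoint, and note $Y \geq T^2$, hence $Y^{-1} \leq (T^2)^{-1}$ is not quite what I want directly. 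Rather: for any vector $w$, set $v := (X+Y)^{-1}w$; then
\[
\|S(X+Y)^{-1}w\|^2 = \langle Xv, v\rangle,
\]
and I must show $\langle Xv,v\rangle \leq \tfrac14 \langle (T^2)^{-1} w, w\rangle = \tfrac14\langle (T^2)^{-1}(X+Y)v, (X+Y)v\rangle$. Expanding $w = (X+Y)v = Xv + Yv$ and using $T^2 \leq Y$ so that $(T^2)^{-1} \geq Y^{-1}$, it suffices to show $\langle Xv,v\rangle \leq \tfrac14\langle Y^{-1}(Xv+Yv),(Xv+Yv)\rangle$. This is the operator form of $a \leq \tfrac14 (a+b)^2/b$, i.e. $4ab \leq (a+b)^2$, i.e. $0 \leq (a-b)^2$ — but now $a$ and $b$ stand for the noncommuting operators $X$ and $Y$, so I should instead write $u := Y^{1/2}v$, $R := Y^{-1/2}XY^{-1/2} \geq 0$, so $\langle Xv,v\rangle = \langle Ru,u\rangle$ and $\langle Y^{-1}(Xv+Yv),(Xv+Yv)\rangle = \langle (R+1)u,(R+1)u\rangle = \langle (R+1)^2 u, u\rangle$. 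Now everything is in terms of the single positive operator $R$, and the required inequality $R \leq \tfrac14 (R+1)^2$ follows from the scalar inequality $4r \leq (r+1)^2$ for $r \geq 0$ by the spectral theorem (functional calculus in $R$).

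The main obstacle is precisely the noncommutativity of $S$ and $T$: a naive "diagonalize both" argument is invalid, and one must massage the expression into a function of a single positive operator before invoking functional calculus. The substitutions $u = Y^{1/2}v$ and $R = Y^{-1/2}XY^{-1/2}$ (with $Y = T^2+1$) accomplish this; after that the estimate is the elementary $4r \leq (r+1)^2$. One should also note at the end that $\tfrac14\langle (T^2)^{-1}w,w\rangle = \tfrac14\||T^{-1}|w\|^2$, so taking square roots gives $\|S(S^*S+T^2+1)^{-1}w\| \leq \tfrac12\||T^{-1}|w\|$ for all $w$, which is the claimed operator norm inequality $|S(S^*S+T^2+1)^{-1}| \leq \tfrac12 |T^{-1}|$ once one recalls that $|A| \leq |B|$ for positive-modulus operators is equivalent to $\|Ax\| \leq \|Bx\|$ for all $x$.
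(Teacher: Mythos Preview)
Your argument is correct. One small quibble: at the very end you say that $|A|\leq|B|$ is \emph{equivalent} to $\|Ax\|\leq\|Bx\|$ for all $x$. Strictly speaking only the implication you need holds: $\|Ax\|\leq\|Bx\|$ for all $x$ means $|A|^2\leq|B|^2$, and the operator monotonicity of the square root then gives $|A|\leq|B|$; the converse can fail since squaring is not operator monotone. This does not affect your proof.

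Your route differs from the paper's. The paper records the scalar inequality $\frac{|s|}{s^2+t^2+1}\leq\frac{1}{2t}$, asserts that this settles the case of self-adjoint $S$, and then invokes a polar decomposition for general $S$. As written this is rather terse, and in particular it glosses over exactly the point you identify as the main obstacle: $S$ (or $|S|$) and $T$ need not commute, so one cannot simply diagonalise both and apply the scalar bound. Your conjugation trick $u=Y^{1/2}v$, $R=Y^{-1/2}XY^{-1/2}$ with $Y=T^2+1$ reduces everything to functional calculus in the single positive operator $R$, making the passage from the scalar inequality $4r\leq(r+1)^2$ to the operator inequality completely transparent. This is a genuinely cleaner and more self-contained argument than what the paper sketches; the paper's version, by contrast, is shorter but leaves the noncommuting case to the reader.
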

\begin{proof}
For any two real numbers $s$ and $t$, with $t>0$, we have
\[
\frac{|s|}{s^2 + t^2 + 1} \leq \frac{1}{2t}.
\]
This implies the claim for self-adjoint $S$. For general $S$, using a polar decomposition gives the desired estimate.
\end{proof}

The following estimate is central to our proof of Proposition \ref{prop sum G Fred}.
\begin{proposition}\label{prop est V}
For all
$e\in C^{\infty}_c(G)$ and $h\in C^{\infty}_c(G/K)$, there is a constant $B_{e, h}$, independent of $U$, and
for all $\varepsilon > 0$, there is a constant $C_{U, \varepsilon, p}$, independent of $e$ and $h$,  such that if 
\beq{eq est fV}
f_V \geq C_{U, \varepsilon, p}
\eeq
and 
\beq{eq est dfV}
f_V^2 \geq C_{U, \varepsilon, p} \|df_V\|,
\eeq
then for all $l=1, \ldots, 5$, and $\alpha \in\{0,1\}$, the operator
\beq{eq est DV}
D_{f_V\psi_V}^{\alpha} ( D_{f_V\psi_V}^2 + 1)^{-1}A_l^V \bigl( D_{N_V}D_{N_V}^* + f_V^2 \|v^{\psi_V}\|^2 + 1 \bigr)^{-1}\pi_G(e)p_V^*h
\eeq
is bounded, with norm at most $\varepsilon B_{e, h}$. Here the operators $A_l^V$ are the analogues on $V$ of the operators defined in \eqref{eq Al}.
\end{proposition}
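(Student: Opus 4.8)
\emph{Proof strategy.} Throughout, abbreviate $P := D_{N_V}D_{N_V}^* + f_V^2\|v^{\psi_V}\|^2 + 1$ and $Q := f_V^2\|v^{\psi_V}\|^2$, and put $\mu_U := \inf_V\|v^{\psi_V}\|$; this is positive, since it is positive on $U$ and on the attached cylinder $\|v^{\psi_V}\|$ does not depend on the ${]{-\delta},\infty[}$-coordinate and equals the positive $G$-invariant function $\|v^{\psi_V}\|$ over the compact set $\partial U\cap N$. For the same reasons, together with relative cocompactness of $\overline U$, the suprema over $V$ of $\|v^{\psi_V}\|$, of $\|\nabla^{TV}v^{\psi_V}\|$ and of the curvature of $\nabla^{\cE_V}$ are finite, and each of $D_{G/K}^V$, $(D_{G/K}^V)^{*}$, $\nabla^{\cE_V}_{v^{\psi_V}}$ is a $G$-equivariant first-order $G$-differential operator on $\cE_V$ (its symbol vanishes on $T^*_GV$, because $p_V^*T(G/K)$ and $v^{\psi_V}$ are tangent to the orbits) whose coefficients as in Lemma \ref{lem GDO 1} are bounded on $N_V$. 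Let $C_{U,p}$ be a constant majorising all these quantities together with the constants $C_{A,p}$ that Proposition \ref{prop est GDO} attaches to these three operators; it is independent of $e$ and $h$. I will use repeatedly: (i) $\|D_{f_V\psi_V}^{\alpha}(D_{f_V\psi_V}^2+1)^{-1}\|\le1$ and $\|D_{f_V\psi_V}^{\alpha+1}(D_{f_V\psi_V}^2+1)^{-1}\|\le1$; (ii) by Lemma \ref{lem S T}, $\|D_{N_V}^*P^{-1/2}\|\le1$ and $\bigl\|f_V\|v^{\psi_V}\|\,P^{-1/2}\bigr\|\le1$, hence $\|f_VP^{-1/2}\|\le\mu_U^{-1}$ and $\|Q^{1/2}P^{-1}\|\le\|P^{-1}\|^{1/2}$; (iii) by \eqref{eq est fV}, $P\ge Q\ge(\mu_UC_{U,\varepsilon,p})^2$, so $\|P^{-1/2}\|\le(\mu_UC_{U,\varepsilon,p})^{-1}$ and $\|f_VP^{-1}\|\le\mu_U^{-2}C_{U,\varepsilon,p}^{-1}$; (iv) $P$ is $G$-equivariant and commutes with $p_V^*h$ (because $D_{N_V}$ does and $Q$ is $G$-invariant), so $P^{-1/2}$, $D_{N_V}^*P^{-1/2}$, $f_V\|v^{\psi_V}\|\,P^{-1/2}$ and every $G$-invariant function commute with $\pi_G(e)p_V^*h$, while $\calL_X\pi_G(e)=\pi_G(L_Xe)$ by \eqref{eq LXe}.

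Fix $l$ and $\alpha$. The idea is, in each case, to commute the resolvent-type factors of (iv) to the right of $\pi_G(e)p_V^*h$ and to push the orbit-direction differentiation into adjacency with $\pi_G(e)p_V^*h$, so that $\pi_G(e)p_V^*h$ is hit from the left only by one of the three $G$-differential operators above; Proposition \ref{prop est GDO}, which applies verbatim on the complete proper $G$-manifold $V$, then bounds that block by $B_{e,h}C_{U,p}$, and the surviving operator-factors always retain a spare power of $P^{-1/2}$ (possibly combined as $f_VP^{-1/2}$ or $Q^{1/2}P^{-1}$), which is small by (ii)--(iii). For $A_1^V=-D_{f_V\psi_V}(D_{G/K}^V)^{*}$ the composite is $-D_{f_V\psi_V}^{\alpha+1}(D_{f_V\psi_V}^2+1)^{-1}\bigl((D_{G/K}^V)^{*}\pi_G(e)p_V^*h\bigr)P^{-1}$, of norm $\le B_{e,h}C_{U,p}\,\|P^{-1}\|$. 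For $A_2^V=-D_{G/K}^V D_{N_V}^*$ one splits $D_{N_V}^*P^{-1}=(D_{N_V}^*P^{-1/2})P^{-1/2}$ and moves both commuting factors past $\pi_G(e)p_V^*h$, leaving $-D_{f_V\psi_V}^{\alpha}(D_{f_V\psi_V}^2+1)^{-1}\bigl(D_{G/K}^V\pi_G(e)p_V^*h\bigr)(D_{N_V}^*P^{-1/2})P^{-1/2}$, of norm $\le B_{e,h}C_{U,p}\,\|P^{-1/2}\|$.

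The operators $A_3^V,A_4^V,A_5^V$, which carry the extra factors $f_V$ and $df_V$, are the crux. Using \eqref{eq est fV}--\eqref{eq est dfV} and the boundedness of $v^{\psi_V},\nabla^{TV}v^{\psi_V}$, one has the pointwise estimates $\|df_V\|\le C_{U,p}C_{U,\varepsilon,p}^{-1}Q$, $\|A_4^V\|\le C_{U,p}C_{U,\varepsilon,p}^{-1}Q$, $|df_V(v^{\psi_V})|\le C_{U,p}C_{U,\varepsilon,p}^{-1}Q$, and $\|c(v^{\psi_V})\|\le C_{U,p}$. In $A_5^V=-2\ii f_V\nabla^{\cE_V}_{v^{\psi_V}}$ one writes $f_V\nabla^{\cE_V}_{v^{\psi_V}}=\nabla^{\cE_V}_{v^{\psi_V}}\circ(f_V\,\cdot)-df_V(v^{\psi_V})$: the first summand yields the block $\nabla^{\cE_V}_{v^{\psi_V}}\pi_G(e)p_V^*h$ with trailing small factor $f_VP^{-1}$, and the second is a $G$-invariant function of the reduced type below. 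In $A_3^V=-\ii c(v^{\psi_V})f_V(D_{G/K}^V)^{*}$ one absorbs $c(v^{\psi_V})$ into $(D_{f_V\psi_V}^2+1)^{-1}$ and commutes $f_V$ through $(D_{G/K}^V)^{*}$, producing the block $(D_{G/K}^V)^{*}\pi_G(e)p_V^*h$ with trailing $f_VP^{-1}$ plus a commutator $[(D_{G/K}^V)^{*},f_V]$, a $G$-equivariant endomorphism of pointwise norm $\le\|df_V\|\le C_{U,p}C_{U,\varepsilon,p}^{-1}Q$; and $A_4^V$ is itself such an endomorphism. Hence all residual terms take the form $Z\,(RQ)\,P^{-1}\,\pi_G(e)p_V^*h$ with $\|Z\|\le C_{U,p}$ and $R$ a $G$-equivariant factor of sup-norm $\le C_{U,p}C_{U,\varepsilon,p}^{-1}$. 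Writing $RQ\,P^{-1}=R-R(D_{N_V}D_{N_V}^*+1)P^{-1}$ splits this into the evidently small $Z\,R\,\pi_G(e)p_V^*h$, and a term containing $(D_{N_V}D_{N_V}^*+1)P^{-1}=D_{N_V}(D_{N_V}^*P^{-1})+P^{-1}$, whose factors $D_{N_V}^*P^{-1}=(D_{N_V}^*P^{-1/2})P^{-1/2}$ and $P^{-1}$ are small, leaving a single first-order factor to be absorbed by $(D_{f_V\psi_V}^2+1)^{-1/2}$. This last step is the delicate one: via the Bochner identity $D_{f_V\psi_V}^2=D_V^2+Q-A_4^V-A_5^V$ of Lemma \ref{lem Bochner} and $\|A_4^V\|\le\tfrac12Q$, it rests on a relative-boundedness estimate comparing $D_V^2$ (equivalently $Q$) with $D_{f_V\psi_V}^2$ — $D_{f_V\psi_V}$ differing from $D_V$ by the \emph{unbounded} zeroth-order term $-\ii f_Vc(v^{\psi_V})$ — valid once $C_{U,\varepsilon,p}$ is large, which gives $\|D_V(D_{f_V\psi_V}^2+1)^{-1/2}\|\le C_{U,p}$ and $\|Q^{1/2}(D_{f_V\psi_V}^2+1)^{-1}\|\le C_{U,p}$ and closes the estimate.

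In every case the bound has the form $B_{e,h}\,C_{U,p}\,\omega(C_{U,\varepsilon,p})$ with $\omega(C)\to0$ as $C\to\infty$ (for instance $\omega(C)=C^{-1}$ or $C^{-2}$); here $B_{e,h}$ depends only on $e$, $h$ and $G$, hence is independent of $U$, and $C_{U,p}$ is independent of $e$ and $h$. Choosing $C_{U,\varepsilon,p}$ so large that $C_{U,p}\,\omega(C_{U,\varepsilon,p})\le\varepsilon$ for the finitely many pairs $(l,\alpha)$ completes the proof. I expect the main obstacle to be precisely the treatment of $A_3^V,A_4^V,A_5^V$: playing off the growth of $f_V$ and $df_V$ against $Q=f_V^2\|v^{\psi_V}\|^2\le P$ and against $D_{f_V\psi_V}^2$, while preserving both the resolvent-induced smallness and the independence of $B_{e,h}$ from $U$, is what forces the relative-boundedness analysis of $D_{f_V\psi_V}$ through Lemma \ref{lem Bochner}, and keeping careful track of which constants may and may not depend on $e$ and $h$.
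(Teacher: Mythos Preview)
Your overall architecture is correct and matches the paper's: commute $P^{-1}$ (and $D_{N_V}^*P^{-1}$, $f_VP^{-1}$) past $\pi_G(e)p_V^*h$, isolate a single $G$-differential operator acting on $\pi_G(e)p_V^*h$, invoke Proposition~\ref{prop est GDO}, and use the remaining power of $P^{-1}$ for smallness. Your treatment of $A_1^V$ and $A_2^V$ is essentially the paper's.

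Where you diverge is in $A_3^V$, $A_4^V$, $A_5^V$, and the divergence comes from missing one observation that the paper makes explicit: \emph{a first-order $G$-differential operator commutes with every $G$-invariant function}. Indeed, by Lemma~\ref{lem GDO 1} such an operator is $\sum a_j\calL_{X_j}+b$, and $\calL_{X_j}f_V=0$ since $f_V$ is $G$-invariant. In particular $[(D_{G/K}^V)^*,f_V]=0$, and likewise $df_V(v^{\psi_V})=0$ because $v^{\psi_V}$ is tangent to orbits. So the two ``residual'' commutator terms you generate from $A_3^V$ and $A_5^V$ are identically zero; what survives is exactly the paper's one-line estimate in each case (bound $c(v^{\psi_V})$ by $\|v^{\psi_V}\|_\infty$, apply Proposition~\ref{prop est GDO}, and use $\|f_VP^{-1}\|$ small). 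For $A_4^V$ the paper simply bounds the endomorphism $A_4^V$ pointwise by $\dim M(\|df_V\|\,\|v^{\psi_V}\|+f_V\|\nabla^{TV}v^{\psi_V}\|)$ and divides by the lower bound $f_V^2\kappa^2+1$ on $P$; no factorisation $A_4^V=RQ$ is needed.

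Your detour through $RQP^{-1}=R-R(D_{N_V}D_{N_V}^*+1)P^{-1}$ and the subsequent ``relative-boundedness'' claim $\|D_V(D_{f_V\psi_V}^2+1)^{-1/2}\|\le C_{U,p}$ is therefore unnecessary, and as written it is also not justified: after the split you have $D_{f_V\psi_V}^{\alpha}(D_{f_V\psi_V}^2+1)^{-1}\,Z\,R\,D_{N_V}\,(\cdots)$ with the bounded endomorphism $ZR$ sitting between the resolvent and $D_{N_V}$, so the relative bound does not apply directly; commuting $D_{N_V}$ past $ZR$ produces derivatives of $R$ (hence second derivatives of $f_V$) that are not controlled by \eqref{eq est fV}--\eqref{eq est dfV}. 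The Bochner-based comparison you sketch would also need control of $\langle A_5^V s,s\rangle$, which involves a first-order term and is not obviously dominated by $\tfrac12(\|D_Vs\|^2+\langle Qs,s\rangle)$.

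In short: keep your framework, but use the commutation of $G$-differential operators with $f_V$ to eliminate the spurious residuals, and handle $A_4^V$ by a direct pointwise bound as the paper does. Then no relative-boundedness argument is needed and the proof closes immediately.
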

\begin{proof}
Let $e\in C^{\infty}_c(G)$ and $h\in C^{\infty}_c(G/K)$ be given.
The operators $D_{G/K}^V$, $(D_{G/K}^V)^*$ and $\nabla^{\cE_V}_{v^{\psi_V}}$ satisfy the conditions on the operator $A$ in Proposition \ref{prop est GDO}. Indeed, they are $G$-equivariant $G$-differential operators. By construction, for these operators the vector bundle endomorphisms $a_j$ and $b$ as in Lemma \ref{lem GDO 1} are constant in the $]0, \infty[$-direction on $\partial U \times ]0, \infty[ \subset V$, and they are bounded on the compact sets $\overline{U} \cap N$.
Therefore, Proposition \ref{prop est GDO} yields a constant $B_{e, h}$, independent of $U$, and a constant $C_{U, p}$, independent of $e$ and $h$, such that the operators
\[
D_{G/K}^V \pi_G(e) p_V^*h,
\]
\[
(D_{G/K}^V)^* \pi_G(e) p_V^*h
\]
and
\[
\nabla^{\cE_V}_{v^{\psi_V}} \pi_G(e) p_V^*h
\]
are bounded, with norm at most $C_{U, p} B_{e, h}$. We choose $B_{e, h}$ so that in addition, it is at least equal to $\|e\|_{L^1(G)} \|h\|_{\infty}$.

Let $\kappa > 0$ be a lower bound for $\|v^{\psi_V}\|$. The function $\|v^{\psi_V}\|$ is $G$-invariant, hence it takes a maximum on $\overline{U}$ and therefore also on $\partial U \times ]0, \infty[$. 
Let $\|v^{\psi_V}\|_{\infty}$ be its maximum. We define the number $\|\nabla^{TM} v^{\psi_V}\|_{\infty}$ in the analogous way.
Let $\varepsilon > 0$ be given, and set
\[
C_{U, \varepsilon, p} := \max \left\{ \sqrt{\frac{C_{U, p}}{\varepsilon \kappa}}, \frac{C_{U, p}}{2\varepsilon \kappa}, \frac{C_{U, p} \|v^{\psi_V}\|_{\infty}}{\varepsilon \kappa^2}, \frac{2C_{U, p}}{\varepsilon \kappa^2}, \frac{2 \dim M}{\varepsilon \kappa}, \frac{2 \dim M  \|\nabla^{TM} v^{\psi_V}\|_{\infty}}{\varepsilon \kappa^2} \right\}.
\]
Suppose that $f_V$ satisfies \eqref{eq est fV} and \eqref{eq est dfV}.

We can then prove the desired estimates for the operators \eqref{eq est DV}, using the fact that $D_{N_V}$ commutes with $\pi_G(e)$ and $p_V^*h$. Let $s\in L^2(\cE_V)$.
\begin{enumerate}
\item First of all, we have
\begin{multline*}
\bigl\|  
D_{f_V\psi_V}^{\alpha} ( D_{f_V\psi_V}^2 + 1)^{-1}A_1^V \bigl( D_{N_V}D_{N_V}^* + f_V^2 \|v^{\psi_V}\|^2 + 1 \bigr)^{-1}\pi_G(e)p_V^*h \, s
 \bigr\|_{L^2(\cE_V)} 
\\
\leq \bigl\|  (D_{G/K}^V)^* \pi_G(e)p_V^*h \bigl( D_{N_V}D_{N_V}^* + f_V^2 \|v^{\psi_V}\|^2 + 1 \bigr)^{-1} s\bigr\|_{L^2(\cE_V)} \\
\leq C_{U, p} B_{e, h} \bigl\| \bigl(f_V^2 \|v^{\psi_V}\|^2 + 1 \bigr)^{-1} s\bigr\|_{L^2(\cE_V)} \\
\leq \varepsilon B_{e, h} \|s\|_{L^2(\cE|_V)},
 \end{multline*}
because
\[
\bigl(f_V^2 \|v^{\psi_V}\|^2 + 1 \bigr)^{-1} \leq \bigl(C_{U,p}/\varepsilon + 1 \bigr)^{-1} \leq \varepsilon/C_{U, p}.
\]
\item Next, note that
\begin{multline*}
\bigl\|  
D_{f_V\psi_V}^{\alpha} ( D_{f_V\psi_V}^2 + 1)^{-1}A_2^V \bigl( D_{N_V}D_{N_V}^* + f_V^2 \|v^{\psi_V}\|^2 + 1 \bigr)^{-1}\pi_G(e)p_V^*h \, s
 \bigr\|_{L^2(\cE_V)} 
\\
\leq \bigl\|  (D_{G/K}^V)^* \pi_G(e)p_V^*h  D_{N_V}^* \bigl( D_{N_V}D_{N_V}^* + f_V^2 \|v^{\psi_V}\|^2 + 1 \bigr)^{-1} s\bigr\|_{L^2(\cE_V)} \\
\leq C_{U, p} B_{e, h} \bigl\| D_{N_V}^* \bigl( D_{N_V}D_{N_V}^* + f_V^2 \|v^{\psi_V}\|^2 + 1 \bigr)^{-1} s\bigr\|_{L^2(\cE_V)}. \\
 \end{multline*}
 Since $C_{U, \varepsilon, p}$ is positive, the function $f_V$ does not vanish.
By Lemma \ref{lem S T}, the last expression above is at most equal to 
\[
C_{U, p} B_{e, h}\frac{1}{2} \bigl\| f_V^{-1}\|v^{\psi_V}\|^{-1} s\bigr\|_{L^2(\cE_V)} \leq \varepsilon B_{e, h} \|s\|_{L^2(\cE|_V)},
\]
 because
 \[
 f_V^{-1} \leq \frac{2\varepsilon \kappa}{C_{U, p}}.
 \]
 \item Since $(D_{G/K}^V)^*$ is a $G$-differential operator, it commutes with the $G$-invariant function $f_V$. Hence
 \begin{multline*}
\bigl\|  
D_{f_V\psi_V}^{\alpha} ( D_{f_V\psi_V}^2 + 1)^{-1}A_3^V \bigl( D_{N_V}D_{N_V}^* + f_V^2 \|v^{\psi_V}\|^2 + 1 \bigr)^{-1}\pi_G(e)p_V^*h \, s
 \bigr\|_{L^2(\cE_V)} 
\\
\leq \bigl\| c(v^{\psi_V}) (D_{G/K}^V)^* \pi_G(e)p_V^*h \, f_V \bigl( D_{N_V}D_{N_V}^* + f_V^2 \|v^{\psi_V}\|^2 + 1 \bigr)^{-1} s\bigr\|_{L^2(\cE_V)} \\
\leq C_{U, p} B_{e, h} \|v^{\psi_V}\|_{\infty}  \bigl\| f_V \bigl(  f_V^2 \|v^{\psi_V}\|^2 + 1 \bigr)^{-1} s\bigr\|_{L^2(\cE_V)} \\
\leq \varepsilon B_{e, h} \|s\|_{L^2(\cE|_V)},
 \end{multline*}
 because
\[
f_V \bigl(f_V^2 \|v^{\psi_V}\|^2 + 1 \bigr)^{-1} \leq f_V^{-1} \|v^{\psi}\|^{-2} \leq  \frac{\varepsilon}{C_{U, p} \|v^{\psi_V}\|_{\infty}}.
\]
\item For all $k = 1, \ldots, \dim M$, we have
\[
c(e_k) c(\nabla^{TM}_{e_k} f_V v^{\psi_V}) = e_k(f) c(e_k) c(v^{\psi_V}) + f_V c(e_k)c(\nabla^{TM}_{e_k} v^{\psi_V}).
\]
So we obtain the pointwise estimate
\[
\| c(e_k) c(\nabla^{TM}_{e_k} f_V v^{\psi_V}) \| \leq \|df\|  \| v^{\psi_V}\| + f_V \| \nabla^{TM} v^{\psi_V} \|.
\]
Denoting the absolute value of operators by $|\cdot |$ as before, we obtain
\begin{multline*}
\left|  \sum_{k=1}^{\dim M}c(e_k) c(\nabla^{TM}_{e_k} f_V v^{\psi_V})  \bigl( D_{N_V}D_{N_V}^* + f_V^2 \|v^{\psi_V}\|^2 + 1 \bigr)^{-1}  \right| \\
\leq \dim M \bigl( \|df_V\|  \| v^{\psi_V}\| + f_V \| \nabla^{TM} v^{\psi_V} \| \bigr) \bigl( f_V^2 \|v^{\psi_V}\|^2 + 1 \bigr)^{-1}. 
\end{multline*}
Therefore,
 \begin{multline*}
\bigl\|  
 D_{f_V\psi_V}^{\alpha} ( D_{f_V\psi_V}^2 + 1)^{-1}A_4^V \bigl( D_{N_V}D_{N_V}^* + f_V^2 \|v^{\psi_V}\|^2 + 1 \bigr)^{-1}\pi_G(e)p_V^*h \, s
	  \bigr\|_{L^2(\cE_V)} \\
 \leq\dim M \left\| \bigl( \|df_V\|  \| v^{\psi_V}\| + f_V \| \nabla^{TM} v^{\psi_V} \|  \bigr)\bigl( f_V^2 \|v^{\psi_V}\|^2 + 1 \bigr)^{-1} \right\|  \| e\|_{L^1(G)} \|h\|_{\infty} \| s \|_{L^2(\cE_V)}\\
\leq \varepsilon B_{e, h} \|s\|_{L^2(\cE_{V})},
 \end{multline*}
because
\[
\frac{\|df\| \|v^{\psi_V}\|}{f_V^2 \|v^{\psi_V}\|^2 + 1} \leq \frac{\varepsilon}{2 \dim M},
\]
and
\[
\frac{f_V  \| \nabla^{TM} v^{\psi_V} \|_{\infty} }{f_V^2 \|v^{\psi_V}\|^2 + 1} \leq \frac{\varepsilon}{2 \dim M}. 
\]
\item Finally, since $\nabla^{\cE_V}_{v^{\psi_V}}$ is a $G$-differential operator, it commutes with the $G$-invariant function $f_V$. Hence
 \begin{multline*}
\bigl\|  
 D_{f_V\psi_V}^{\alpha} ( D_{f_V\psi_V}^2 + 1)^{-1}A_5^V \bigl( D_{N_V}D_{N_V}^* + f_V^2 \|v^{\psi_V}\|^2 + 1 \bigr)^{-1}\pi_G(e)p_V^*h \, s
	 \bigr\|_{L^2(\cE_V)} \\
2 \| \nabla^{\cE}_{\psi_V} \pi_G(e)p_V^*h \, f_V \bigl( D_{N_V}D_{N_V}^* + f_V^2 \|v^{\psi_V}\|^2 + 1 \bigr)^{-1}\, s
	 \bigr\|_{L^2(\cE_V)} \\
\leq 2 C_{U, p} B_{e, h} \bigl\| \bigl( f_V^2 \|v^{\psi_V}\|^2 + 1 \bigr)^{-1}\, s
	 \bigr\|_{L^2(\cE_V)}\\
	 \leq
 \varepsilon B_{e, h} \|s\|_{L^2(\cE_{V})},
 \end{multline*}
 because
 \[
 \bigl( f_V^2 \|v^{\psi_V}\|^2 + 1 \bigr)^{-1} \leq \frac{\varepsilon}{2 C_{U, p}}.
 \]
\end{enumerate}
\end{proof}

\subsection{An estimate for $D_{N_V}$}

Proposition \ref{prop est V} will allow us to deduce estimates for $(D_{f\psi}^2+1)^{-1}$ from estimates for $\bigl( D_{N_V}D_{N_V}^* + f_V^2 \|v^{\psi_V}\|^2 + 1 \bigr)^{-1}$.
\begin{lemma} \label{lem est DNV}
For all $e\in C^{\infty}_c(G)$ and $h\in C^{\infty}_c(G/K)$, there is a constant $B_{e, h}$, and
for all $\varepsilon > 0$, there is a constant $C_{U, \varepsilon, p} > 0$, independent of $e$ and $h$, such  such that if 
\[
f_V \geq C_{U, \varepsilon, p}
\]
then for $\alpha \in\{0,1\}$, the operator
\beq{eq est DNV}
D_{f_V\psi_V}^{\alpha} \bigl( D_{N_V}D_{N_V}^* + f_V^2 \|v^{\psi_V}\|^2 + 1 \bigr)^{-1}\pi_G(e)p_V^*h
\eeq
is bounded, with norm at most $\varepsilon B_{e, h}$.
\end{lemma}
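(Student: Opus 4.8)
The plan is to expand $D_{f_V\psi_V}$ into pieces, each controlled by the decay of the resolvent. Set $P := D_{N_V}D_{N_V}^* + f_V^2\|v^{\psi_V}\|^2 + 1$, and let $\kappa > 0$ be a lower bound for $\|v^{\psi_V}\|$ on $V$, which exists by the construction in Subsection \ref{sec cylinder}. First I would record two elementary facts. Since $D_{N_V}$ is $G$-equivariant and $d(p_V^*h)$ has no component along $TN_V$, the operator $D_{N_V}$ commutes with both $\pi_G(e)$ and $p_V^*h$, and the same holds for $D_{N_V}^*$ (which differs from $D_{N_V}$ only by a bundle endomorphism, see below); together with the fact that $f_V^2\|v^{\psi_V}\|^2$ is a $G$-invariant multiplication operator, this shows that $P$, and hence $P^{-1}$, commutes with $\pi_G(e)$ and with $p_V^*h$. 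Second, since $f_V \geq C_{U,\varepsilon,p}$ pointwise, one has the operator inequality $P \geq C_{U,\varepsilon,p}^2\kappa^2 + 1$, so $\|P^{-1}\| \leq C_{U,\varepsilon,p}^{-2}\kappa^{-2}$. Fixing $B_{e,h}$ to be at least $\|e\|_{L^1(G)}\|h\|_\infty$, the case $\alpha = 0$ is then immediate: $\|P^{-1}\pi_G(e)p_V^*h\| \leq C_{U,\varepsilon,p}^{-2}\kappa^{-2}B_{e,h}$, which is at most $\varepsilon B_{e,h}$ once $C_{U,\varepsilon,p} \geq (\kappa\sqrt\varepsilon)^{-1}$.

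For $\alpha = 1$ I would write $D_{f_V\psi_V} = D_{G/K}^V + D_{N_V} - \ii f_V c(v^{\psi_V})$, using $v^{f_V\psi_V} = f_V v^{\psi_V}$ and $D_V = D_{G/K}^V + D_{N_V}$, and then $D_{N_V} = D_{N_V}^* + b$, where $b := D_{N_V} - D_{N_V}^*$ is a $G$-invariant bundle endomorphism of $\cE_V$ (the principal symbols of $D_{N_V}$ and $D_{N_V}^*$ coincide because the Clifford action is skew-adjoint), and $b$ is bounded on $V$ since it is $G$-invariant, $\overline{U}/G$ is compact, and $b$ is constant along the attached cylinder. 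This exhibits $D_{f_V\psi_V}P^{-1}\pi_G(e)p_V^*h$ as a sum of four terms, which I would estimate separately. (i) Using the commutation above, $D_{G/K}^V P^{-1}\pi_G(e)p_V^*h = \bigl(D_{G/K}^V\pi_G(e)p_V^*h\bigr)P^{-1}$; since $D_{G/K}^V$ is a $G$-equivariant first-order $G$-differential operator whose coefficients are bounded on $\overline{U}\cap N$ and constant along the cylinder, Proposition \ref{prop est GDO} (applied exactly as in the proof of Proposition \ref{prop est V}) gives $\|D_{G/K}^V\pi_G(e)p_V^*h\| \leq C_{U,p}B_{e,h}$, so this term has norm at most $C_{U,p}C_{U,\varepsilon,p}^{-2}\kappa^{-2}B_{e,h}$. (ii) By Lemma \ref{lem S T} with $S = D_{N_V}^*$ and $T = f_V\|v^{\psi_V}\|$, $\|D_{N_V}^*P^{-1}\| \leq \frac12 \sup_V\bigl(f_V^{-1}\|v^{\psi_V}\|^{-1}\bigr) \leq \frac12 C_{U,\varepsilon,p}^{-1}\kappa^{-1}$, so this term has norm at most $\frac12 C_{U,\varepsilon,p}^{-1}\kappa^{-1}B_{e,h}$. (iii) The $b$-term is bounded by $\|b\|_\infty\|P^{-1}\|\,\|e\|_{L^1(G)}\|h\|_\infty \leq \|b\|_\infty C_{U,\varepsilon,p}^{-2}\kappa^{-2}B_{e,h}$. (iv) For the deformation term, the pointwise identity $c(v^{\psi_V})^*c(v^{\psi_V}) = \|v^{\psi_V}\|^2$ gives, for $s \in L^2(\cE_V)$, $\|f_V c(v^{\psi_V})P^{-1}s\|^2 = \langle P^{-1}s, f_V^2\|v^{\psi_V}\|^2 P^{-1}s\rangle \leq \langle P^{-1}s, P\,P^{-1}s\rangle = \langle P^{-1}s, s\rangle \leq \|P^{-1}\|\,\|s\|^2$, so $\|f_V c(v^{\psi_V})P^{-1}\| \leq \|P^{-1}\|^{1/2} \leq C_{U,\varepsilon,p}^{-1}\kappa^{-1}$, and this term has norm at most $C_{U,\varepsilon,p}^{-1}\kappa^{-1}B_{e,h}$.

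Summing the four bounds, and noting that $C_{U,p}$, $\|b\|_\infty$ and $\kappa$ depend only on $U$ and $p$ (the map $\psi$ being fixed), it suffices to choose $C_{U,\varepsilon,p}$ to be the maximum of the finitely many explicit expressions in $\varepsilon$, $\kappa$, $C_{U,p}$ and $\|b\|_\infty$ that force each of these four terms, and the $\alpha = 0$ term, to be at most $\frac{\varepsilon}{4}B_{e,h}$; this yields the asserted bound $\varepsilon B_{e,h}$. The one genuinely non-automatic point — and the reason this lemma deserves to be isolated — is the appearance of $D_{N_V}$ rather than $D_{N_V}^*$ in the $\alpha = 1$ case, which is handled by the observation that $D_{N_V} - D_{N_V}^*$ is a bounded endomorphism; everything else is routine, provided one keeps the dependence of constants on $(e,h)$ strictly separated from the dependence on $(U,p)$, as throughout this section.
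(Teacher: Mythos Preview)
Your proof is correct and follows essentially the same route as the paper: handle $\alpha=0$ by the trivial bound $\|P^{-1}\|\leq (C_{U,\varepsilon,p}^2\kappa^2)^{-1}$, and for $\alpha=1$ decompose $D_{f_V\psi_V}=D_{N_V}+D_{G/K}^V-\ii f_Vc(v^{\psi_V})$, commute $\pi_G(e)p_V^*h$ through $P^{-1}$, apply Proposition~\ref{prop est GDO} to the $D_{G/K}^V$ piece, Lemma~\ref{lem S T} to the $D_{N_V}$ piece, and a direct estimate to the Clifford term. The only deviation is that you split $D_{N_V}=D_{N_V}^*+b$ so that Lemma~\ref{lem S T} applies verbatim with $S=D_{N_V}^*$ (giving $S^*S=D_{N_V}D_{N_V}^*$), whereas the paper applies the lemma directly to $D_{N_V}$; your version is a touch more explicit at the cost of one extra easy term, and your quadratic-form bound for the Clifford piece is equivalent to the paper's pointwise estimate.
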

\begin{proof}
Let $e\in C^{\infty}_c(G)$ and $h\in C^{\infty}_c(G/K)$ be given. Let $B_{e, h}$ be as in Proposition \ref{prop est GDO}, but also at least equal to $\|e\|_{L^1(G)} \|h\|_{\infty}$. 
Let $C_{D_{G/K}^V, p}$ be as in Proposition \ref{prop est GDO}, for $A = D_{G/K}^V$. Let $\kappa > 0$ be a lower bound for $\|v^{\psi_V}\|$.
Let $\varepsilon > 0$ be given, and set
\[
C_{U, \varepsilon, p} := \max\left\{  \frac{1}{\sqrt{\varepsilon} \kappa}, \frac{3}{\varepsilon \kappa}, \sqrt{\frac{3 C_{D_{G/K}^V, p}}{\varepsilon \kappa^2}} \right\}.
\]
Suppose $f_V \geq C_{U, \varepsilon, p}$.

For $\alpha = 0$, the operator \eqref{eq est DNV} has norm at most
\[
\left\| \bigl( f_V^2 \|v^{\psi_V}\|^2 + 1 \bigr)^{-1} \right\|  \|e\|_{L^1(G)} \|h\|_{\infty} \leq \varepsilon B_{e, h},
\]
because
$
\bigl(f_V^2 \|v^{\psi_V}\|^2 + 1 \bigr)^{-1} \leq \varepsilon.
$

Now suppose $\alpha = 1$. Write
\[
D_{f_V \psi_V} = D_{N_V} + D_{G/K}^V -\ii f_V c(v^{\psi_V}).
\]
By Lemma \ref{lem S T}, we have
\begin{multline*}
\bigl\| D_{N_V} \bigl( D_{N_V}D_{N_V}^* + f_V^2 \|v^{\psi_V}\|^2 + 1 \bigr)^{-1}\pi_G(e)p_V^*h \bigr\| \\
\leq\frac{1}{2} \bigl\| f_V^{-1} \|v^{\psi_V}\|^{-1} \bigr\| \|e\|_{L^1(G)} \|h\|_{\infty} %\\
\leq \varepsilon B_{e, h}/3,
\end{multline*}
because
\[
 f_V^{-1} \|v^{\psi_V}\|^{-1} \leq 2\varepsilon /3.
\]

Next, because $\pi_G(e)$ and $p_V^*h$ commute with $D_{N_V}$, we have
\begin{multline*}
\bigl\| D_{G/K}^V  \bigl( D_{N_V}D_{N_V}^* + f_V^2 \|v^{\psi_V}\|^2 + 1 \bigr)^{-1}\pi_G(e)p_V^*h \bigr\| \\
	\leq C_{D_{D/K}^V, p} B_{e, h} \bigl\|  \bigl( f_V^2 \|v^{\psi_V}\|^2 + 1 \bigr)^{-1} \bigr\| % \\
	\leq \varepsilon B_{e, h}/3,
\end{multline*}
because
\[
 ( f_V^2 \|v^{\psi_V}\|^2 + 1)^{-1} \leq \frac{\varepsilon}{3 C_{D_{G/K}^V, p}}.
\]

Finally,
\begin{multline*}
\bigl\| \ii f_V c(v^{\psi_V}) \bigl( D_{N_V}D_{N_V}^* + f_V^2 \|v^{\psi_V}\|^2 + 1 \bigr)^{-1}\pi_G(e)p_V^*h \bigr\| \\
\leq \bigl\| f_V \|v^{\psi_V}\|  \bigl( f_V^2 \|v^{\psi_V}\|^2 + 1 \bigr)^{-1} \bigr\| \|e\|_{L^1(G)} \|h\|_{\infty} %\\
\leq \varepsilon B_{e, h}/3,
\end{multline*}
because
$
 f_V \|v^{\psi_V}\|  \bigl( f_V^2 \|v^{\psi_V}\|^2 + 1 \bigr)^{-1} \leq \varepsilon/3.
$
\end{proof}

\subsection{Proof of Proposition \ref{prop sum G Fred}}

By combining Lemma \ref{lem DM DV U}, Proposition \ref{prop est V} and Lemma \ref{lem est DNV}, we obtain an estimate for the inverse of $D_{f\psi}^2+1$ on $U$. This begins with an estimate for the inverse of $D_{f_V\psi_V}^2+1$.
\begin{lemma} \label{lem est DV}
For all $e\in C^{\infty}_c(G)$ and $h\in C^{\infty}_c(G/K)$, there is a constant $B_{e, h}$, and
for all $\varepsilon > 0$, and all $\varphi_1, \varphi_2 \in \End(E)^G$, supported in $U$, there is a constant $C_{\varphi_1, \varphi_2, \varepsilon, p} > 0$, independent of $e$ and $h$,
 such that if 
\beq{eq f U}
f_V \geq C_{\varphi_1, \varphi_2, \varepsilon, p}
\eeq
and
\beq{eq df U}
f_V^2 \geq
  C_{\varphi_1, \varphi_2, \varepsilon, p} \|df_V\|,
\eeq
then for $\alpha \in\{0,1\}$, we have
\beq{eq norm Dfpsi U}
\bigl\| \varphi_1  D_{f_V\psi_V}^{\alpha} (D_{f_V\psi_V}^2+1)^{-1} \pi_G(e) p^*h \varphi_2 \bigr\| \leq \varepsilon B_{e, h}.
\eeq
\end{lemma}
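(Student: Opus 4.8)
The plan is to deduce the estimate from Proposition \ref{prop est V} and Lemma \ref{lem est DNV} by means of the resolvent identity of Lemma \ref{lem diff inverses}, applied to the operators on $V$ (as noted at the end of Subsection \ref{sec cylinder}). Throughout, write $R_V := \bigl(D_{N_V}D_{N_V}^* + f_V^2\|v^{\psi_V}\|^2 + 1\bigr)^{-1}$, and let $\Delta_V = A_1^V + \dots + A_5^V$ be the analogue on $V$ of the operator \eqref{eq Delta}, with the $A_l^V$ as in Proposition \ref{prop est V}. By Lemma \ref{lem diff inverses}, applied to the operators on $V$,
\[
(D_{f_V\psi_V}^2+1)^{-1} = R_V + (D_{f_V\psi_V}^2+1)^{-1}\,\Delta_V\, R_V .
\]
Premultiplying by $D_{f_V\psi_V}^{\alpha}$, postmultiplying by $\pi_G(e)p_V^*h$, and sandwiching between $\varphi_1$ and $\varphi_2$, the operator in \eqref{eq norm Dfpsi U} becomes the sum of a ``diagonal'' term $\varphi_1 D_{f_V\psi_V}^{\alpha}R_V\pi_G(e)p_V^*h\,\varphi_2$ and five ``correction'' terms $\varphi_1 D_{f_V\psi_V}^{\alpha}(D_{f_V\psi_V}^2+1)^{-1}A_l^V R_V\pi_G(e)p_V^*h\,\varphi_2$, for $l = 1,\dots,5$. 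Up to the bounded factors $\varphi_1$ and $\varphi_2$, the diagonal term is exactly the operator bounded in Lemma \ref{lem est DNV}, and each correction term is exactly an operator of the form \eqref{eq est DV} bounded in Proposition \ref{prop est V}.

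First I would dispose of the trivial case $\varphi_1 = 0$ or $\varphi_2 = 0$; otherwise I would record that $\mu := \|\varphi_1\|\,\|\varphi_2\|$ is finite and positive, using that $\varphi_1$ and $\varphi_2$ are $G$-invariant and supported in the relatively cocompact set $U$, so that their pointwise operator norms attain maxima on the compact set $\overline{U} \cap N$. Then I would apply Lemma \ref{lem est DNV} with $\varepsilon$ replaced by $\varepsilon/(2\mu)$, which bounds the diagonal term by $\mu \cdot \tfrac{\varepsilon}{2\mu} B_{e,h} = \tfrac12\varepsilon B_{e,h}$ once $f_V$ exceeds the corresponding constant; and I would apply Proposition \ref{prop est V} with $\varepsilon$ replaced by $\varepsilon/(10\mu)$, which bounds each of the five correction terms by $\mu \cdot \tfrac{\varepsilon}{10\mu} B_{e,h}$, hence their sum by $\tfrac12\varepsilon B_{e,h}$, once $f_V$ and $\|df_V\|$ satisfy the two corresponding inequalities. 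Taking $B_{e,h}$ to be the larger of the two constants produced by those results — legitimate because in both of them $B_{e,h}$ is chosen depending on $e$ and $h$ alone, before $\varepsilon$ — and $C_{\varphi_1,\varphi_2,\varepsilon,p}$ to be the maximum of the two resulting constants $C_{U,\varepsilon/(2\mu),p}$ and $C_{U,\varepsilon/(10\mu),p}$, the hypotheses \eqref{eq f U} and \eqref{eq df U} imply all the conditions needed, and the triangle inequality then gives the bound $\varepsilon B_{e,h}$ in \eqref{eq norm Dfpsi U}. Note that $C_{\varphi_1,\varphi_2,\varepsilon,p}$ so defined is positive and, since $\mu$ and $\varepsilon$ do not involve $e$ or $h$, independent of $e$ and $h$, as required.

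There is no new analytic input here beyond Proposition \ref{prop est V} and Lemma \ref{lem est DNV}. The step I expect to require the most care is the bookkeeping of the quantifiers: the constant $B_{e,h}$ must be fixed depending on $e$ and $h$ only — not on $\varphi_1$, $\varphi_2$ or $\varepsilon$ — while the ``small'' parameters $\varepsilon/(2\mu)$ and $\varepsilon/(10\mu)$, together with the norms of $\varphi_1$ and $\varphi_2$, get absorbed into $C_{\varphi_1,\varphi_2,\varepsilon,p}$. Since both Proposition \ref{prop est V} and Lemma \ref{lem est DNV} are already organised with precisely this structure (first the constant $B_{e,h}$, then $\varepsilon$, then a constant independent of $e$ and $h$), this causes no circularity and the argument goes through directly.
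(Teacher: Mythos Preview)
Your proposal is correct and follows essentially the same route as the paper: apply the resolvent identity of Lemma~\ref{lem diff inverses} on $V$, bound the ``diagonal'' term via Lemma~\ref{lem est DNV} and the five $A_l^V$-terms via Proposition~\ref{prop est V}, absorbing $\|\varphi_1\|\,\|\varphi_2\|$ into the choice of the small parameter. The only cosmetic differences are that the paper uses $\varepsilon' = \varepsilon/\bigl(6(\|\varphi_1\|\,\|\varphi_2\|+1)\bigr)$ (avoiding your separate $\mu=0$ case) and splits the total as $1/6 + 5/6$ rather than your $1/2 + 1/2$.
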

\begin{proof}
Let $e\in C^{\infty}_c(G)$ and $h\in C^{\infty}_c(G/K)$ be given. Let $B_{e, h}$ be at least as large as the constants $B_{e, h}$ in Proposition \ref{prop est V} and Lemma \ref{lem est DNV}.
%, and also at least equal to $\|e\|_{L^1(G)} \|h\|_{\infty}$. 
Let $\varepsilon > 0$ be given.
Because $\varphi_1$ and $\varphi_2$ are $G$-equivariant and supported in $U$, they are bounded operators on $L^2(\cE)$. 
Set
\[
\varepsilon' := \frac{\varepsilon}{6(\|\varphi_1\|\,  \|\varphi_2\| + 1)}
\]
Let $C_{\varphi_1, \varphi_2, \varepsilon, p}$ be the maximum of the constants
 $C_{U, \varepsilon', p}$ in Proposition \ref{prop est V} and 
Lemma \ref{lem est DNV}.
%, for this value of $\varepsilon'$.
% Let $\kappa > 0$ be a lower bound for $\|v^{\psi}\|$ on $U$. 
%Set
%\[
%C_{\varphi_1, \varphi_2, \varepsilon, p} := \max\left\{ C_{U, \varepsilon', p}, \tilde C_{U, \varepsilon', p} \right\}.
%\]

Suppose that $f_V$ satisfies \eqref{eq f U} and \eqref{eq df U}. 
%Then we can choose a positive function $f_V \in C^{\infty}(V)^G$ such that $f_V|_U = f|_U$ and $f_V$ satisfies 
%\[
%f_V \geq C_{\varphi_1, \varphi_2, \varepsilon, p}/2
%\]
%and
%\[
%f_V^2 \geq
%  C_{\varphi_1, \varphi_2, \varepsilon, p} \|df_V\|/2.
%\]
% By Lemma \ref{lem DM DV U}, the norm on the left hand side of \eqref{eq norm Dfpsi U} equals
%\[
%\bigl\| \varphi_1  D_{f_V\psi_V}^{\alpha} (D_{f_V\psi_V}^2+1)^{-1} \pi_G(e) p^*h \varphi_2 \bigr\|. 
%\]
By Lemma \ref{lem diff inverses}, the norm on the left hand side of \eqref{eq norm Dfpsi U}  is at most equal to
\begin{multline} \label{eq norm D}
\bigl\| \varphi_1  D_{f_V\psi_V}^{\alpha} (D_{N_V}D_{N_V}^* + f_V^2 \|v^{\psi_V}\|^2+1)^{-1} \pi_G(e) p^*h \varphi_2 \bigr\| \\
+ 
\bigl\| \varphi_1 D_{f_V\psi_V}^{\alpha} (D_{f_V\psi_V}^2+1)^{-1}  \Delta  
\bigl(D_{N_V}D_{N_V}^* + f_V^2 \|v^{\psi_V}\|^2 + 1\bigr)^{-1}
%(D_{f_V\psi_V}^2+1)^{-1} 
\pi_G(e) p^*h \varphi_2 \bigr\|. 
\end{multline}
Here $\Delta$ was defined in \eqref{eq Delta}.
Lemma \ref{lem est DNV} implies that the first  term in \eqref{eq norm D} is at most equal to
\[
\|\varphi_1\| \|\varphi_2\| \varepsilon' B_{e, h} \leq \varepsilon B_{e, h}/6.
\]
Proposition \ref{prop est V} implies that the second term in \eqref{eq norm D} is at most equal to
\[
5\|\varphi_1\| \, \|\varphi_2\| \varepsilon' B_{e, h} \leq 5\varepsilon B_{e, h}/6,
\]
so the claim follows.
\end{proof}

Using Lemma \ref{lem est DV}, we obtain the estimate for the inverse of $D_{f\psi}^2+1$ that we need.
\begin{lemma} \label{lem est DM}
For all $e\in C^{\infty}_c(G)$ and $h\in C^{\infty}_c(G/K)$, there is a constant $B_{e, h}$, and
for all $\varepsilon > 0$, and all $\varphi_1, \varphi_2 \in \End(E)^G$, supported in $U$, there is a constant $C_{\varphi_1, \varphi_2, \varepsilon, p} > 0$, independent of $e$ and $h$,
 such that if 
\beq{eq f U M}
f|_U \geq C_{\varphi_1, \varphi_2, \varepsilon, p}
\eeq
and
\beq{eq df U M}
f^2|_U \geq
  C_{\varphi_1, \varphi_2, \varepsilon, p} \|df|_U\|,
\eeq
then for $\alpha \in\{0,1\}$, we have
\beq{eq norm DfpsiM U}
\bigl\| \varphi_1  D_{f\psi}^{\alpha} (D_{f\psi}^2+1)^{-1} \pi_G(e) p^*h \varphi_2 \bigr\| \leq \varepsilon B_{e, h}.
\eeq
\end{lemma}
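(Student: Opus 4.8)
The plan is to deduce Lemma \ref{lem est DM} from Lemma \ref{lem est DV}, the analogous estimate on the complete manifold $V$, by using Lemma \ref{lem DM DV U} to transfer between the resolvent of $D_{f\psi}$ on $M$ and that of $D_{f_V\psi_V}$ on $V$.

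The one extra ingredient over a direct citation of Lemma \ref{lem DM DV U} is the behaviour of $\pi_G(e)$. Being an average of the unitaries $\pi_G(g)$, it commutes with every $G$-equivariant operator occurring below: $D_{f\psi}$, $D_{f_V\psi_V}$, their resolvents, $c(d\chi)$ for $G$-invariant $\chi$, and multiplication by $G$-invariant functions. Also $p^*h\,\varphi_2$ is supported in the fixed cocompact set $\supp\varphi_2\subseteq U$, regardless of $e$ and $h$. So I would first commute $\pi_G(e)$ leftward, past $D_{f\psi}^{\alpha}(D_{f\psi}^2+1)^{-1}$, until it sits immediately after $\varphi_1$; then only $p^*h\,\varphi_2$ stands between the resolvent and the right-hand end, and the argument of Lemma \ref{lem DM DV U} goes through with the cutoff $\chi$ chosen to equal $1$ on a neighbourhood of $\supp\varphi_1\cup\supp\varphi_2$, hence independently of $e$ and $h$. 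The outcome is $G$-equivariant operators $T_0,T_1$ on $L^2(\cE|_U)$ with $\|T_\beta\|\leq 1$ and a $G$-equivariant endomorphism $\varphi=-c(d\chi)$ supported in $U$, all depending only on $U$, $V$, $\varphi_1$, $\varphi_2$, with
\begin{multline*}
\varphi_1 D_{f\psi}^{\alpha}(D_{f\psi}^2+1)^{-1}\pi_G(e)p^*h\varphi_2
=\varphi_1 D_{f_V\psi_V}^{\alpha}(D_{f_V\psi_V}^2+1)^{-1}\pi_G(e)p^*h\varphi_2 \\
+\varphi_1 T_0\varphi(D_{f_V\psi_V}^2+1)^{-1}\pi_G(e)p^*h\varphi_2
+\varphi_1 T_1\varphi D_{f_V\psi_V}(D_{f_V\psi_V}^2+1)^{-1}\pi_G(e)p^*h\varphi_2 .
\end{multline*}

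I would then bound the three terms using Lemma \ref{lem est DV}. The first is at most $\varepsilon' B_{e,h}$ by that lemma, applied with $\varphi_1$, $\varphi_2$, $\varepsilon'$ and exponent $\alpha$. The second I would write as $(\varphi_1 T_0)\cdot\bigl(\varphi(D_{f_V\psi_V}^2+1)^{-1}\pi_G(e)p^*h\varphi_2\bigr)$; here $\|\varphi_1 T_0\|\leq\|\varphi_1\|$, and the second factor is at most $\varepsilon' B_{e,h}$ by Lemma \ref{lem est DV} applied with $\varphi$ in place of $\varphi_1$ and exponent $0$ (using that $\varphi\in\End(\cE)^G$ is supported in $U$), so the whole term is at most $\|\varphi_1\|\,\varepsilon' B_{e,h}$; the third term is identical with exponent $1$. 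Taking $\varepsilon':=\varepsilon/(2\|\varphi_1\|+1)$, letting $B_{e,h}$ be the largest of the three constants produced by these applications of Lemma \ref{lem est DV}, and letting $C_{\varphi_1,\varphi_2,\varepsilon,p}$ be the largest of the three threshold constants --- each of these independent of $e$ and $h$ --- gives the claim, provided $f_V\geq C_{\varphi_1,\varphi_2,\varepsilon,p}$ and $f_V^2\geq C_{\varphi_1,\varphi_2,\varepsilon,p}\,\|df_V\|$ hold on all of $V$; whenever $f|_U$ satisfies \eqref{eq f U M} and \eqref{eq df U M}, one can choose the extension $f_V$ of $f|_U$ so that these bounds persist, by keeping $f_V$ large and slowly varying on the attached cylinder.

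The delicate point, as emphasised in Subsection \ref{sec idea}, is that the threshold constant must not involve $e$ or $h$. The danger is that the cutoff $\chi$ --- hence $\varphi=-c(d\chi)$, hence the constant coming from Lemma \ref{lem est DV} with localiser $\varphi$ --- might be forced to depend on $\supp(e)$ and $\supp(h)$. This is exactly what commuting $\pi_G(e)$ out of the way before invoking Lemma \ref{lem DM DV U} avoids: it leaves the fixed endomorphism $p^*h\,\varphi_2$ adjacent to the resolvent, so $\chi$ need only cover $\supp\varphi_1\cup\supp\varphi_2$. The rest is the commutator computation already carried out in the proof of Lemma \ref{lem DM DV U}.
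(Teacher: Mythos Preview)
Your proposal is correct and follows essentially the same route as the paper: apply Lemma~\ref{lem DM DV U} to pass from $M$ to $V$, then bound each of the three resulting terms with Lemma~\ref{lem est DV}, choosing $\varepsilon'$ proportional to $\varepsilon/(\|\varphi_1\|+1)$ and taking the threshold constant to be the maximum of the thresholds arising from the applications of Lemma~\ref{lem est DV} with localisers $(\varphi_1,\varphi_2)$ and $(\varphi,\varphi_2)$. The paper additionally inserts a factor of $2$ in the threshold constant to absorb the possible loss when extending $f|_U$ to $f_V$ on the cylinder; you should make this explicit as well, but otherwise your argument matches. Your discussion of commuting $\pi_G(e)$ leftward so that the cutoff $\chi$ depends only on $\supp\varphi_1\cup\supp\varphi_2$ is a point the paper leaves implicit, and it is exactly the right observation to ensure the threshold constant is independent of $e$ and $h$.
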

\begin{proof}
Let $e \in C^{\infty}_c(G)$ and $h \in C^{\infty}_c(G/K)$ be given. Let $B_{e, h}$ be as in Lemma \ref{lem est DV}. Let $\varphi_1, \varphi_2 \in \End(E)^G$ be supported in $U$, and let $\varepsilon > 0$. Let $T_0$,  $T_1$ and $\varphi$ be as in Lemma \ref{lem DM DV U}.
We use tildes on the constants in Lemma \ref{lem est DV} to distinguish them from the constants in this lemma, and set
\[
\begin{split}
\varepsilon' &:= \varepsilon/(\|\varphi_1\|+1);\\
C_{\varphi_1, \varphi_2, \varepsilon, p} &:= 2 \max\{\tilde C_{\varphi_1, \varphi_2, \varepsilon/3, p},
	\tilde C_{\varphi, \varphi_2, \varepsilon'/3, p} \}.
\end{split}
\]
By Lemma \ref{lem DM DV U}, the norm on the left hand side of \eqref{eq norm DfpsiM U} is at most equal to
\begin{multline}\label{eq est DV 3 terms}
\bigl\| \varphi_1  D_{f_V\psi_V}^{\alpha} (D_{f_V\psi_V}^2+1)^{-1} \pi_G(e) p^*h \varphi_2 \bigr\| 
\\
+ 
\bigl\| \varphi_1 T_0 \| \cdot \| \varphi (D_{f_V\psi_V}^2+1)^{-1} \pi_G(e) p^*h \varphi_2 \bigr\|
\\
+ 
\bigl\| \varphi_1 T_1 \| \cdot \| \varphi D_{f_V\psi_V} (D_{f_V\psi_V}^2+1)^{-1} \pi_G(e) p^*h \varphi_2 \bigr\|.
\end{multline}
Suppose that $f$ satisfies \eqref{eq f U M} and \eqref{eq df U M}. Then $f_V$ can be chosen so that it satisfies
satisfies \eqref{eq f U} and \eqref{eq df U}, with $C_{\varphi_1, \varphi_2, \varepsilon, p}$ in \eqref{eq f U} and \eqref{eq df U} replaced by $C_{\varphi_1, \varphi_2, \varepsilon, p}/2$ as chosen in this proof. 
Then
 Lemma \ref{lem est DV} implies that \eqref{eq est DV 3 terms} is at most equal to 
\[
\frac{\varepsilon}{3} \Bigl( 1 + 2\frac{\| \varphi_1 \|}{\|\varphi_1\|+1} \Bigr)B_{e, h} \leq \varepsilon B_{e, h}.
\]
\end{proof}

We are now ready to prove Proposition \ref{prop sum G Fred}, and hence Theorem \ref{thm def Dirac G Fred}.

\medskip\noindent
\emph{Proof of Proposition \ref{prop sum G Fred}.}
Let an open cover $\{U_j\}_{j=1}^{\infty}$ of $M$ and a partition of unity $\{\chi_j^2 \}_{j=0}^{\infty}$ as in Subsection \ref{sec idea} be given. For each $j\geq 1$, consider the vector bundle endomorphisms $\chi_j$ and $c(d\chi_j)$
%\[
%\begin{split}
%\varphi_{1, j} &:= c(d\chi_j);\\
%\varphi_{2, j} &:= \chi_j
%\end{split}
%\]
of $\cE$. For each $j\geq 1$, set
\[
C_j := \max\{ C_{\chi_j, \chi_j, 2^{-j}/3}, C_{c(d\chi_j), \chi_j, 2^{-j}/3} \},
\]
with $C_{\chi_j, \chi_j, 2^{-j}/3}$  and $C_{c(d\chi_j), \chi_j, 2^{-j}/3}$ as in Lemma \ref{lem est DM}. Because every point in $m$ lies in only finitely many of the sets $U_j$, there is a function $\rho \in C^{\infty}(M)^G$ such that for all $j\geq 1$, 
\[
\rho|_{U_j} \geq C_j.
\]
Suppose that $f\in C^{\infty}(M)^G$ is $\rho$-admissible. Then for all $j$,
\[
\begin{split}
f|_{U_j} &\geq C_j;\\
f|_{U_j}^2 &\geq C_j \|df|_{U_j}\|. 
\end{split}
\]
Let $e \in C^{\infty}_c(G)$ and $h\in C^{\infty}_c(G/K)$ be given, and let $B_{e, h}$ be as in Lemma \ref{lem est DM}. 

Note that for all $j$,
\[
(D_{f\psi}^2+1)^{-1} \pi_G(e) p^*h \,  \chi_j^2 =\chi_j (D_{f\psi}^2+1)^{-1} \pi_G(e) p^*h \,  \chi_j + \bigl[(D_{f\psi}^2+1)^{-1}, \chi_j \bigr] \pi_G(e) p^*h \,  \chi_j. 
\]
Now
\[
\bigl[(D_{f\psi}^2+1)^{-1}, \chi_j \bigr] = (D_{f\psi}^2+1)^{-1}  \bigl( D_{f\psi} c(d\chi_j) + c(d\chi_j)D_{f\psi} \bigr) (D_{f\psi}^2+1)^{-1}. 
\]
Therefore,
\begin{multline*}
\bigl\| (D_{f\psi}^2+1)^{-1} \pi_G(e) p^*h \,  \chi_j^2 \bigr\| \leq \bigl\| \chi_j (D_{f\psi}^2+1)^{-1} \pi_G(e) p^*h \,  \chi_j \bigr\| \\
	+ \bigl\| (D_{f\psi}^2+1)^{-1}  D_{f\psi} c(d\chi_j)  (D_{f\psi}^2+1)^{-1}\pi_G(e) p^*h \,  \chi_j \bigr\| \\
	+ \bigl\| (D_{f\psi}^2+1)^{-1}  c(d\chi_j)  D_{f\psi}  (D_{f\psi}^2+1)^{-1} \pi_G(e) p^*h \,  \chi_j\bigr\|. 
\end{multline*}
By Lemma \ref{lem est DM}, all three terms on the right hand side are at most equal to $2^{-j}B_{e, h}/3$ for all $j\geq 1$. Hence Proposition \ref{prop sum G Fred} follows. 
\hfill $\square$

\medskip
By the arguments in Subsection \ref{sec idea}, Proposition \ref{prop sum G Fred} implies Theorem \ref{thm def Dirac G Fred}.

\subsection{Independence of choices}

Let us prove Proposition \ref{prop indep}. We start by showing that different admissible functions lead to the same index.
\begin{lemma}\label{lem indep f}
In the setting of Theorem \ref{thm def Dirac G Fred}, let $f_0$ and $f_1$ be two $\rho$-admissible functions. Then
\[
\ind_G D_{f_0 \psi} = \ind_G D_{f_1 \psi}.
\]
\end{lemma}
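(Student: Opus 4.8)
The plan is to join $D_{f_0\psi}$ and $D_{f_1\psi}$ by an operator homotopy, packaged as a Kasparov cycle over $C([0,1])$ in exactly the way that $\ind_G^p$ was shown above to be independent of $p$. Fix a smooth equivariant $p\colon M\to G/K$ (by that independence we may work with one such $p$). We may assume that $f_0$ and $f_1$ are positive and bounded below by a positive constant: $\rho$-admissibility only constrains behaviour outside a cocompact set, so each $f_i$ agrees outside a cocompact set with such a function, and two $\rho$-admissible functions that agree outside a cocompact set are joined by the straight-line path, which is $\rho$-admissible there; the argument below applies to that auxiliary homotopy too.

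The key construction is the harmonic interpolation
\[
f_t := \bigl( (1-t)f_0^{-1} + t f_1^{-1} \bigr)^{-1}, \qquad t\in[0,1],
\]
so that $t\mapsto f_t^{-1}$ is the straight line from $f_0^{-1}$ to $f_1^{-1}$. Inverting is what makes $\rho$-admissibility convex: writing $u=f^{-1}$, the identity used in the proof of Lemma \ref{lem global f} shows that a positive $f\in C^{\infty}(M)^G$ is $\rho$-admissible exactly when $\|du\|+u+u^2\le\rho^{-1}$ outside a cocompact set, and since $u\mapsto u^2$ is convex while $u\mapsto u$ is affine and $u\mapsto\|du\|$ is convex, the combination $f_t^{-1}$ satisfies this bound for every $t$. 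Hence each $f_t$ is $\rho$-admissible, so $D_{f_t\psi}$ is $G$-Fredholm for $p$ by Theorem \ref{thm def Dirac G Fred}; it is moreover essentially self-adjoint by Proposition \ref{prop ess sa}, since it differs from $D$ by the order-zero term $-\ii f_tc(v^{\psi})$ and so has the same finite propagation speed. Thus $F_t:=D_{f_t\psi}(D_{f_t\psi}^2+1)^{-1/2}$ defines a Kasparov $(C_0(G/K)\rtimes G,\C)$-cycle $(L^2(\cE),F_t,\pi^p_{G, G/K})$ for each $t$.

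It remains to show that $\bigl(C([0,1],L^2(\cE)),(F_t)_t,\pi\bigr)$ is a Kasparov cycle over $C([0,1])$ (a standard homotopy in the sense of Definition 17.2.2 in \cite{Blackadar}); evaluating at the endpoints then gives $\ind_G^p(D_{f_0\psi})=\ind_G^p(D_{f_1\psi})$. Unwinding, one needs $t\mapsto(D_{f_t\psi}^2+1)^{-1}\pi_G(e)p^*h$ and $t\mapsto[F_t,p^*h]$ to be norm-continuous for all $e\in C^{\infty}_c(G)$ and $h\in C^{\infty}_c(G/K)$; this is the step I expect to be the main obstacle. The plan is to obtain it by re-running the local estimates of Sections \ref{sec decomp Dirac}--\ref{sec loc est} for the \emph{difference} of resolvents. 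From the explicit form of $f_t$ and the positive lower bound on $f_0,f_1$ one gets $|f_s-f_t|\le|s-t|\,C_{f_0,f_1}f_sf_t$, so the resolvent identity
\[
(D_{f_t\psi}^2+1)^{-1}-(D_{f_s\psi}^2+1)^{-1}=(D_{f_t\psi}^2+1)^{-1}\bigl(D_{f_s\psi}^2-D_{f_t\psi}^2\bigr)(D_{f_s\psi}^2+1)^{-1}
\]
produces a factor linear in $|s-t|$; the Bochner-type formula of Lemma \ref{lem Bochner} shows that $D_{f\psi}^2+1$ dominates $f^2\|v^{\psi}\|^2+1$ modulo controlled first-order terms, so the extra factors $f_s,f_t$ are absorbed by the resolvents. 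Decomposing against a partition of unity $\{\chi_j^2\}$ subordinate to a relatively cocompact cover as in Subsection \ref{sec idea}, and estimating each piece exactly as in Proposition \ref{prop sum G Fred} and Lemma \ref{lem est DM} — keeping, as there, the constants depending on $e,h$ separate from the geometric ones and choosing the cover and auxiliary function large enough — one gets bounds of the form $|s-t|\,2^{-j}B_{e,h}$ whose sum converges, giving the required norm-continuity. The same argument, interpolating the Clifford data, the connections and the maps $p_j$ in addition to $f_j$, will then yield Proposition \ref{prop indep}.
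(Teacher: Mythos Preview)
Your argument for the lemma follows the same strategy as the paper's: connect $f_0$ to $f_1$ by a path of $\rho$-admissible functions and invoke an operator homotopy. The implementation differs. The paper takes the scaled affine path $h_t:=4\bigl((1-t)f_0+tf_1\bigr)$ and checks $\rho$-admissibility by a direct chain of inequalities; your harmonic interpolation $f_t:=\bigl((1-t)f_0^{-1}+tf_1^{-1}\bigr)^{-1}$ is a cleaner choice, since in the variable $u=f^{-1}$ the admissibility condition $\|du\|+u+u^2\le\rho^{-1}$ is manifestly convex and no ad hoc constant is needed. Where the paper simply writes ``by operator homotopies'', you spell out that one must check norm-continuity of $t\mapsto (D_{f_t\psi}^2+1)^{-1}\pi_G(e)p^*h$ and sketch how the resolvent identity together with the estimate $|f_s-f_t|\le C|s-t|f_sf_t$ and the machinery of Sections~\ref{sec decomp Dirac}--\ref{sec loc est} would do this; that is more careful than the paper, not less.

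One caution about your final sentence. Your claim that ``the same argument, interpolating the Clifford data, the connections and the maps $p_j$'' will give Proposition~\ref{prop indep} is precisely what the paper does \emph{not} do. The paper notes that $L^2(\cE)$ depends on the Riemannian density, so an operator homotopy on a fixed Hilbert space is not available when the metric varies. Instead the paper builds all the data into a single Clifford module over $S^1\times M$, obtains a class in $KK_1\bigl(C(S^1)\otimes C_0(G/K)\rtimes G,\C\bigr)$, and compares the two indices via restriction to two arcs and the suspension isomorphism. A direct interpolation as you propose would have to confront the changing Hilbert spaces head-on; that is exactly the obstacle the suspension trick is designed to avoid.
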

\begin{proof}
Let $f_0$ and $f_1$ be two $\rho$-admissible functions. Then for all constants $a\geq 1$ and $j=0,1$, the function $af_j$ is $\rho$-admissible. Furthermore, by a homotopy argument, we have
\[
\ind_G (D_{af_j\psi}) = \ind_G(D_{f_j\psi}) 
\]

For $t\in [0,1]$, we consider 
the function 
\[
h_t := 4\bigl( (1-t)f_0 + tf_1 \bigr)\quad \in C^{\infty}(M)^G.
\]
For all $t\in [0,1]$, we have
\[
\begin{split}
\frac{h_t^2}{\|dh_t\|+h_t + 1} &= \frac{h_t^2}{4\bigl( (1-t)(\|df_0\| + f_0 + 1/4) + t(\|df_1\|+f_1 + 1/4)\bigr)} \\
&\geq \frac{4\bigl( (1-t)^2f_0^2 + t^2 f_1^2 \bigr)}{ (1-t)(\|df_0\| + f_0 + 1) + t(\|df_1\|+f_1 + 1)} \\
&\geq \frac{2\bigl( (1-t)^2f_0^2 + t^2 f_1^2 \bigr)}{ \max_{j=0,1}(\|df_j\| + f_j + 1) } \\
&\geq 2 \bigl( (1-t)^2\rho + t^2\rho \bigr)\\
&\geq \rho,
\end{split}
\]
so $h_t$ is $\rho$-admissible for all $t$.

We conclude that, by  operator homotopies,
\[
\begin{split}
\ind_G D_{f_0 \psi} &= \ind_G(D_{h_0\psi}) \\
	&= \ind_G(D_{h_1\psi}) \\
&=\ind_G D_{f_1 \psi}.
\end{split}
\]
\end{proof}

The space of  $L^2$-sections of $\cE$ depends on the Riemannian metric, through the Riemannian density. Therefore, operator homotopies, for operators on a fixed Hilbert space, are not enough to prove the remaining part of Proposition \ref{prop indep}. We can use an argument
 modelled on Section 11.2 of \cite{higson00}, however.  We use the notation of Proposition \ref{prop indep}.

Let $B_{S^1}$ be the standard Riemannian metric on $TS^1$. Let $\C_1$ be the complex Clifford algebra with one generator $e_1$. Then, as a complex vector space, $\C_1 = \Span_{\C}\{1, e_1\} \cong \C^2$. Consider the spinor bundle $\cE_{S^1} = S^1 \times \C_1\to S^1$. We have the Dirac operator $D_{S^1} = c(e_1)\frac{d}{d\alpha}$, where $\alpha$ is the angle coordinate on $S^1$. If $I\subset {]0, 2\pi[}$ is an open sub-interval, we embed it into $S^1$ via the map $x\mapsto e^{ix}$. We write $B_I$, $\cE_I$ and $D_I$ for the restrictions to $I$ of $B_{S^1}$, $\cE_{S^1}$ and $D_{S^1}$, respectively. We set $I_0 := {]0, \pi/2[}$ and $I_1 := {]\pi, 3\pi/2[}$.

\begin{lemma}\label{lem D tilde}
There is a Riemannian metric $\tilde B$ on $S^1 \times M$, a $G$-equivariant Clifford module $\tilde \cE \to S^1\times M$, a $G$-invariant Clifford connection $\nabla^{\tilde \cE}$ on $\tilde \cE$, and a smooth, $G$-equivariant map $\tilde p\colon S^1\times M\to G/K$, such that for $j=0,1$, the metric $\tilde B|_{I_j\times M}$ is the product metric of $B_{I_j}$ and $B_j$, we have
\[
\tilde \cE|_{I_j \times M} \cong \cE_{I_j} \boxtimes \cE_j \to I_j \times M
\]
as $G$-equivariant Clifford modules, the Dirac operator $\tilde D$ associated to $\nabla^{\tilde \cE}$ satisfies
\[
\tilde D|_{I_j\times M} = D_{I_j}\otimes 1 + 1\otimes D^{\cE_j},
\]
and for all $t\in I_j$ and $m\in M$, we have
\[
\tilde p(t, m) = p_j(m).
\]
\end{lemma}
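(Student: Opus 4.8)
The plan is a gluing/interpolation construction on $S^1\times M$. Write $S^1=\overline{I_0}\cup J_1\cup\overline{I_1}\cup J_2$, where $J_1,J_2$ are disjoint closed arcs joining the two endpoints of $\overline{I_0}$ to the two endpoints of $\overline{I_1}$ and meeting $\overline{I_0}\cup\overline{I_1}$ only in their endpoints. Over $\overline{I_j}\times M$ (for $j=0,1$) I would take the product data: $\tilde B:=B_{I_j}\times B_j$, $\tilde\cE:=\cE_{I_j}\boxtimes\cE_j$ with its product Clifford action, $\nabla^{\tilde\cE}$ the product Clifford connection, and $\tilde p(t,m):=p_j(m)$; this realises the required restrictions over $I_0\times M$ and $I_1\times M$ on the nose. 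Since $G$ acts trivially on the $S^1$-factor and all of $B_j,c_j,\nabla^{\cE_j},p_j$ are $G$-(in)variant, $G$-equivariance will be automatic provided the remaining interpolation over $J_1\times M$ and $J_2\times M$ uses only cut-off functions in the $S^1$-variable. By the obvious symmetry it suffices to interpolate over one bridge $J=[a,b]$, with $t=a$ glued to $\overline{I_0}$ and $t=b$ glued to $\overline{I_1}$.

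For the metric, I would choose a smooth path $(B_t)_{t\in[a,b]}$ of $G$-invariant Riemannian metrics on $M$ with $B_t\equiv B_0$ near $a$ and $B_t\equiv B_1$ near $b$ (a reparametrised convex combination), and set $\tilde B:=dt^2+B_t$ on $J\times M$, which glues to the product metrics at the ends. For $\tilde p$, use that $G/K$ is $G$-equivariantly contractible, so $p_0$ and $p_1$ are $G$-equivariantly homotopic; take $\tilde p|_{J\times M}$ to be such a homotopy, constant near $a$ and $b$. For $\nabla^{\tilde\cE}$, use that the $G$-invariant Clifford connections on a fixed Clifford module form an affine, hence convex, set, so once $\tilde\cE$ and its Clifford action are fixed over $J\times M$ one may take $\nabla^{\tilde\cE}|_{J\times M}$ to interpolate the two product connections, again constant near the ends (an affine combination of Clifford connections is again Clifford).

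The substantive point is the Clifford module. Over $J\times M$ take $\tilde\cE:=\cE_{S^1}\boxtimes\cE$ as a fixed $\Z_2$-graded Hermitian bundle, and, given a $G$-equivariant Clifford action $c_t$ of $(TM,B_t)$ on $\cE$, equip it with the action $c(s\,\partial_t+w):=s\,c_{S^1}(\partial_t)\otimes 1_{\cE}+\gamma_{\cE_{S^1}}\otimes c_t(w)$ for $s\in\R$, $w\in TM$. One checks directly that this is a $G$-equivariant Clifford action of $(T(J\times M),dt^2+B_t)$ on $\cE_{S^1}\boxtimes\cE$ — the cross term vanishes because $c_{S^1}(\partial_t)\otimes 1$ is odd and anticommutes with $\gamma_{\cE_{S^1}}\otimes c_t(w)$ — and it reduces to the product Clifford action precisely when $c_t$ does. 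Hence it suffices to produce a smooth $G$-equivariant path $(c_t)_{t\in[a,b]}$ of Clifford actions on $\cE$, with $c_t$ compatible with $B_t$, equal to $c_0$ near $a$ and to $c_1$ near $b$. I would build this in two stages. First, transport $c_0$ along the metric interpolation: let $A_t\in\Gamma(\End(TM))^G$ be the $B_0$-symmetric positive endomorphism with $B_t=B_0(A_t\,\cdot,\cdot)$, and put $c_0^{(t)}(w):=c_0(A_t^{1/2}w)$; this is a $G$-equivariant Clifford action compatible with $B_t$, equal to $c_0$ at $a$ and to a $B_1$-compatible action $c_1':=c_0(A_b^{1/2}\,\cdot)$ at $b$. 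Second, near $b$, connect $c_1'$ to $c_1$ through a $G$-equivariant path of Clifford actions all compatible with the single metric $B_1$. Splicing the two stages with a $G$-invariant cut-off in $t$ yields the desired $(c_t)$.

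The main obstacle is the second stage: one must know that the space of $G$-equivariant, $B_1$-compatible Clifford actions on the fixed $G$-equivariant bundle $\cE$ is path-connected. Fibrewise this is the structure theory of complex Clifford modules — any two actions differ by a unitary automorphism of $\cE$ intertwining the relevant Clifford subalgebra — and promoting this to a global, $G$-equivariant homotopy is the delicate part, which I expect to carry out by a $G$-invariant partition of unity, following Section 11.2 of \cite{higson00}. Once $\tilde B,\tilde\cE,\nabla^{\tilde\cE},\tilde p$ are built over $J_1$ and $J_2$ in this way and glued, the restriction properties over $I_0\times M$ and $I_1\times M$ hold by construction, and the Dirac operator $\tilde D$ associated to $\nabla^{\tilde\cE}$ restricts over $I_j\times M$ to $D_{I_j}\otimes 1+1\otimes D^{\cE_j}$, since the Dirac operator of an external product of Clifford modules carrying the product Clifford connection is the sum of the two partial Dirac operators.
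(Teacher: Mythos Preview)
Your approach is correct and is essentially the same as the paper's, which is much terser: the paper also takes $\tilde\cE=\cE_{S^1}\boxtimes\cE$ as the underlying $G$-equivariant bundle, invokes a partition of unity for $\tilde B$, the Clifford action and the Clifford connection, and appeals to $G$-equivariant contractibility of $G/K$ for $\tilde p$. Your write-up simply unpacks what ``partition of unity'' has to mean here---in particular, you correctly isolate the only nontrivial step, namely connecting the two $G$-equivariant Clifford actions on $\cE$, and you correctly point to Section~11.2 of \cite{higson00} (which the paper itself invokes) for the relevant argument.
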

\begin{proof}
As a $G$-equivariant vector bundle, we take $\tilde \cE = \cE_{S^1} \boxtimes \cE$.
The metric $\tilde B$, Clifford action on $\tilde \cE$ and Clifford connection on $\tilde \cE$ can be constructed using a partition of unity. The map $\tilde p$ exists because $G/K$ is $G$-equivariantly contractible.
\end{proof}

Fix $\tilde B$, $\tilde \cE$, $\nabla^{\tilde \cE}$, $\tilde D$ and $\tilde p$ as in Lemma \ref{lem D tilde}. Let $\tilde \psi\colon S^1\times M\to \kg$ and $\tilde f \in C^{\infty}(S^1 \times M)^G$ be the pullbacks of $\psi$ and $f$, respectively. Consider the deformed Dirac operator
\[
\tilde D_{\tilde f\tilde \psi} = \tilde D - \ii \tilde f \tilde c(v^{\tilde \psi}),
\]
with $\tilde c$ the Clifford action by $T(S^1 \times M)$ on $\tilde \cE$.
\begin{lemma}\label{lem tilde D KK}
There is a positive function $\tilde \rho \in C^{\infty}(S^1\times M)^G$ such that if $\tilde f$ is $\tilde \rho$-admissible, the triple
\beq{eq tilde D KK}
\Bigl(L^2(\tilde \cE), \frac{\tilde D_{\tilde f\tilde \psi}}{\sqrt{\tilde D_{\tilde f\tilde \psi}^2 + 1}}, \pi_{S^1} \otimes \pi_{G, G/K} \Bigr)
\eeq
is a Kasparov $\bigl(C(S^1) \otimes C_0(G/K)\rtimes G, \C \bigr)$-cycle. Here $\pi_{S_1}\colon C(S^1) \to \cB(L^2(\tilde \cE))$ is defined by pointwise multiplication after pulling back to $S^1 \times M$.
\end{lemma}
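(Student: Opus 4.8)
The plan is to view $\tilde D_{\tilde f\tilde\psi}$ as a deformed Dirac operator on the proper $G$-manifold $S^1\times M$, with $G$ acting trivially on the $S^1$-factor, and to reduce the lemma to Theorem~\ref{thm def Dirac G Fred} applied there. First note that $C(S^1)\otimes C_0(G/K)\rtimes G = \bigl(C(S^1)\otimes C_0(G/K)\bigr)\rtimes G$ (the $G$-action on $C(S^1)$ being trivial), and that $\pi_{S^1}\otimes\pi_{G,G/K}$ is a well-defined $*$-representation of this crossed product on $L^2(\tilde\cE)$ into even operators, because $\pi_{S^1}$ and $\pi_{G,G/K}$ commute: multiplication by pullbacks of functions on $S^1$ is $G$-equivariant. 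The hypotheses of Theorem~\ref{thm def Dirac G Fred} hold for the data $(S^1\times M,\tilde\cE,\nabla^{\tilde\cE},\tilde\psi,\tilde p)$: the $G$-action on $S^1\times M$ is proper and isometric for $\tilde B$ since the one on $M$ is; we may take the metric $\tilde B$ of Lemma~\ref{lem D tilde} complete (on $I_0\times M$ and $I_1\times M$ it is a product of the round metric with a complete metric on $M$, and one patches it with an arbitrary complete $G$-invariant metric on the rest of $S^1\times M$); $\tilde\cE$ is a $G$-equivariant Clifford module with $G$-invariant Clifford connection; and since $v^{\tilde\psi}_{(t,m)}=(0,v^\psi_m)$ one has $\Zeroes(v^{\tilde\psi})=S^1\times\Zeroes(v^\psi)$, which is cocompact. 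Theorem~\ref{thm def Dirac G Fred} therefore yields a function $\tilde\rho\in C^\infty(S^1\times M)^G$ such that for every $\tilde\rho$-admissible $\tilde f$, the operator $\tilde D_{\tilde f\tilde\psi}$ is $G$-Fredholm for $\tilde p$; in particular, for all $e\in C^\infty_c(G)$ and $h\in C^\infty_c(G/K)$ the operators $\tilde p^*h\,\pi_G(e)(\tilde D_{\tilde f\tilde\psi}^2+1)^{-1}$ and $\bigl[\tilde D_{\tilde f\tilde\psi}(\tilde D_{\tilde f\tilde\psi}^2+1)^{-1/2},\ \tilde p^*h\bigr]$ are compact. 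This is the function $\tilde\rho$ of the lemma.

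It remains to promote this from a Kasparov cycle over $C_0(G/K)\rtimes G$ to one over $C(S^1)\otimes C_0(G/K)\rtimes G$. For this I would repeat the argument of Lemma~\ref{lem diff op G Fred}, i.e.\ apply the Baaj--Julg criterion (Proposition~2.2 of \cite{Baaj83}) to the spectral triple $\bigl(C^\infty(S^1)\otimes C^\infty_c(G\times G/K),\ L^2(\tilde\cE),\ \tilde D_{\tilde f\tilde\psi}\bigr)$, the dense $*$-subalgebra acting via $\pi_{S^1}\otimes\pi_{G,G/K}$. The operator $\tilde D_{\tilde f\tilde\psi}$ is a symmetric first order differential operator of finite propagation speed (equal to that of the undeformed $\tilde D$, the deformation being of order zero), hence essentially self-adjoint by Proposition~\ref{prop ess sa}; we work with its closure. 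For the bounded-commutator condition, $\tilde D_{\tilde f\tilde\psi}$ is $G$-equivariant, so $[\tilde D_{\tilde f\tilde\psi},\pi_G(e)]=0$, while $[\tilde D_{\tilde f\tilde\psi},\pi_{S^1}(b)\,\tilde p^*h]=\tilde c\bigl(d\bigl((\text{pullback of }b)\cdot\tilde p^*h\bigr)\bigr)$ is a bounded endomorphism of $\tilde\cE$ (the zeroth order deformation term commuting with all multiplications); by the Leibniz rule $[\tilde D_{\tilde f\tilde\psi},\pi_{S^1}(b)\,\tilde p^*h\,\pi_G(e)]$ is bounded. For the resolvent-compactness condition, $\pi_{S^1}(b)$ is bounded, so $\pi_{S^1}(b)\,\tilde p^*h\,\pi_G(e)(\tilde D_{\tilde f\tilde\psi}^2+1)^{-1}$ is compact by the previous paragraph, and a general element of the dense subalgebra is a norm-limit of finite sums of such terms; so the condition holds throughout. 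Proposition~2.2 of \cite{Baaj83} then shows that the bounded transform $\tilde D_{\tilde f\tilde\psi}(\tilde D_{\tilde f\tilde\psi}^2+1)^{-1/2}$, together with $\pi_{S^1}\otimes\pi_{G,G/K}$, defines a Kasparov $\bigl(C(S^1)\otimes C_0(G/K)\rtimes G,\C\bigr)$-cycle, which is the assertion of the lemma.

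I do not expect a serious new obstacle: all of the hard analysis — the localisation estimates showing that the resolvent of $\tilde D_{\tilde f\tilde\psi}^2+1$ becomes compact after cutting down in the group direction — is inherited from Theorem~\ref{thm def Dirac G Fred} applied to $S^1\times M$, and the passage to the larger algebra is straightforward precisely because $G$ acts trivially on $S^1$, so that $\pi_{S^1}(b)$ is $G$-equivariant (hence commutes with $\pi_{G,G/K}$ and disturbs no compactness condition) and $db$ pulls back to a bounded, $G$-invariant $1$-form. The only points needing a little care are the choice of a complete metric $\tilde B$ in Lemma~\ref{lem D tilde} and the routine bookkeeping with the dense $*$-subalgebra of the crossed product, handled exactly as in the earlier treatment of $C_0(G/K)\rtimes G$; it is cleanest to phrase the whole argument as a reduction to Theorem~\ref{thm def Dirac G Fred} plus an invocation of the Baaj--Julg machinery, as above, rather than estimating commutators of the bounded transform with $\pi_{S^1}(b)$ directly.
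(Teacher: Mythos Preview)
Your proposal is correct and follows essentially the same approach as the paper: apply Theorem~\ref{thm def Dirac G Fred} to $S^1\times M$ (using compactness of $S^1$ to get cocompactness of $\Zeroes(v^{\tilde\psi})$), then upgrade to the larger algebra using that $\pi_{S^1}$ commutes with $\pi_{G,G/K}$. The paper's proof is a three-line sketch that simply asserts ``Since $\pi_{S^1}$ commutes with $\pi_{G,G/K}$, the claim follows,'' whereas you spell out carefully via Baaj--Julg why the extra commutator $[\tilde D_{\tilde f\tilde\psi},\pi_{S^1}(b)]$ is bounded and why the resolvent-compactness condition persists; this is a genuine clarification, since commutativity of the two representations alone does not immediately give the commutator condition for $\pi_{S^1}$.
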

\begin{proof}
Since $S^1$ is compact, the set of zeroes of the vector field $v^{\tilde \psi}$ is cocompact. Hence Theorem \ref{thm def Dirac G Fred} implies that there is a function $\tilde \rho$ such that if $\tilde f$ is $\tilde \rho$-admissible, the triple \eqref{eq tilde D KK} is a Kasparov $(C_0(G/K)\rtimes G, \C)$-cycle. Since $\pi_{S^1}$ commutes with $\pi_{G, G/K}$, the claim follows.
\end{proof}
\begin{remark}
Lemma \ref{lem tilde D KK} still holds if $X$ is replaced by any compact manifold $X$. The authors also expect it to be true if $X$ is noncompact but complete. Then the representation $\pi_{X}$ (of $C_0(X)$ in that case) plays a more important role.
\end{remark}

Let $\tilde \rho$ be as in Lemma \ref{lem tilde D KK}, and suppose $\tilde f$ is $\tilde \rho$-admissible. Let 
\[
[\tilde D_{\tilde f\tilde \psi}] \in KK_1(C(S^1) \otimes C_0(G/K)\rtimes G, \C)
\]
be the class defined by \eqref{eq tilde D KK}. Note that this is a class in odd $KK$-theory. For an open interval $I\subset {]0, 2\pi[}$, consider the restriction map
\[
r_I\colon KK_1(C(S^1) \otimes C_0(G/K)\rtimes G, \C)\to KK_1(C_0(I) \otimes C_0(G/K)\rtimes G, \C),
\]
induced by the inclusion map $C_0(I)\hookrightarrow C(S^1)$. 
%(We will use the same notation if $S^1$ is replaced by $I$, and $I$ by an open sub-interval $I'\subset I$.) 
We also have the suspension isomorphism
\[
s_I\colon KK_1(C_0(I) \otimes C_0(G/K)\rtimes G, \C)\xrightarrow{\cong} KK_0(C_0(G/K)\rtimes G, \C),
\]
see Definition 9.5.6 in \cite{higson00}. The core of the proof of Proposition \ref{prop indep} is the fact that we can recover $G$-indices of deformed Dirac operators from operators on $S^1\times M$, using the suspension isomorphism. 
\begin{lemma}\label{lem suspension}
For all open intervals $I\subset {]0, 2\pi[}$, and $j=0,1$, we have
\[
s_I[D_I \otimes 1 + 1\otimes D^{\cE_j}_{f_j\psi}] = \ind_G^{p^j}(D^{\cE_j}_{f_j\psi}) \quad \in KK_0(C_0(G/K)\rtimes G, \C).
\]
\end{lemma}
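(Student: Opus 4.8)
The plan is to reduce the statement to a standard computation of the suspension isomorphism applied to a product-type Dirac operator on an interval. First I would recall the identification of $S_I$ (the suspension isomorphism) as a Kasparov product with the Bott/Dirac class of the interval $I$: concretely, $s_I$ is the inverse of the external product with $[D_I]\in KK_1(\C, C_0(I))$ (equivalently, up to sign, taking the Kasparov product with the generator of $KK_1(C_0(I),\C)$ represented by $(L^2(\cE_I), D_I/\sqrt{D_I^2+1}, \pi_I)$). So it suffices to show that the cycle $(L^2(\cE_I\boxtimes\cE_j), (D_I\otimes 1 + 1\otimes D^{\cE_j}_{f_j\psi})/\sqrt{\cdots}, \pi_I\otimes\pi^{p_j}_{G,G/K})$ represents the external Kasparov product of $[D_I]\in KK_1(C_0(I),\C)$ with $\ind_G^{p_j}(D^{\cE_j}_{f_j\psi})\in KK_0(C_0(G/K)\rtimes G,\C)$. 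Applying $s_I$ then strips off the $[D_I]$ factor and returns exactly $\ind_G^{p_j}(D^{\cE_j}_{f_j\psi})$.

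The key steps, in order, are: (1) observe that on $I_j\times M$ the operator $\tilde D_{\tilde f\tilde\psi}$ restricts to $D_{I_j}\otimes 1 + 1\otimes D^{\cE_j}_{f_j\psi}$, since the pullbacks $\tilde f,\tilde\psi$ are constant in the $S^1$-direction on $I_j$ and the Clifford action and connection split as a product there (Lemma \ref{lem D tilde}); so the cycle in the statement is literally the restriction along $r_{I_j}$ of the cycle \eqref{eq tilde D KK}, and in particular it is a genuine $KK$-cycle by Lemma \ref{lem tilde D KK}. (2) Invoke the standard fact (see e.g.\ the treatment of products of Dirac operators in \cite{higson00}, and Example 18.3.2 / the external product in \cite{Blackadar}) that for a Dirac-type operator $D_I$ on the interval and a self-adjoint operator $A$ on another Hilbert module, the bounded transform of $D_I\otimes 1 + 1\otimes A$ represents the external Kasparov product $[D_I]\otimes [A]$; the product $C^*$-algebra $C_0(I)\otimes(C_0(G/K)\rtimes G)$ acts diagonally and the two pieces commute, which is exactly the hypothesis needed for the product cycle. (3) Identify $[1\otimes D^{\cE_j}_{f_j\psi}]$, as a class over $C_0(G/K)\rtimes G$ acting via $\pi^{p_j}_{G,G/K}$, with $\ind_G^{p_j}(D^{\cE_j}_{f_j\psi})$ — this is by Definition \ref{def G Fredholm} together with the fact (from Lemma \ref{lem diff op G Fred}) that $D^{\cE_j}_{f_j\psi}$ is $G$-Fredholm for $p_j$, which is guaranteed by Theorem \ref{thm def Dirac G Fred}. (4) Finally, apply $s_I$, which by definition of the suspension isomorphism cancels the external multiplication by $[D_I]$, yielding the claimed equality. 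Since $s_I$ and $r_I$ depend only on the inclusion $C_0(I)\hookrightarrow C(S^1)$ and the product decomposition is available on \emph{both} $I_0$ and $I_1$, the same argument works verbatim for $j=0$ and $j=1$.

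The main obstacle I expect is step (2): making precise that the bounded transform of $D_{I}\otimes 1 + 1\otimes A$ (rather than some more symmetric "correctly graded sum") represents the external product, including getting the grading conventions and the odd-degree bookkeeping right — $[D_I]$ lives in $KK_1$, $\ind_G^{p_j}(D^{\cE_j}_{f_j\psi})$ in $KK_0$, and the total cycle in $KK_1$, so one must check that the $\Z_2$-grading on $\cE_{I}\boxtimes\cE_j$ and the anticommutation of $c(e_1)\frac{d}{d\alpha}$ with the grading operator of $\cE_j$ combine correctly. A clean way to handle this is to note that $\C_1$-graded (i.e.\ odd) $KK$-theory of $C_0(I)$ is generated by $[D_I]$ with $D_I$ an essentially self-adjoint operator on $L^2(\cE_I)$, and then cite the associativity and external-product formulas from \cite{Blackadar} directly; alternatively one can follow the concrete model in Section 11.2 and Chapter 10 of \cite{higson00}, which is the blueprint the authors already announced they are using. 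The analytic input — that all the relevant operators $(D_I\otimes1+1\otimes A)^2+1$ have locally compact resolvent after multiplying by elements of the dense subalgebra — is exactly what Lemma \ref{lem tilde D KK} provides, so no new estimates are needed here.
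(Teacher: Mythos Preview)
Your proposal is correct and follows essentially the same approach as the paper: factor the class $[D_I\otimes 1 + 1\otimes D^{\cE_j}_{f_j\psi}]$ as the external product $[D_I]\otimes \ind_G^{p_j}(D^{\cE_j}_{f_j\psi})$ (your step (2)--(3), handled in the paper by citing Lemmas 9.5.7--9.5.8, Exercise 10.9.7 and Proposition 11.2.5 of \cite{higson00}), and then use $s_I[D_I]=1$ to strip off the interval factor (your step (4)). Your step (1) and the appeal to Lemma \ref{lem tilde D KK} are not needed for the lemma as stated, since the $KK$-cycle property on $I\times M$ for arbitrary $I$ follows directly from the external product once $D^{\cE_j}_{f_j\psi}$ is known to be $G$-Fredholm for $p_j$; but this is harmless elaboration rather than a gap.
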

\begin{proof}
As in Lemmas 9.5.7 and 9.5.8, Exercise 10.9.7 and Proposition 11.2.5 in \cite{higson00}, we have
\[
\begin{split}
s_I[D_I \otimes 1 + 1\otimes D^{\cE_j}_{f_j\psi}] &= s_I[D_I] \otimes \ind_G^{p^j}(D^{\cE_j}_{f_j\psi}) \\
&=\ind_G^{p^j}(D^{\cE_j}_{f_j\psi}). 
\end{split}
\]
\end{proof}

\noindent \emph{Proof of Proposition \ref{prop indep}.}
Since $S^1$ is compact, we can choose $f$ so that $\tilde f$ is $\tilde \rho$-admissible. Furthermore, we can choose $f$ so that in addition, it is $\max(\rho_0, \rho_1)$-admissible. Then by Lemma \ref{lem indep f}, we have for $j=0,1$,
\beq{eq fj f}
\ind_G^{p_j}(D^{\cE_j}_{f_j\psi}) = \ind_G^{p_j}(D^{\cE_j}_{f\psi}).
\eeq

Set $I:= {]0, 2\pi[}$. Consider the class
\[
r_I [\tilde D_{\tilde f\tilde \psi}] \in KK_1(C_0(I) \otimes C_0(G/K)\rtimes G, \C).
\]
The inclusion maps $I_j \hookrightarrow I$ are homotopy equivalences. Therefore, after identifying $I_j \cong I$, we have
\[
\begin{split}
\bigl[D_I \otimes 1 + 1\otimes D^{\cE_0}_{f\psi} \bigr] &= r_{I_0}  [\tilde D_{\tilde f\tilde \psi}] \\
&= r_I [\tilde D_{\tilde f\tilde \psi}] \\
&= r_{I_1} [\tilde D_{\tilde f\tilde \psi}] \\
&=\bigl[D_I \otimes 1 + 1\otimes D^{\cE_1}_{f\psi}\bigr] \\
&\qquad \in KK_1(C_0(I)\otimes C_0(G/K)\rtimes G, \C). 
\end{split}
\]
Hence, by Lemma \ref{lem suspension}, 
\[
\ind_G^{p_0}(D^{\cE_0}_{f\psi})  = \ind_G^{p_1}(D^{\cE_1}_{f\psi}). 
\]
Combined with \eqref{eq fj f}, this implies that
\[
\ind_G^{p_0}(D^{\cE_0}_{f_0\psi}) = \ind_G^{p_1}(D^{\cE_1}_{f_1\psi}). 
\]
\hfill $\square$

\begin{small}

\bibliographystyle{plain}
\bibliography{mybib}

\begin{thebibliography}{10}

\bibitem{Abels}
Herbert Abels.
\newblock Parallelizability of proper actions, global {$K$}-slices and maximal
  compact subgroups.
\newblock {\em Math. Ann.}, 212:1--19, 1974/75.

\bibitem{Anghel93}
Nicolae Anghel.
\newblock On the index of {C}allias-type operators.
\newblock {\em Geom. Funct. Anal.}, 3(5):431--438, 1993.

\bibitem{Atiyah74}
Michael Atiyah.
\newblock {\em Elliptic operators and compact groups}.
\newblock Lecture Notes in Mathematics, Vol. 401. Springer-Verlag, Berlin,
  1974.

\bibitem{Atiyah77}
Michael Atiyah and Wilfried Schmid.
\newblock A geometric construction of the discrete series for semisimple {L}ie
  groups.
\newblock {\em Invent. Math.}, 42:1--62, 1977.

\bibitem{Baaj83}
Saad Baaj and Pierre Julg.
\newblock Th\'eorie bivariante de {K}asparov et op\'erateurs non born\'es dans
  les {$C^{\ast} $}-modules hilbertiens.
\newblock {\em C. R. Acad. Sci. Paris S\'er. I Math.}, 296(21):875--878, 1983.

\bibitem{Connes94}
Paul Baum, Alain Connes, and Nigel Higson.
\newblock Classifying space for proper actions and {$K$}-theory of group
  {$C^\ast$}-algebras.
\newblock In {\em {$C^\ast$}-algebras: 1943--1993 ({S}an {A}ntonio, {TX},
  1993)}, volume 167 of {\em Contemp. Math.}, pages 240--291. American
  Mathematical Society, Providence, RI, 1994.

\bibitem{Blackadar}
Bruce Blackadar.
\newblock {\em {$K$}-theory for operator algebras}, volume~5 of {\em
  Mathematical Sciences Research Institute Publications}.
\newblock Cambridge University Press, Cambridge, second edition, 1998.

\bibitem{Booss93}
Bernhelm Boo{\ss}-Bavnbek and Krzysztof~P. Wojciechowski.
\newblock {\em Elliptic boundary problems for {D}irac operators}.
\newblock Mathematics: Theory \& Applications. Birkh\"auser Boston, Inc.,
  Boston, MA, 1993.

\bibitem{Braverman02}
Maxim Braverman.
\newblock Index theorem for equivariant {D}irac operators on noncompact
  manifolds.
\newblock {\em $K$-Theory}, 27(1):61--101, 2002.

\bibitem{Braverman14}
Maxim Braverman.
\newblock The index theory on non-compact manifolds with proper group action.
\newblock {\em J. Geom. Phys.}, 98:275--284, 2015.

\bibitem{Bruening90}
Jochen Br{\"u}ning.
\newblock {$L^2$}-index theorems on certain complete manifolds.
\newblock {\em J. Differential Geom.}, 32(2):491--532, 1990.

\bibitem{Bruening92a}
Jochen Br{\"u}ning.
\newblock On {$L^2$}-index theorems for complete manifolds of rank-one-type.
\newblock {\em Duke Math. J.}, 66(2):257--309, 1992.

\bibitem{Bruening92b}
Jochen Br{\"u}ning and Henri Moscovici.
\newblock {$L^2$}-index for certain {D}irac-{S}chr\"odinger operators.
\newblock {\em Duke Math. J.}, 66(2):311--336, 1992.

\bibitem{Bunke95}
Ulrich Bunke.
\newblock A {$K$}-theoretic relative index theorem and {C}allias-type {D}irac
  operators.
\newblock {\em Math. Ann.}, 303(2):241--279, 1995.

\bibitem{Callias78}
Constantine Callias.
\newblock Axial anomalies and index theorems on open spaces.
\newblock {\em Comm. Math. Phys.}, 62(3):213--234, 1978.

\bibitem{Connes82}
Alain Connes and Henri Moscovici.
\newblock The {$L^{2}$}-index theorem for homogeneous spaces of {L}ie groups.
\newblock {\em Ann. of Math. (2)}, 115(2):291--330, 1982.

\bibitem{Elliott96}
George~A. Elliott, Toshikazu Natsume, and Ryszard Nest.
\newblock The {A}tiyah-{S}inger index theorem as passage to the classical limit
  in quantum mechanics.
\newblock {\em Comm. Math. Phys.}, 182(3):505--533, 1996.

\bibitem{Green78}
Philip Green.
\newblock The local structure of twisted covariance algebras.
\newblock {\em Acta Math.}, 140(3-4):191--250, 1978.

\bibitem{Gromov83}
Mikhael Gromov and H.~Blaine Lawson, Jr.
\newblock Positive scalar curvature and the {D}irac operator on complete
  {R}iemannian manifolds.
\newblock {\em Inst. Hautes \'Etudes Sci. Publ. Math.}, (58):83--196 (1984),
  1983.

\bibitem{Higson93}
Nigel Higson and John Roe.
\newblock On the coarse {B}aum--{C}onnes conjecture.
\newblock In {\em Novikov conjectures, index theorems and rigidity, {V}ol.\ 2
  ({O}berwolfach, 1993)}, volume 227 of {\em London Math. Soc. Lecture Note
  Ser.}, pages 227--254. Cambridge Univ. Press, Cambridge, 1995.

\bibitem{higson00}
Nigel Higson and John Roe.
\newblock {\em Analytic {$K$}-homology}.
\newblock Oxford Mathematical Monographs. Oxford University Press, Oxford,
  2000.
\newblock Oxford Science Publications.

\bibitem{Mathai13}
Peter Hochs and Varghese Mathai.
\newblock Geometric quantization and families of inner products.
\newblock {\em Adv. Math.}, 282:362--426, 2015.

\bibitem{HochsSong16b}
Peter Hochs and Yanli Song.
\newblock An equivariant index for proper actions {II}: properties and
  applications.
\newblock ArXiv:1602.02836, 2016.

\bibitem{HochsSongDS}
Peter Hochs and Yanli Song.
\newblock An equivariant index for proper actions {III}: the invariant and
  discrete series indices.
\newblock {\em Differential Geom. Appl.}, 49:1--22, 2016.

\bibitem{HochsSong15}
Peter Hochs and Yanli Song.
\newblock Equivariant indices of {S}pin$^c$-{D}irac operators for proper moment
  maps.
\newblock {\em Duke Math. J.}, to appear, 2016.
\newblock ArXiv:1503.00801.

\bibitem{Kasparov83}
Gennadi Kasparov.
\newblock Index of invariant elliptic operators, {$K$}-theory and
  representations of {L}ie groups.
\newblock {\em Dokl. Akad. Nauk SSSR}, 268(3):533--537, 1983.

\bibitem{Kasparov14}
Gennadi Kasparov.
\newblock Elliptic and transversally elliptic index theory from the viewpoint
  of {$KK$}-theory.
\newblock {\em J. Noncommut. Geom.}, to appear, 2016.
\newblock Preprint version November 2013.

\bibitem{Kucerovsky01}
Dan Kucerovsky.
\newblock A short proof of an index theorem.
\newblock {\em Proc. Amer. Math. Soc.}, 129(12):3729--3736, 2001.

\bibitem{Zhang14}
Xiaonan Ma and Weiping Zhang.
\newblock Geometric quantization for proper moment maps: the {V}ergne
  conjecture.
\newblock {\em Acta Math.}, 212(1):11--57, 2014.

\bibitem{Mathai10}
Varghese Mathai and Weiping Zhang.
\newblock Geometric quantization for proper actions.
\newblock {\em Adv. Math.}, 225(3):1224--1247, 2010.
\newblock With an appendix by Ulrich Bunke.

\bibitem{Paradan11}
Paul-{\'E}mile Paradan.
\newblock Formal geometric quantization {II}.
\newblock {\em Pacific J. Math.}, 253(1):169--211, 2011.

\bibitem{Pflaum15}
Markus Pflaum, Hessel Posthuma, and Xiang Tang.
\newblock The transverse index theorem for proper cocompact actions of {L}ie
  groupoids.
\newblock {\em J. Differential Geom.}, 99(3):443--472, 2015.

\bibitem{Rieffel82}
Marc Rieffel.
\newblock Applications of strong {M}orita equivalence to transformation group
  {$C^{\ast} $}-algebras.
\newblock In {\em Operator algebras and applications, {P}art {I} ({K}ingston,
  {O}nt., 1980)}, volume~38 of {\em Proc. Sympos. Pure Math.}, pages 299--310.
  American Mathematical Society, Providence, RI, 1982.

\bibitem{Zhang98}
Youliang Tian and Weiping Zhang.
\newblock An analytic proof of the geometric quantization conjecture of
  {G}uillemin-{S}ternberg.
\newblock {\em Invent. Math.}, 132(2):229--259, 1998.

\bibitem{Wang14}
Hang Wang.
\newblock {$L^2$}-index formula for proper cocompact group actions.
\newblock {\em J. Noncommut. Geom.}, 8(2):393--432, 2014.

\bibitem{Williams07}
Dana Williams.
\newblock {\em Crossed products of {$C{^\ast}$}-algebras}, volume 134 of {\em
  Mathematical Surveys and Monographs}.
\newblock American Mathematical Society, Providence, RI, 2007.

\end{thebibliography}

\end{small}

\end{document}